\newcommand{\supp}{\text {\rm supp}}
\newcommand{\Ad}{\text {\rm Ad}}
\def\i{^{-1}}
\def\ge{\geqslant}
\def\le{\leqslant}
\def\<{\langle}
\def\>{\rangle}
\def\lto{\hookrightarrow}
\def\a{\alpha}
\def\b{\beta}
\def\g{\gamma}
\def\G{\Gamma}
\def\d{\delta}
\def\e{\epsilon}
\def\o{\omega}
\def\s{\sigma}
\def\t{\tau}
\def\th{\theta}
\def\l{\lambda}
\def\z{\zeta}
\def\ZZ{\mathbb Z}
\def\NN{\mathbb N}
\def\QQ{\mathbb Q}
\def\FF{\mathbb F}
\def\RR{\mathbb R}
\def\kk{\mathbf k}
\def\aff{\text aff}
\def\tch{\tilde \ch}
\def\ch{\mathcal H}
\def\cp{\mathcal P}
\def\ts{{\tilde s}}
\def\tW{\tilde W}
\def\tw{{\tilde w}}
\def\tch{\tilde \ch}
\def\fR{\mathfrak R}
\def\fH{\mathfrak H}
\def\ux{\underline{x}}
\def\subset{\subseteq}
\theoremstyle{plain}
\newtheorem{thm}{Theorem}[section]
\newtheorem*{thm*}{Theorem}
 \newtheorem{prop}[thm]{Proposition}
 \newtheorem{lem}[thm]{Lemma}
 \newtheorem{cor}[thm]{Corollary}
\theoremstyle{definition}
\theoremstyle{remark}
\newtheorem*{rmk}{Remark}
\newtheorem*{claim*}{Claim}
\begin{document}

\author{Xuhua He}
\address{Department of Mathematics, University of Maryland, College Park, MD 20742, USA}
\email{xuhuahe@math.umd.edu}
\thanks{X. H. was partially supported by NSF DMS-1463852.}
\author{Sian Nie}
\address{Institute of Mathematics, Academy of Mathematics and Systems Science, Chinese Academy of Sciences, 100190, Beijing, China}
\email{niesian@amss.ac.cn}
\title[Pro-$p$ Hecke algebras]{Cocenters and representations of pro-$p$ Hecke algebras}
\keywords{affine Coxeter groups, Hecke algebras, $p$-adic groups}
\subjclass[2010]{20C08, 20C20, 22E50}

\begin{abstract}
In this paper, we study the relation between the cocenter $\overline{\tch}$ and the representations of an affine pro-$p$ Hecke algebra $\tch=\tch(0, -)$. As a consequence, we obtain a new criterion on the supersingular representation: a (virtual) representation of $\tch$ is supersingular if and only if its character vanishes on the non-supersingular part of the cocenter $\overline{\tch}$.
\end{abstract}

\maketitle

\section*{Introduction}

\subsection{} Let $G$ be a $p$-adic group and $\tW$ be its Iwahori-Weyl group. The Iwahori-Hecke algebra $\tch_q$ is a deformation of the group algebra of $\tW$. It plays an important role in the study of the ordinary representations of $G$.

For representations of $G$ in characteristic $p$ (the defining characteristic), one  expects that there is a close relation between the mod-$p$ representations of $G$ and of the pro-$p$ Hecke algebra $\tch$ of $G$. The pro-$p$ Hecke algebra is a deformation of the group algebra $\tW(1)$, with parameter $q=0$. Here $\tW(1)$ is the pro-$p$ Iwahori-Weyl group, an extension of $\tW$ by a finite torus.

\subsection{}
For a pro-$p$ Hecke algebra of a $p$-adic group (i.e. the associated group $\tW(1)$ is the pro-$p$ Iwahori-Weyl group of a $p$-adic group), the representations are studied by Abe \cite{A} and Vign\'eras \cite{V14}, based on the Bernstein presentation and Satake-type isomorphism.

In this paper, we study the representations via a different approach, the ``cocenter program''.

Let us first provide some background on the ``cocenter program''.

For a group algebra of a finite group, the cocenter is very simple. The elements in the same conjugacy class of the group have the same image in the cocenter of the group algebra and the cocenter has a standard basis given by the conjugacy classes. There is a perfect pairing between the cocenter of the group algebra and the Grothendieck group of finite dimensional (complex) representations, via the trace map. This is a ``toy model'' for the ``cocenter program''.

For finite or affine Hecke algebras (with nonzero parameters), the cocenter is more complicated. The elements in the same conjugacy class may not have the same image in the cocenter. However, based on some remarkable properties on the minimal length elements in the Weyl group, one may show that the elements of minimal length in a conjugacy class of the Weyl group still have the same image in the cocenter of the Hecke algebra and the cocenter is still indexed by the conjugacy classes of the group.

For finite Hecke algebras (with generic parameters), the relation between the cocenter and the representations is fairly simple. The dimension of the cocenter equals the number of irreducible representations, and the trace map gives a perfect pairing between the cocenter and representations. For affine Hecke algebras, both the dimension of the cocenter and the number of irreducible representations are infinite and the counting-number method does not simply apply. In \cite{CH}, we introduced the rigid cocenter and rigid quotient of Grothendieck group of representations, and shows that they form a perfect pairing under the trace map, and the whole cocenter and all the finite dimensional representations can be understood via the rigid part of the parabolic subalgebras.

\subsection{} For finite and affine pro-$p$ Hecke algebra (with parameter $q=0$), the trace map from the cocenter to the linear functions on the Grothendieck group of finite dimensional representations, is surjective, but not injective. However, the knowledge of the structure of the cocenter still does a big help in the understanding of representations.

We first show in Theorem \ref{cocenter} that

\begin{thm}\label{1}
For finite and affine pro-$p$ Hecke algebra $\tch$, the cocenter is spanned by the image of $T_w$, where $w$ runs over the minimal length elements in its conjugacy class in $\tW(1)$.
\end{thm}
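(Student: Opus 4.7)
The plan is to prove the theorem by induction on the length $\ell(w)$ of $w \in \tilde W(1)$, showing that the image of $T_w$ in $\overline{\tch}$ lies in the span of $\{T_{w'} : w' \text{ is of minimal length in its } \tilde W(1)\text{-conjugacy class}\}$. The base case (when $w$ is already minimal in its class) is tautological; for the inductive step I would assume the statement for all elements of length strictly less than $\ell(w)$ and reduce $T_w$ modulo commutators to a combination of such elements plus lower-length debris.

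The main combinatorial input is the ``reduction principle'' for conjugacy classes in extended affine Weyl groups, originating in Geck--Pfeiffer and extended to the affine setting by He--Nie: for any $w$ not of minimal length in its class, there is a chain of length-preserving cyclic shifts $w \rightsquigarrow w_1 \rightsquigarrow \cdots \rightsquigarrow w_k$, each of the form $w_{i+1}=s w_i s$ with $\ell(s w_i s)=\ell(w_i)$ for some simple affine reflection $s$, terminating at an element $w_k$ admitting a \emph{strict} reduction $\ell(s w_k s)<\ell(w_k)$. Since the length function on $\tilde W(1)$ factors through $\tilde W$, I would lift this chain from the known statement over $\tilde W$, keeping careful track of the finite-torus component so that the moves genuinely occur inside $\tilde W(1)$.

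The second ingredient translates the combinatorics into identities in $\overline{\tch}$. A length-preserving shift $w \rightsquigarrow sws$ with $\ell(sw)>\ell(w)$ gives $T_{sw}=T_s T_w$, and the defining commutator identity $[T_s T_w]=[T_w T_s]$ in $\overline{\tch}$ combined with the quadratic relation $T_s^2 = c_s T_s$ at $q=0$ yields $[T_{sws}] \equiv [T_w]$ modulo the span of strictly shorter $T_u$. For the final strict reduction $\ell(s w_k s)<\ell(w_k)$, an analogous manipulation expresses $[T_{w_k}]$ entirely as a combination of lower-length terms, and the induction hypothesis closes the argument.

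The hard part will be the second ingredient in the $q=0$ setting. In the generic-parameter case one freely conjugates using the invertibility of $T_s$, but here $T_s$ is not invertible and the quadratic relation is asymmetric: rewriting $T_s T_w$ or $T_w T_s$ introduces correction terms weighted by the torus-valued coefficient $c_s$. I expect the principal bookkeeping to be verifying that these correction terms are supported on elements of strictly smaller length \emph{in $\tilde W(1)$} (not merely in $\tilde W$, where the torus extension could in principle create length collisions), and that the recursion does not force us to climb back up in length through repeated use of $c_s$. Once this toric accounting is in hand, the inductive reduction to minimal length elements goes through uniformly for both the finite and affine pro-$p$ cases.
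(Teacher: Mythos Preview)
Your plan is exactly the paper's: induct on $\ell(w)$, invoke the Geck--Pfeiffer/He--Nie reduction (which lifts to $\tW(1)$ because the length function factors through $\tW$), and translate the two kinds of cyclic shift into cocenter identities. The complications you anticipate, however, do not arise, and the execution is cleaner than you expect.

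For the length-preserving step $\ell(sws^{-1})=\ell(w)$, you picked the case $\ell(sw)>\ell(w)$ and plan to use the quadratic relation; this is the awkward side. Take instead (as the paper does) $\ell(sw)=\ell(w)-1$: then $T_w=T_{s^{-1}}T_{sw}\equiv T_{sw}T_{s^{-1}}=T_{sws^{-1}}$ \emph{exactly} in the cocenter, since $\ell(sw)+\ell(s^{-1})=\ell(sws^{-1})$. No quadratic relation is used and there are no lower-order correction terms. (If $\ell(sw)=\ell(w)+1$ then, unless the conjugation is already trivial in $\tW$, one has $\ell(ws^{-1})=\ell(w)-1$ and the same one-line argument applies with left and right interchanged.)

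For the strict step $\ell(sws^{-1})=\ell(w)-2$, writing $T_w=T_{s^{-1}}T_{sws}T_{s^{-1}}\equiv T_{s^{-1}}^2T_{sws}=c_{s^{-1}}T_{ws}$ gives a \emph{single} term of length $\ell(w)-1$. Your toric worry dissolves: $c_{s^{-1}}\in\kk[Z]$ and every $z\in Z$ has $\ell(z)=0$, so $c_{s^{-1}}T_{ws}$ is a $\kk$-combination of $T_{wsz}$ with $\ell(wsz)=\ell(ws)=\ell(w)-1$. The recursion strictly decreases length at each pass and cannot climb back up.
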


For finite pro-$p$ Hecke algebras, the irreducible representations are just characters. For affine pro-$p$ Hecke algebras, it is easy to construct a family $\{\pi_{J, \G, \Xi, V}\}$ of representations by taking the parabolic induction from characters of the parabolic algebras of $\tch$. See \S\ref{aleph} for the precise definition.

One of the main results in this paper is the following (see Theorem \ref{main-7}).

\begin{thm}
Let $\tch$ be an affine pro-$p$ Hecke algebra. The set $\{\pi_{J, \G, \Xi, V}\}$ is a basis of the Grothendieck group $R(\tch)$.
\end{thm}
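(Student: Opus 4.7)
The plan is to exploit the trace pairing between the cocenter $\overline{\tch}$ and the Grothendieck group $R(\tch)$, with Theorem~\ref{1} supplying an explicit spanning set of $\overline{\tch}$ indexed by conjugacy classes of $\tW(1)$. As recalled in the excerpt, the trace map $\overline{\tch}\to R(\tch)^\ast$ is surjective but not injective, so dually the character map embeds $R(\tch)$ into $\overline{\tch}^\ast$. It therefore suffices to analyze the characters $\chi_{\pi_{J,\G,\Xi,V}}$ as functionals on $\overline{\tch}$ and show both that they are linearly independent and that their span has the correct size.

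For linear independence I would set up a block-triangular pairing. Conjugacy classes $[w]$ in $\tW(1)$ should be stratified by a ``type'' $J$, read off from the support of a minimal-length representative together with an analogue of a Newton point; the additional data match $(\G,\Xi)$, and $V$ then records the character on the finite-torus piece sitting inside $\tW_J(1)$. For each tuple one computes $\mathrm{tr}(\pi_{J,\G,\Xi,V},T_{w_{\min}})$ by a Frobenius-type formula for parabolic induction at parameter $q=0$, combined with Theorem~\ref{1} applied inside the parabolic Hecke subalgebra of type $J$. The expected outcome is that the diagonal entry at $(J,\G,\Xi,V)$ is a nonzero scalar determined by the value of $V$, while any off-diagonal entry vanishes whenever the type of $[w]$ is strictly smaller than $J$, so the resulting matrix is block upper-triangular with invertible diagonal.

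Once triangularity is in hand, both halves of the theorem follow together. The tuples $(J,\G,\Xi,V)$ are in bijection with the strata labeling a basis of the quotient of $\overline{\tch}$ by the kernel of the trace map, which bounds the rank of $R(\tch)$ from above by the number of tuples; the triangular pairing supplies the opposite inequality, and the $\pi_{J,\G,\Xi,V}$ are forced to be a $\ZZ$-basis of $R(\tch)$. The main obstacle will be the trace computation itself: the induced modules $\pi_{J,\G,\Xi,V}$ are most naturally described in the Bernstein presentation, whereas Theorem~\ref{1} is stated in the Iwahori--Matsumoto basis, and at $q=0$ the translation between the two presentations is delicate; in particular one has to disentangle the Clifford-theoretic contribution of the finite-torus character $V$ from the Bernstein part in order to recover the triangular pattern on the nose.
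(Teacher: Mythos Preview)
Your broad strategy---pair the cocenter against $R(\tch)$ via the trace map, stratify by a ``type'' $(J,\G)$ read off from a minimal-length representative, and aim for block-triangularity---is indeed the skeleton of the paper's argument, and the obstacle you flag (passing between the Iwahori--Matsumoto basis and the parabolic/Bernstein description at $q=0$) is real; the paper resolves it through Proposition~\ref{power} and Lemma~\ref{p-trace}, which replace $T_{wy}$ by $(T^J_{hwyh^{-1}})^n$ for $n\gg 0$ inside the parabolic subalgebra.

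There are, however, two genuine gaps in your plan. First, your spanning argument is a dimension count: you want a bijection between the tuples $(J,\G,\Xi,V)$ and a basis of $\overline{\tch}/\ker(Tr)$, and then compare ranks. This does not work. Both $R(\tch)_\kk$ and the cocenter are infinite-dimensional (the group $\Omega_J$ is infinite, so already $V$ runs over infinitely many classes), and Theorem~\ref{cocenter} gives only a \emph{spanning set} of $\overline{\tch}$, not a basis; there is no finite count to match. The paper's spanning argument is entirely different: for an arbitrary $M$ one takes the minimal $(J,\G)$ in $\aleph^*(M)$, computes the ``restriction'' $M_{J,\G}$, writes it as $\sum a_{(\Xi,V)} I(\Xi,V)$, subtracts the corresponding combination $U(J,\G)=\sum a_{(\Xi,V)}\pi_{J,\G,\Xi,V}$, and shows via Theorem~\ref{char-1} that $M-U(J,\G)$ has strictly larger minimal pair. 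This is inductive subtraction, not counting.

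Second, your claim that ``the diagonal entry at $(J,\G,\Xi,V)$ is a nonzero scalar'' is not justified and in fact need not hold for a \emph{single} standard representative: by Theorem~\ref{char-1} the diagonal entry is $\sum_{\g}{}^\g\Xi(T_{hwh^{-1}})\,Tr(hyh^{-1},{}^\g V)$, and nothing prevents this particular sum from vanishing. The paper does not rely on any one diagonal entry being nonzero. For linear independence it instead lets $u$ range over all of $W_\G(1)$ with $\supp^J(u)=\G$ and $x$ over $\Omega_J(\G)(1)$, uses $T_{ux\mu^n}$ with a central $\mu$ and $n\gg0$ (so that Lemma~\ref{p-trace} applies), invokes a Vandermonde argument (Lemma~\ref{central}) to strip off the $\mu^n$, and then appeals to the finite-algebra separation result Proposition~\ref{seperate} to force all coefficients $a_{J_1,\G_1,\Xi,V}$ to vanish. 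You will need this entire package, not a single nonzero diagonal entry.
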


For pro-$p$ Hecke algebras of $p$-adic groups, the above result is obtained by Abe \cite{A} and Vign\'eras \cite{V14}. Our strategy is different from Abe and Vign\'eras. We use the structure of the cocenter (see Theorem \ref{cocenter}) and the character formula established in \S\ref{6}.

\subsection{} Among all the representations of the affine pro-$p$ Hecke algebra $\tch$, the supersingular representations are the most important ones. Abe \cite{A}  showed that for pro-$p$ Hecke algebras of $p$-adic groups, any representation can be obtained from supersingular ones by the parabolic inductions. The classification of supersingular representations for affine pro-$p$ Hecke algebras is obtained by Vign\'eras \cite{V14}. In Theorem\ref{5.4}, we give a new proof of the classification (in the more general setting) and we also give a new criterion of supersingular representations.

\begin{thm}\label{ss}
A virtual representation $\pi$ of $\tch$ is supersingular if and only if $Tr(\overline{\tch}^{nss}, \pi)=0$, where $\overline{\tilde \ch}^{nss}$ is the non-supersingular part of the cocenter defined in \S\ref{nss}.
\end{thm}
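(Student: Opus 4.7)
The plan is to combine three ingredients established earlier in the paper: the basis theorem for $R(\tch)$ in terms of parabolically induced representations $\pi_{J,\G,\Xi,V}$ (Theorem \ref{main-7}), the description of $\overline{\tch}$ via images of $T_w$ for $w$ of minimal length in its $\tW(1)$-conjugacy class (Theorem \ref{1}), and the character formula for $\text{Tr}(T_w,\pi_{J,\G,\Xi,V})$ from \S\ref{6}. The equivalence will be reduced to a triangularity assertion on the trace pairing, restricted to the non-supersingular parts on both sides.

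For the ``only if'' direction, assume $\pi$ is supersingular, so that its expansion in the basis $\{\pi_{J,\G,\Xi,V}\}$ is concentrated on supersingular quadruples. By Theorem \ref{1} it suffices to show that $\text{Tr}(T_w,\pi)=0$ whenever $w$ is a minimal length element of a non-supersingular conjugacy class. The character formula of \S\ref{6} expresses this trace in terms of the inducing characters $\Xi$ evaluated on elements of the associated Levi, and the incompatibility between the non-supersingular shape of $w$ and the supersingular shape of the inducing data forces each such evaluation to vanish term by term.

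For the substantive ``if'' direction, suppose $\text{Tr}(\bar h,\pi)=0$ for every $\bar h\in\overline{\tch}^{nss}$. Writing
\[\pi=\sum_{(J,\G,\Xi,V)}c_{J,\G,\Xi,V}\,\pi_{J,\G,\Xi,V}\]
via Theorem \ref{main-7}, the task is to show $c_{J,\G,\Xi,V}=0$ for every non-supersingular quadruple. I would introduce a partial order on non-supersingular quadruples refining the size of $J$ together with the conjugacy data in $\G$, and to each such quadruple attach a test element $\bar h(J,\G,\Xi,V)\in\overline{\tch}^{nss}$, constructed from a minimal length element of a suitable non-supersingular $\tW(1)$-conjugacy class. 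The character formula will render the matrix $\bigl[\text{Tr}\bigl(\bar h(J,\G,\Xi,V),\pi_{J',\G',\Xi',V'}\bigr)\bigr]$ triangular with nonzero diagonal on the non-supersingular block. Inverting this matrix, the vanishing hypothesis forces every non-supersingular $c_{J,\G,\Xi,V}$ to be zero, so $\pi$ is supersingular.

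The main obstacle is the construction of the test elements $\bar h(J,\G,\Xi,V)$ and the verification of triangularity. This requires showing that the non-supersingular part of the cocenter refines combinatorially according to the parabolic data indexing $R(\tch)$, and that the character formula records this refinement via a unitriangular pattern with respect to the chosen partial order. The precise matching between non-supersingular $\tW(1)$-conjugacy classes and proper parabolic subalgebras, carried out through the minimal length element machinery of Theorem \ref{1}, is the technical heart of the argument; once set up, the inversion of the triangular pairing is formal.
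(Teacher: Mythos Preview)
Your proposal has two genuine gaps, one in each direction.

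For the ``only if'' direction you write: ``assume $\pi$ is supersingular, so that its expansion in the basis $\{\pi_{J,\G,\Xi,V}\}$ is concentrated on supersingular quadruples.'' But this is not the definition of supersingular; the definition (given just before Theorem~\ref{5.4}) is that $E_w\pi=0$ for $\ell(w)\gg 0$. The statement that a supersingular $\pi$ has its basis expansion concentrated on the quadruples $(S_0,\G,\Xi,V)$ with $\sharp W_\G,\sharp W_{\G'}<\infty$ is exactly condition (3) of Theorem~\ref{5.4}, and the equivalence $(1)\Leftrightarrow(3)$ is part of what is being proved (it is new in this generality; see the Remark following Theorem~\ref{5.4}). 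So your argument here is circular. The paper instead proves $(1)\Rightarrow(2)$ directly from the definition: for $w\in\tW(1)_{\min}$ with $\nu_w$ non-central one uses Proposition~\ref{power} to express $T_w^n$ via elements of $\tch_J^+$ that factor through $T_{x^{m_0}}=E_{x^{m_0}}$ for large dominant $x^{m_0}$, which kills $M$ by hypothesis.

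For the ``if'' direction you propose a single triangularity argument using test elements coming from minimal-length representatives and the character formula of \S\ref{6}. This handles only half of $\overline{\tch}^{nss}$. Recall that by definition $\overline{\tch}^{nss}=\overline{\tch}^{nrig}+\iota(\overline{\tch}^{nrig})$. Vanishing on $\overline{\tch}^{nrig}$ together with your triangularity (this is essentially Proposition~\ref{rig}) only forces $J=S_0$ in every nonzero coefficient, i.e.\ $M$ is rigid. But among rigid representations there are still non-supersingular ones: those $\pi_{S_0,\G,\Xi,V}$ with $\sharp W_{\G'}=\infty$, where $\G'=\{s\in S_{\aff}\smallsetminus\G;\ \Xi(c_s)\neq 0\}$. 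No test element built from $T_w$ with $w\in\tW(1)_{\min}$ and the character formula of \S\ref{6} sees $\G'$; what sees $\G'$ is the involution $\iota$, via Proposition~\ref{iota}, which identifies $\iota(\pi_{S_0,\G,\Xi,V})\cong\pi_{S_0,\G',\Xi',V}$. The paper therefore uses the second half $\iota(\overline{\tch}^{nrig})$ by applying the rigidity argument to $\iota(M)$ and then Lemma~\ref{inf-rig} (which allows $\G'$ with $W_{\G'}$ infinite) to kill those remaining coefficients. Your outline never mentions $\iota$, and without it the triangular matrix you describe cannot have the required nonzero diagonal on the block where $J=S_0$ but $\sharp W_{\G'}=\infty$.
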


\subsection{} The paper is organized as follows.

In section 1, we recall the definition of pro-$p$ Hecke algebras and trace maps. In section 2, we study the cocenters of finite and affine pro-$p$ Hecke algebras. In section 3, we discuss the cocenter and representations of finite pro-$p$ Hecke algebras. In section 4, we define the parabolic subalgebras of affine pro-$p$ Hecke algebras. In section 5, we discuss the standard representatives associated to minimal length elements and study their power. Such results is used in section 6, in which we study the character formula for affine pro-$p$ Hecke algebras. Finally, in section 7,
we give a basis of the Grothendieck group of finite dimensional modules of affine pro-$p$ Hecke algebras and gives a new criterion of supersingular representations.

\subsection*{Acknowledgement} After the paper was finished, we learned from Vign\'eras that Theorem \ref{para} is also proved in her paper \cite{Vigxx} and her preprint \cite{Vigx}. We thank her for sending us \cite{Vigx} and for many useful comments. 

\section{Preliminary}

\subsection{}\label{1.1} We start with a sextuple $(W, S, \Omega, \tW, Z, \tW(1))$, where $(W, S)$ is a Coxeter system, $\Omega$ is a group acting on $W$ and stabilizing $S$, $\tW=W \rtimes \Omega$, $Z$ is a finite commutative group, and we have a short exact sequence $$1 \to Z \to \tW(1) \xrightarrow{\pi} \tW \to 1.$$

Let $\ell$ be the length function on $W$. It extends to a length function on $\tW$ by requiring that $\ell(\t)=0$ for $\t \in \Omega$, and inflates to a length function $\ell$ on $\tW(1)$.

Since $Z$ is commutative, the conjugation action of $\tW(1)$ on $Z$ induces an action of $\tW$ on $Z$, which we denote by $\bullet$.

For any subset $D$ of $\tW$, we denote by $D(1)$ the inverse image of $D$ in $\tW(1)$.

Let $w \in W$. The support of $w$ is defined to be the set of simple reflections that appear in some (or equivalently, any) reduced expression of $w$ and is denoted by $\supp(w)$. For $w \in W$ and $\t \in \Omega$, we define $\supp(w \t)$ to be $\cup_{i \in \NN} \t^i(\supp(w))$.   For $\tw \in \tW(1)$, we define $\supp(\tw)=\supp(\pi(\tw))$.

\subsection{}\label{1.2}  Now we recall the definition of generic pro-$p$ Hecke algebra introduced by Vign\'eras in \cite{V14-1}.

Let $T=\cup_{w \in W} w S w \i \subset W$ be the set of reflections in $W$. Let $\kk$ be an algebraically closed field. We choose $(q_t, c_t) \in \kk \times \kk[Z]$ for $t \in T(1)$ such that
\begin{itemize}
\item $q_{w t w\i}=q_t$ for $w \in \tW(1)$ and $q_{t z}=q_t$ for $z \in Z$.

\item $c_{w t w \i}=w \bullet c_t$ for $w \in \tW(1)$ and $c_{t z}=c_t z$ for $z \in Z$.
\end{itemize}

Let $\tch(q, c)$ be the associative $\kk$-algebra with basis $(T_w)_{w \in \tW(1)}$ subject to the following relations \begin{gather*} T_w T_{w'}=T_{w w'}, \quad \text{ for } w, w' \in \tW(1) \text{ with } \ell(w w')=\ell(w)+\ell(w'); \\ T_s^2=q_s T_{s ^2}+c_s T_s, \quad \text{ for } s \in S(1). \end{gather*} We denote by $\ch(q, c)$ the subalgebra of $\tch(q, c)$ spanned by $T_w$ for $w \in W(1)$.

In this paper, we are mainly interested in the case where $q_t \equiv 0$. We simply write $\tch$ for $\tch(0, c)$ and write $\ch$ for $\ch(0, c)$. In this case, the second relation becomes $T_s^2=c_s T_s$ for $s \in S(1)$. The algebra $\tch$ plays an important role in the study of mod-$p$ representations of reductive groups over finite fields of characteristic $p$ and over $p$-adic fields.

In the case where $W$ is a finite Coxeter group, we call $\tch$ a {\it finite pro-$p$ Hecke algebra}. In the case where $W$ is an affine Weyl group, we call $\tch$ an {\it affine pro-$p$ Hecke algebra}.

\subsection{} Let $[\tch, \tch]$ be the commutator of $\tch$, the subspace of $\tch$ spanned by $[T_{w}, T_{w'}]:=T_{w} T_{w'}-T_{w'} T_{w}$ for $w, w' \in \tW(1)$. Let $\overline{\tch}=\tch/[\tch, \tch]$ be the cocenter of $\tch$. Denote by $R(\tch)_\kk$ the ($\kk$-span of the) Grothendieck group of finite dimensional representations of $\tch$ over $\kk$, i.e., the $\kk$-vector space with basis given by the isomorphism classes of irreducible representations of $\tch$. Consider the trace map $$Tr: \overline{\tch} \to R(\tch)_\kk^*, \qquad h \mapsto (V \mapsto Tr(h, V)).$$

Similar maps for affine Hecke algebras with generic nonzero parameters are studied in the joint work of Ciubotaru and the first-named author \cite{CH}. It is proved in \cite{CH} that the trace map is injective and there is a ``perfect pairing'' between the rigid-cocenter and rigid-representations of $\tch_q$.


\section{Cocenter of $\tch$}

\subsection{}\label{2.1} For $w, w' \in \tW(1)$ and $s \in S(1)$, we write $w \xrightarrow{s} w'$ if $w'=s w s \i$ and $\ell(w') \le \ell(w)$.  We write $w \to w'$ if there exists a sequence $w=w_1, w_2, \cdots, w_n=w'$ of elements in $\tW(1)$ such that for any $k$, $w_{k-1} \xrightarrow{s_k} w_k$ for some $s_k \in S(1)$. We write $w \approx w'$ if $w \to \t w' \t \i$ and $\t w' \t \i \to w$ for some $\t \in \Omega(1)$. In this case, we say that $w$ and $w'$ are in the same cyclic-shift class.

\begin{lem}\label{to-min}
Let $w \in \tW(1)$ and $s \in S(1)$.

(1) If $\ell(s w s \i)=\ell(w)$, then $T_{w} \equiv T_{s w s \i} \mod [\tch, \tch]$.

(2) If $\ell(s w s \i)<\ell(w)$, then $T_{w} \equiv c_{s \i} T_{w s} \mod [\tch, \tch]$.
\end{lem}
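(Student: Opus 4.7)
The proof splits into cases according to whether $\pi(s)$ is a left descent of $\pi(w)$ in the Coxeter group $W$. The core principle is that a length-additive product $uv$ in $\tW(1)$ yields $T_u T_v = T_{uv}$ in $\tch$, while modulo $[\tch, \tch]$ one may freely swap $T_a T_b$ with $T_b T_a$.

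For part (1), first suppose $\ell(sw) = \ell(w) - 1$. Then both $w = s^{-1} \cdot (sw)$ and $sws^{-1} = (sw) \cdot s^{-1}$ are length-additive (the lengths sum to $\ell(w) = \ell(sws^{-1})$), so $T_w = T_{s^{-1}} T_{sw}$ and $T_{sws^{-1}} = T_{sw} T_{s^{-1}}$, whence
\[ T_w - T_{sws^{-1}} = [T_{s^{-1}}, T_{sw}] \in [\tch,\tch]. \]
If instead $\ell(sw) = \ell(w)+1$, I would reduce to the previous case by applying it to $w' := sws^{-1}$ with reflection $s^{-1}$, using $s^{-1} w' s = w$. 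The residual subcase (in which also $\ell(ws) > \ell(w)$, forcing $\pi(s),\pi(w)$ to commute in $W$ so that $sws^{-1}$ differs from $w$ only by a $Z$-twist) would be handled by combining the identity $T_s T_w = T_{sws^{-1}} T_s$ with the general cocenter relation $T_{zw} \equiv T_{(w\bullet z) w} \pmod{[\tch,\tch]}$ for $z \in Z$, itself a direct consequence of $[T_z, T_w] \in [\tch,\tch]$.

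For part (2), the length drop is $\ell(w) - \ell(sws^{-1}) = 2$, and one has the length-additive triple decomposition $w = s^{-1} \cdot (sws^{-1}) \cdot s$. Hence $T_w = T_{s^{-1}} T_{sws^{-1}} T_s$, and cyclicity modulo commutators gives $[T_w] = [T_s T_{s^{-1}} T_{sws^{-1}}]$. Writing $s^{-1} = s \cdot s^{-2}$ with $s^{-2} \in Z$ (length $0$), the quadratic relation computes
\[ T_s T_{s^{-1}} = T_s^2 T_{s^{-2}} = c_s T_s T_{s^{-2}} = c_s T_{s^{-1}}. \]
Since the product $s^{-1}\cdot sws^{-1} = ws^{-1}$ is length-additive of length $\ell(ws) = \ell(w)-1$, this yields $[T_w] = [c_s T_{ws^{-1}}]$. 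Finally, expanding $c_s = \sum_y \gamma_y y$, using $T_{ws^{-1}} = T_{ws} T_{s^{-2}}$, the identity $c_{s^{-1}} = c_s \cdot s^{-2}$ (from the axiom $c_{tz} = c_t z$), and the cocenter cyclicity together with commutativity of $\kk[Z]$, both $c_s T_{ws^{-1}}$ and $c_{s^{-1}} T_{ws}$ reduce to the common class $\sum_y \gamma_y [T_{y s^{-2} ws}]$, establishing $[T_w] = [c_{s^{-1}} T_{ws}]$.

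The main obstacle I anticipate is the careful bookkeeping of the non-central $\bullet$-action of $\tW(1)$ on $Z$: because $T_w T_z = T_{w\bullet z}\, T_w$ rather than $T_z T_w$, identities in $\tch$ involving $Z$-coefficients must be transported through the cocenter using the commutator relation $T_{zw} \equiv T_{(w\bullet z) w}$. This is the ingredient that makes $[c_s T_{ws^{-1}}] = [c_{s^{-1}} T_{ws}]$ in part (2) and that closes the subtle edge subcase of part (1).
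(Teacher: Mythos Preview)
Your main arguments match the paper's. For part~(1) with $\ell(sw)=\ell(w)-1$ you do exactly what the paper does, and your reduction when $\ell(sw)=\ell(w)+1$ but $\ell(ws)=\ell(w)-1$ is precisely the content of the paper's ``without loss of generality.'' For part~(2) the paper is slightly more direct: it factors $w=s^{-1}\cdot(sws)\cdot s^{-1}$ (note $sws$, not $sws^{-1}$), cycles to $T_{s^{-1}}^{2}\,T_{sws}$, and applies the quadratic relation in the form $T_{s^{-1}}^{2}=c_{s^{-1}}T_{s^{-1}}$, landing immediately on $c_{s^{-1}}T_{ws}$ and bypassing your final conversion from $c_sT_{ws^{-1}}$ to $c_{s^{-1}}T_{ws}$. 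Your route is correct, just a step longer.

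The genuine gap is your ``residual subcase'' of (1), where $\ell(sw)=\ell(ws)=\ell(w)+1$ and hence $sws^{-1}=zw$ with $z\in Z$. Your proposed tools do not close it: from $T_sT_w=T_{sws^{-1}}T_s$ and cocenter cyclicity one only obtains $T_{ws}\equiv T_{sw}=T_{zws}$, and the relation $T_{z'w'}\equiv T_{(w'\bullet z')w'}$ merely moves $z'$ within its $\langle w'\rangle$-orbit---it never replaces $z$ by $1$, so you get no control over $T_w$ versus $T_{zw}$. In fact the assertion is false in this subcase. Take $\tW(1)=S_3$ with $Z=\langle\zeta\rangle\cong\ZZ/3$, $W=\langle\pi(s)\rangle\cong\ZZ/2$, $\Omega=1$, and set $w=\zeta$: then $sws^{-1}=\zeta^{2}$ and $\ell(\zeta)=\ell(\zeta^{2})=0$, but every commutator $[T_x,T_y]$ in this six-dimensional $\tch$ lies in the span of the three length-one basis vectors $T_s,T_{s\zeta},T_{s\zeta^{2}}$, so $T_{\zeta}\not\equiv T_{\zeta^{2}}$ in $\overline{\tch}$. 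The paper's ``without loss of generality'' also silently skips this subcase; as literally stated, part~(1) needs the additional hypothesis that at least one of $\ell(sw),\ell(ws)$ equals $\ell(w)-1$.
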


\begin{proof}
(1) Without loss of generality, we may assume that $\ell(s w)=\ell(w)-1$. Then $$T_{w}=T_{s \i} T_{s w} \equiv T_{s w} T_{s \i}=T_{s w s \i} \mod [\tch,\tch].$$ Here the last equality follows from the fact that $\ell(s w)=\ell(s w s \i)-1$.

(2) We have $\ell(s w s)=\ell(w)-2$. So $$T_{w}=T_{s \i} T_{s w s} T_{s \i} \equiv T^2_{s \i} T_{s w s}=c_{s \i} T_{s \i} T_{s w s} =c_{s \i} T_{w s} \mod [\tch, \tch].$$ Here the last equality follows from the fact that $\ell(s w s)=\ell(s w)-1$.
\end{proof}

The following consequence follows easily from Lemma \ref{to-min} (1).

\begin{cor}\label{to-min2}
Let $w, w' \in \tW(1)$ with $w \approx w'$. Then $$T_{w} \equiv T_{w'} \mod [\tch, \tch].$$
\end{cor}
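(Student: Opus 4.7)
The plan is to unpack the definition of $\approx$ and reduce to two kinds of elementary moves: a length-preserving conjugation by an $s \in S(1)$, which is handled by Lemma~\ref{to-min}(1), and a conjugation by a length-zero element $\tau \in \Omega(1)$, which should be handled by a routine commutator trick.

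First, I would make the following monotonicity observation. By definition, $w \approx w'$ gives a chain
\[ w = w_0 \xrightarrow{s_1} w_1 \xrightarrow{s_2} \cdots \xrightarrow{s_n} w_n = \tau w' \tau\i \xrightarrow{s_{n+1}} \cdots \xrightarrow{s_m} w_m = w \]
for some $\tau \in \Omega(1)$. Each arrow is length-non-increasing, so $\ell(w) = \ell(w_0) \ge \ell(w_1) \ge \cdots \ge \ell(w_m) = \ell(w)$; consequently every inequality is an equality, and in particular $\ell(s_k w_{k-1} s_k\i) = \ell(w_{k-1})$ at each step. Lemma~\ref{to-min}(1) then applies at every step, and telescoping yields $T_w \equiv T_{\tau w' \tau\i} \mod [\tch,\tch]$.

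It remains to compare $T_{\tau w' \tau\i}$ and $T_{w'}$. Since $\ell(\tau) = \ell(\tau\i) = 0$, the length function is additive on the products $\tau \cdot w'$, $(\tau w') \cdot \tau\i$, and $\tau\i \cdot \tau$, so the braid relation gives
\[ T_{\tau w' \tau\i} = T_\tau T_{w'} T_{\tau\i} \quad \text{and} \quad T_{\tau\i} T_\tau = T_1 = 1. \]
Working modulo $[\tch,\tch]$ and cycling $T_\tau$ past the other two factors then gives
\[ T_\tau T_{w'} T_{\tau\i} \equiv T_{w'} T_{\tau\i} T_\tau = T_{w'}, \]
which combined with the previous step proves the corollary.

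There is no real obstacle here; the only content is the monotonicity observation that closes the cycle and forces each intermediate step to be length-preserving, so Lemma~\ref{to-min}(1) (rather than the length-decreasing case (2), which would introduce factors of $c_{s\i}$) can be applied uniformly.
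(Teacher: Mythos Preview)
Your proof is correct and follows essentially the same route as the paper: first use the cycle $w \to \tau w'\tau^{-1} \to w$ to force every $s$-conjugation step to be length-preserving so that Lemma~\ref{to-min}(1) applies throughout, then dispose of the $\Omega(1)$-conjugation by cycling $T_\tau$ around inside the commutator. The paper's version simply asserts the length-preservation ``by definition'' without spelling out the monotonicity argument you give, so your write-up is in fact slightly more explicit on that point.
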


\begin{proof}
By definition, there exists a sequence $w=w_1, w_2, \cdots, w_n=w'$ such that for any $1<k<n$, $\ell(w_k)=\ell(w_{k-1})$ and $w_k=s w_{k-1} s \i$ for some $s \in S(1)$ and $w'=\t w_{n-1} \t \i$ for some $\t \in \Omega(1)$. By Lemma \ref{to-min}(1), $T_{w} \equiv T_{w_{n-1}} \mod [\tch, \tch]$. By definition, $$T_{w'}=T_{\t} T_{w_{n-1}} T_{\t \i} \equiv T_{w_{n-1}} T_{\t \i} T_{\t}=T_{w_{n-1}} \mod [\tch, \tch].$$ The corollary is proved.
\end{proof}

\smallskip

Let $\tW(1)_{\min}$ be the set of elements in $\tW(1)$ that are of minimal length in their conjugacy classes. We have the following result.

\begin{thm}\label{min}
Assume that $W$ is a finite Coxeter group or an affine Weyl group. Then for any $w \in \tW(1)$, there exists $w' \in \tW(1)_{\min}$ such that $\tw \to \tw'$.
\end{thm}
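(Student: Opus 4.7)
The plan is to reduce the statement for $\tW(1)$ to the analogous statement for $\tW$, which is already known.

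First, I would recall the length relation $\ell(w) = \ell(\pi(w))$ for $w \in \tW(1)$, which holds by the very definition of the length in \S\ref{1.1}. As a consequence, the projection $\pi : \tW(1) \to \tW$ preserves the length, and a cyclic shift $w \xrightarrow{s} s w s^{-1}$ in $\tW(1)$ (with $s \in S(1)$) projects to the corresponding cyclic shift $\pi(w) \xrightarrow{\pi(s)} \pi(s) \pi(w) \pi(s)^{-1}$ in $\tW$, since $\ell(s w s^{-1}) \le \ell(w)$ forces $\ell(\pi(s)\pi(w)\pi(s)^{-1}) \le \ell(\pi(w))$.

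Next, I would invoke the known minimal length theorem at the level of $\tW$: for $W$ a finite Coxeter group this is the Geck--Pfeiffer theorem, and for extended affine Weyl groups it is due to He--Nie. It says that any $\bar w \in \tW$ can be taken, via a sequence of cyclic shifts by elements of $S$, to some $\bar w' \in \tW_{\min}$. Write such a sequence as $\bar w = \bar w_0 \xrightarrow{s_1} \bar w_1 \xrightarrow{s_2} \cdots \xrightarrow{s_n} \bar w_n = \bar w'$ with each $s_i \in S$.

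Then I would lift this sequence to $\tW(1)$: for each $i$, choose any $\tilde s_i \in S(1)$ with $\pi(\tilde s_i) = s_i$, and inductively set $w_0 = w$ and $w_i = \tilde s_i w_{i-1} \tilde s_i^{-1}$. By the length compatibility above, $\pi(w_i) = \bar w_i$ and $\ell(w_i) = \ell(\bar w_i) \le \ell(\bar w_{i-1}) = \ell(w_{i-1})$, so $w \to w_n$ in $\tW(1)$. It remains to check that $w_n \in \tW(1)_{\min}$. Any $\tW(1)$-conjugate $w'$ of $w_n$ projects to a $\tW$-conjugate $\pi(w')$ of $\bar w_n$, so
\begin{equation*}
\ell(w') = \ell(\pi(w')) \ge \ell(\bar w_n) = \ell(w_n),
\end{equation*}
as $\bar w_n \in \tW_{\min}$. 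Thus $w_n$ is of minimal length in its $\tW(1)$-conjugacy class, and taking $w' = w_n$ completes the proof.

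The main point where care is needed is the existence of the reduction for $\tW$ itself; everything on the $\tW(1)$ side is a direct lift, since $\pi$ preserves lengths and cyclic shifts lift freely through the choice of $\tilde s_i \in S(1)$. No subtle obstruction arises from the central $Z$-extension because the length function is inflated from $\tW$, so minimality in $\tW$ automatically forces minimality in $\tW(1)$.
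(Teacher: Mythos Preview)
Your proposal is correct and is precisely the approach taken in the paper: reduce to the known results for $\tW$ (Geck--Pfeiffer/Geck--Kim--Pfeiffer in the finite case, He--Nie in the affine case) via the fact that the length function on $\tW(1)$ is inflated from $\tW$ through $\pi$. The paper's proof is a one-sentence appeal to this reduction, whereas you have spelled out the lifting of the cyclic-shift sequence and the verification that minimality in $\tW$ implies minimality in $\tW(1)$; the content is the same.
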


\begin{proof}
Since the length function on $\tW(1)$ is induced from the length function on $\tW$ through $\pi$, the statement follows directly from \cite[Theorem 1.1]{GP} and \cite[Theorem 2.6]{GKP} (see also \cite{HN12}) if $W$ is a finite Coxeter group, and from \cite[Theorem 2.9]{HN14} if $W$ is an affine Weyl group.
\end{proof}

Now we prove the main result of this section.

\begin{thm}\label{cocenter}
Let $\tch$ be a finite or an affine pro-$p$ Hecke algebra. Then the cocenter $\overline{\tch}$ is spanned by the image of $T_{w}$ for $w \in \tW(1)_{\min}$.
\end{thm}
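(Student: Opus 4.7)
The natural approach is induction on $\ell(w)$. The base case $\ell(w)=0$ is immediate: any length-zero element of $\tW(1)$ is automatically of minimal length in its conjugacy class, so $T_w$ is already a desired spanning vector.

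For the inductive step, apply Theorem \ref{min} to obtain a chain
\[
w = w_1 \xrightarrow{s_1} w_2 \xrightarrow{s_2} \cdots \xrightarrow{s_{n-1}} w_n = w'
\]
with $w' \in \tW(1)_{\min}$, where each step is a conjugation by a simple reflection that weakly decreases length. Two cases arise. If $\ell(w_k) = \ell(w_{k-1})$ for every $k$, then $\ell(w) = \ell(w')$; since $w$ and $w'$ are conjugate and $w'$ has minimal length in the common conjugacy class, $w$ itself lies in $\tW(1)_{\min}$ and there is nothing to prove. Otherwise, let $k$ be the smallest index with $\ell(w_k) < \ell(w_{k-1})$. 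Iterated application of Lemma \ref{to-min}(1) along the length-preserving prefix $w_1, \ldots, w_{k-1}$ yields $T_w \equiv T_{w_{k-1}} \mod [\tch,\tch]$, and then a single application of Lemma \ref{to-min}(2) at step $k$ gives $T_{w_{k-1}} \equiv c_{s_{k-1}^{-1}} T_{w_{k-1} s_{k-1}} \mod [\tch,\tch]$. Since $c_{s_{k-1}^{-1}} \in \kk[Z]$ and elements of $Z$ have length zero, the right-hand side is a $\kk$-linear combination of basis vectors $T_u$ with $\ell(u) = \ell(w_{k-1}) - 1 < \ell(w)$, to which the inductive hypothesis applies.

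All of the substance has been absorbed into the earlier results: Theorem \ref{min} supplies the length-non-increasing descent to a minimal-length representative (and is where the finite/affine assumption on $W$ enters, via \cite{GP,GKP,HN12,HN14}), while Lemma \ref{to-min} furnishes the two commutator identities. I do not anticipate any real obstacle; the only point that requires mild care is noticing that after the \emph{first} length drop the element $w_{k-1} s_{k-1}$ has length strictly smaller than $\ell(w)$, which is exactly what makes the induction close.
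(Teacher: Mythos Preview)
Your proposal is correct and is essentially the same argument as the paper's: both induct on length, invoke Theorem~\ref{min} to reach a minimal-length element, and use Lemma~\ref{to-min} to handle the length-preserving prefix and the first length drop. The only cosmetic difference is that the paper packages the length-preserving prefix as $x \approx x'$ and cites Corollary~\ref{to-min2}, whereas you iterate Lemma~\ref{to-min}(1) directly; since Theorem~\ref{min} gives a $\to$-chain (no $\Omega(1)$-conjugation needed), your unpacked version is entirely adequate.
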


\begin{proof}
Let $x \in \tW(1)$. We prove by induction that the image of $T_{x}$ in $\overline{\tch}$ is spanned by $T_{w}$ for $w \in \tW(1)_{\min}$.

If $x \in \tW(1)_{\min}$, then the statement is obvious. If $x \notin \tW(1)_{\min}$, then there exists $x' \in \tW(1)$ and $s \in S(1)$ with $x \approx x'$ and $\ell(s x' s \i)<\ell(x')=\ell(x)$. By Corollary \ref{to-min2} and Lemma \ref{to-min} (2), we have $$T_{x} \equiv T_{x'} \equiv c_{s} T_{x' s\i} \mod [\tch, \tch].$$

Note that $\ell(x' s\i)<\ell(x')=\ell(x)$. By inductive hypothesis, the image of $T_{x' s\i}$ in $\overline{\tch}$ is spanned by $T_{w}$ for $w \in \tW(1)_{\min}$. Hence the image of $T_{x}$ in $\overline{\tch}$ is spanned by $T_{w}$ for $w \in \tW(1)_{\min}$.
\end{proof}

\subsection{} Let $\text{Cyc}(\tW(1)_{\min})$ be the set of cyclic-shift classes in $\tW(1)_{\min}$. For $\Sigma \in \text{Cyc}(\tW(1)_{\min})$, we denote by $T_{\Sigma}$ the image of $T_w$ in $\overline{\tch}$ for any $w \in \Sigma$. By Corollary \ref{to-min2}, $T_{\Sigma}$ is well-defined. By Theorem \ref{cocenter}, for a finite or an affine pro-$p$ Hecke algebra, its cocenter is spanned by $(T_{\Sigma})_{\Sigma \in \text{Cyc}(\tW(1)_{\min})}$. It is interesting to see if the spanning set is in fact a basis. It is known to be true for finite $0$-Hecke algebras \cite[Theorem 4.4]{He15} and for affine $0$-Hecke algebras \cite[Theorem 0.1]{HNx}.

\section{Finite pro-$p$ Hecke algebras}

In this section, we assume that $\tch$ is a finite pro-$p$ Hecke algebra and we discuss the relation between the cocenter and representations of $\tch$.

\subsection{}\label{3-1} Recall that $\ch$ is the subalgebra of $\tch$ spanned by $T_w$ for $w \in W(1)$. It is proved by Vign\'eras \cite[Proposition 2.1 \& Proposition 2.2]{V14}, every irreducible representation of $\ch$ is a character and is of the form $\Xi_{\chi, \G}$, where $\chi$ is a character of $Z$ and $\G \subset \{s \in S; \chi(c_s) \neq 0\}$. Here the character $\Xi_{\chi, \G}$ is defined to be $$\Xi_{\chi, \G}(T_s)=\begin{cases} \chi(c_{s}), & \text{ if } s \in \G(1); \\ 0, & \text{ otherwise}. \end{cases}$$

We set $\G_\Xi=\G$ for $\Xi=\Xi_{\chi, \G}$. Let $\Omega(\Xi)(1)$ be the stabilizer of $\Xi$ in $\Omega(1)$. Let $V$ be an irreducible representation of $\Omega(\Xi)(1)$. We say the pair $(\Xi, V)$ is {\it permissible} with respect to $(W(1), \Omega(1))$ if $Z \subseteq \Omega(\Xi)(1)$ acts on $V$ via $\Xi$. Set $$I(\Xi, V)=\text{Ind}^{\tch}_{\ch \otimes_{\kk[Z]} \kk[\Omega(\Xi)(1)]} (\Xi \otimes V).$$

We say that two permissible pairs $(\Xi, V)$ and $(\Xi', V')$ are equivalent if there exists $\g \in \Omega(1)$ such that $({}^\g \Xi, {}^\g V)=(\Xi', V')$. Here ${}^\g \Xi$ (resp. ${}^\g V$) denotes the twisted module of $\ch$ (resp. $\Omega({}^\g \Xi)(1)=\g \Omega(\Xi)(1) \g \i$) by $\g$. In this case, we write $(\Xi, V) \sim (\Xi', V')$. It is proved by Vign\'eras \cite[Proposition 6.17]{V14} that every irreducible representation of $\tch$ is of the form $I(\Xi, V)$ and $I(\Xi, V) \cong I(\Xi', V')$ if and only if $(\Xi, V) \sim (\Xi', V')$.

The following formula follows easily from the definition of induced modules.

\begin{lem} \label{ftrace}
Let $(\Xi, V)$ be a permissible pair. For $w \in W(1)$ and $\t \in \Omega(1)$ we have $$Tr(T_{w \t}, I(\Xi, V))= \sum_{\g \in \Omega(1) / \Omega(\Xi)(1)} {}^\g \Xi(T_w) Tr(\t, {}^\g V).$$ Here we set $Tr(\t, {}^\g V)=0$ if $ \t \notin \Omega({}^\g\Xi)(1)$.
\end{lem}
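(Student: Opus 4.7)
The plan is to compute the trace of $T_{w\t}$ on $I(\Xi,V)$ by using the Mackey-style decomposition of the induced module into $\Omega(1)/\Omega(\Xi)(1)$-indexed summands, writing down the explicit action of $T_{w\t}$, and identifying which cosets contribute to the diagonal.

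First I would fix a set $R$ of representatives for $\Omega(1)/\Omega(\Xi)(1)$ and note that, as a $\kk$-vector space,
\[
I(\Xi,V)=\bigoplus_{\g\in R} T_\g\otimes(\Xi\otimes V),
\]
because $\tch$ is a free right module over $\ca:=\ch\otimes_{\kk[Z]}\kk[\Omega(\Xi)(1)]$ with basis $\{T_\g\}_{\g\in R}$ (use the decomposition $\tW(1)=W(1)\Omega(1)$ and $\Omega(1)=\bigsqcup_{\g\in R}\g\Omega(\Xi)(1)$, together with the fact that $\ell(\g)=0$ so all relevant braid relations $T_xT_\g=T_{x\g}$ hold automatically for any $x\in\tW(1)$).

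Next I would compute the action of $T_{w\t}$ on the $\g$-th summand. Since $w\in W(1)$, $\t,\g\in\Omega(1)$ and $\ell(w\t\g)=\ell(w)$, we have $T_{w\t}T_\g=T_{w\t\g}$. Write $\t\g=\g'\t'$ with $\g'\in R$ the unique representative of the coset $\t\g\Omega(\Xi)(1)$ and $\t'\in\Omega(\Xi)(1)$. Because $\Omega(1)$ normalizes $W(1)$, we get $w\t\g=\g'(\g'^{-1}w\g')\t'$ with $\g'^{-1}w\g'\in W(1)$, and again all lengths add, so
\[
T_{w\t}\bigl(T_\g\otimes(\xi\otimes v)\bigr)=T_{\g'}\otimes\bigl(T_{\g'^{-1}w\g'}T_{\t'}\cdot(\xi\otimes v)\bigr)=T_{\g'}\otimes\bigl({}^{\g'}\Xi(T_w)\,\xi\otimes\t' v\bigr).
\]
This lands in the $\g'$-th summand.

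The diagonal contribution therefore occurs precisely when $\g'=\g$, which happens iff $\t\g\Omega(\Xi)(1)=\g\Omega(\Xi)(1)$, i.e.\ iff $\t\in\g\Omega(\Xi)(1)\g^{-1}=\Omega({}^\g\Xi)(1)$; in that case $\t'=\g^{-1}\t\g$. On this diagonal block the action reduces to multiplication by the scalar ${}^\g\Xi(T_w)$ tensored with the action of $\g^{-1}\t\g$ on $V$, whose trace is by definition $Tr(\t,{}^\g V)$. Summing over $\g\in R$ and adopting the convention $Tr(\t,{}^\g V)=0$ when $\t\notin\Omega({}^\g\Xi)(1)$, one obtains the stated formula. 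The only mildly subtle point is the bookkeeping in step two---the compatibility of the $\ca$-module structure with the twist by $\g$---but once the decomposition into summands is set up correctly everything else is a direct calculation.
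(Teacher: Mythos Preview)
Your argument is correct and is exactly the standard computation the paper has in mind; the paper itself does not give a proof, only the remark that the formula ``follows easily from the definition of induced modules.'' Your Mackey-style decomposition $I(\Xi,V)=\bigoplus_{\g\in R}T_\g\otimes(\Xi\otimes V)$, the rewriting $T_{w\t}T_\g=T_{\g'}T_{\g'^{-1}w\g'}T_{\t'}$, and the identification of the diagonal blocks are precisely the details behind that remark.
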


\subsection{}
We denote by $\tch^{=S} \subseteq \tch$ the $\kk$-linear space generated by $T_{w \t}$, where $w \in W(1)$ with $\supp(w)=S$ and $\t \in \Omega(1)$. Denote by $R(\tch)_{=S}$ the $\kk$-linear space spanned by the simple $\tch$-modules $I(\Xi, V)$, where $(\Xi, V)$ is a permissible pair such that $\G_\Xi=S$.

By Dedekind Theorem, the trace map $Tr: A \to R(A)_\kk^*$ is surjective for any $\kk$-algebra $A$. For finite pro-$p$ Hecke algebra $\tch$, we have the following refinement.

\begin{prop} \label{seperate}
Let $\G \subseteq S$. Then the trace map $Tr: \tch^{=S} \to R(\tch)_{=S} \,^*$ is surjective.
\end{prop}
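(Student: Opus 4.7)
The plan is to establish surjectivity by proving that the family of restricted traces $\{Tr(\cdot, I(\Xi, V))|_{\tch^{=\G}}\}$, indexed by equivalence classes of permissible pairs $(\Xi, V)$ with $\G_\Xi = \G$, is linearly independent in $(\tch^{=\G})^*$; by finite-dimensional duality this is equivalent to the desired surjectivity of $Tr: \tch^{=\G} \to R(\tch)_{=\G}^*$.

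To apply Lemma \ref{ftrace} systematically, I introduce the auxiliary linear functional $f_{\Xi, V}: \tch \to \kk$ defined by $f_{\Xi, V}(T_{w\t}) := \Xi(T_w)\, Tr(\t, V)$, with the convention $Tr(\t, V) = 0$ when $\t \notin \Omega(\Xi)(1)$; permissibility makes this well-defined under the ambiguity $w\t = (wz)(z^{-1}\t)$ for $z \in Z$. Lemma \ref{ftrace} then reads $Tr(\cdot, I(\Xi, V)) = \sum_{\g \in \Omega(1)/\Omega(\Xi)(1)} f_{{}^\g\Xi, {}^\g V}$. I observe that on $\tch^{=\G}$, $f_{\Xi', V'}$ vanishes unless $\G \subseteq \G_{\Xi'}$; since $\G_{{}^\g\Xi} = \g(\G)$ has the same cardinality as $\G$, only cosets $\g\Omega(\Xi)(1)$ with $\g \in \Omega_\G(1) := \{\g \in \Omega(1) : \g(\G) = \G\}$ contribute. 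Hence
\[
Tr(\cdot, I(\Xi, V))|_{\tch^{=\G}} = \sum_{\g \in \Omega_\G(1)/\Omega(\Xi)(1)} f_{{}^\g\Xi, {}^\g V}|_{\tch^{=\G}},
\]
a sum over the $\Omega_\G(1)$-orbit of $(\Xi, V)$ inside the set of permissible pairs with $\G_\Xi = \G$. Equivalence classes of such pairs coincide with these $\Omega_\G(1)$-orbits and are disjoint, so the linear independence I want reduces to linear independence of the larger family $\{f_{\Xi, V}|_{\tch^{=\G}}\}_{(\Xi, V): \G_\Xi = \G}$ (indexed without passing to equivalence classes).

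To establish this, fix a lift $w_\G \in W(1)$ of the longest element of the finite Coxeter group $W_\G$, so that $\supp(w_\G) = \G$ and $\Xi(T_{w_\G}) \neq 0$ whenever $\G_\Xi = \G$. Given a relation $\sum c_{\Xi, V} f_{\Xi, V}|_{\tch^{=\G}} = 0$, I evaluate at the test element $\sum_{z \in Z} \chi_0(z)^{-1} T_{z w_\G \t_0} \in \tch^{=\G}$ for each character $\chi_0: Z \to \kk^\times$ and each $\t_0 \in \Omega(\Xi_0)(1)$, where $\Xi_0 := \Xi_{\chi_0, \G}$. Using $\Xi(T_{zw_\G}) = \Xi(z)\, \Xi(T_{w_\G})$ and orthogonality of $Z$-characters, only terms with $\Xi|_Z = \chi_0$ survive; this is at most one character $\Xi_0$, and the relation collapses to $\sum_V c_{\Xi_0, V}\, Tr(\t_0, V) = 0$ for all $\t_0 \in \Omega(\Xi_0)(1)$. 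Linear independence of the characters of pairwise non-isomorphic irreducible representations of the finite group $\Omega(\Xi_0)(1)$ over the algebraically closed field $\kk$ (via Jacobson density) then forces $c_{\Xi_0, V} = 0$ for every $V$; varying $\chi_0$ concludes.

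The main technical obstacle I foresee is the orbit-restriction step: verifying that $f_{{}^\g\Xi, {}^\g V}|_{\tch^{=\G}}$ vanishes for cosets $\g\Omega(\Xi)(1)$ with $\g \notin \Omega_\G(1)$, and that the surviving cosets enumerate the full $\Omega_\G(1)$-orbit without repetition. A minor prerequisite is invertibility of $|Z|$ in $\kk$ to run the character orthogonality over $Z$; this holds automatically in the intended pro-$p$ applications, where $Z$ has order prime to $p = \mathrm{char}\,\kk$.
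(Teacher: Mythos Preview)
Your approach matches the paper's: both reduce to linear independence of the trace functionals restricted to $\tch^{=S}$, expand via Lemma~\ref{ftrace}, and then separate first by the character $\Xi$ and afterwards by $V$ using linear independence of irreducible characters over an algebraically closed field. The orbit-restriction step you flag is correct and is implicitly present in the paper as well; there it is absorbed by simply rewriting the double sum over equivalence classes and over $\g\in\Omega(1)/\Omega(\Xi)(1)$ as a single sum over all permissible pairs $(\Xi',V')$ with $\G_{\Xi'}=S$.

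The one weak point is your separation over $Z$ by averaging against $\chi_0^{-1}$. When $\operatorname{char}\kk$ divides $|Z|$, the test element $\sum_{z\in Z}\chi_0(z)^{-1}T_{zw_\G\t_0}$ kills every $f_{\Xi,V}$ (the sum $\sum_z\chi_0(z)^{-1}\chi(z)$ equals $|Z|$ or $0$, and $|Z|=0$ in $\kk$), so the argument collapses; yet the paper imposes no hypothesis on $|Z|$. The repair is immediate: evaluate instead at each $T_{zw_\G\t_0}$ individually and invoke Artin's theorem on linear independence of the distinct homomorphisms $\chi:Z\to\kk^\times$, which is characteristic-free. The paper uses a close variant: it fixes $w_0$ of full support and evaluates at $T_{w_0}T_wT_\t$ for \emph{all} $w\in W(1)$, then uses linear independence of the $\Xi'$ as characters of $\ch$; the multiplication by $T_{w_0}$ is what keeps the test elements inside $\tch^{=S}$ while still letting $w$ range freely.
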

\begin{proof}
Let $M \in R(\tch)_{=S}$ such that $Tr(\tch^{=S}, M)=0$. We show that $M=0$.

Assume $M=\sum_{[(\Xi, V)]} a_{[(\Xi, V)]} I(\Xi, V)$, where $a_{[(\Xi, V)]} \in \kk$ and $[(\Xi, V)]$ ranges over the $\sim$-equivalence classes of permissible pairs with $\G_\Xi=S$.

Now we show that each coefficient $a_{[(\chi, V)]}$ vanishes. Fix $w_0 \in W(1)$ with $\supp(w_0)=S$. For any $w \in W(1)$ and $\t \in \Omega(1)$, $T_{w_0} T_w $ is a linear combination of $T_{w'}$ with $w' \in W(1)$ such that $\supp(w')=S$. So $Tr(T_{w_0} T_w T_\t, M)=0$ by assumption. Using Lemma \ref{ftrace}, we have
\begin{align*} 0=Tr(T_{w_0} T_w T_\t, M) &= \sum_{[(\Xi, V)]} \sum_{\g \in \Omega(1) / \Omega(\Xi)(1)} a_{[(\Xi, V)]} {}^\g \Xi(T_{w_0}) {}^\g \Xi(T_w) Tr(\t, {}^\g V) \\ &=\sum_{(\Xi', V')} a_{(\Xi', V')} \Xi'(T_{w_0}) Tr(\t, V') \Xi'(T_w),\end{align*} where in the last expression, $(\Xi', V')$ ranges over permissible pairs such that $\G_{\Xi'}=S$, and $a_{(\Xi', V')}=a_{[(\Xi, V)]}$ for $(\Xi', V') \in [(\Xi, V)]$.

Now we regard $\sum_{(\Xi', V')} a_{(\Xi', V')} \Xi'(T_{w_0}) Tr(\t, V') \Xi'(T_w)$ as the virtual character $\sum_{(\Xi', V')} a_{(\Xi', V')} \Xi'(T_{w_0}) Tr(\t, V') \Xi'$ evaluated at $T_w$. Since $w$ runs over all the elements in $W(1)$, we have $$\sum_{(\Xi', V')} a_{(\Xi', V')} \Xi'(T_{w_0}) Tr(\t, V') \Xi'=0.$$

Therefore, for each $\Xi'$,  we have \begin{align*}\sum_{V'; \ \text{$(\Xi', V')$ is permissible}} a_{(\Xi', V')} \Xi'(T_{w_0}) Tr(\t, V')=0. \end{align*} We regard $\sum_{V'; \ \text{$(\Xi', V')$ is permissible}} a_{(\Xi', V')} \Xi'(T_{w_0}) Tr(\t, V')$ as the virtual characters $\sum_{V'; \ \text{$(\Xi', V')$ is permissible}} a_{(\Xi', V')} \Xi'(T_{w_0}) Tr(-, V')$ evaluated at $\t$. Since $\t$ runs over all the elements in $\Omega(\Xi')(1)$, we have $a_{(\Xi', V')} \Xi'(T_{w_0})=0$ for any $V'$. In particular, $a_{(\Xi, V)} \Xi(T_{w_0})=0$. Since $\Xi(T_{w_0}) \neq 0$, we have $a_{[(\Xi, V)]}=a_{(\Xi, V)}=0$ as desried.
\end{proof}

\section{Affine pro-$p$ Hecke algebras and parabolic algebras}

\subsection{} Let $\fR=(X, R, Y, R^\vee, F_0)$ be a based root datum, where $X$ and $Y$ are free abelian groups of finite rank together with a perfect pairing $\< , \>: X \times Y \to \ZZ$, $R \subset X$ is the set of roots, $R^\vee \subset Y$ is the set of coroots and $F_0 \subset R$ is the set of simple roots. Let $\a \mapsto \a^\vee$ be the natural bijection from $R$ to $R^\vee$ such that $\<\a, \a^\vee\>=2$. For $\a \in R$, we denote by $s_\a: X \to X$ the corresponding reflections stabilizing $R$. Let $S_0=\{s_\a; \a \in F_0\}$ be the set of simple reflections of the associated finite Weyl group $W_0$. Let $R^+ \subset R$ be the set of positive roots determined by $F_0$. Let $X^+=\{\l \in X; \<\l, \a^\vee\> \ge 0, \, \forall \a \in R^+\}$. For any $v \in X_\QQ$ , we set $J_v=\{s_\a \in S_0; \<v, \a^\vee\>=0\}$. For any $J \subset S_0$, we set $X^+(J)=\{\l \in X^+; J_\l=J\}$.

\subsection{} Let $W_{\aff}=\ZZ R \rtimes W_0$ be the affine Weyl group and $S_{\aff} \supset S_0$ be the set of simple reflections in $W$. Then $(W_{\aff}, S_{\aff})$ is a Coxeter group. Let $\tW=X \rtimes W_0$ be the extended affine Weyl group. Then $W_{\aff}$ is a subgroup of $\tW$. For $\l \in X$, we denote by $\e^\l \in \tW$ the corresponding translation element.

Let $V=X \otimes_\ZZ \RR$. For $\a \in R$ and $k \in \ZZ$, set $$H_{\a, k}=\{v \in V; \<v, \a^\vee\>=k\}.$$ Let $\fH=\{H_{\a, k}; \a \in R, k \in \ZZ\}$.  Connected components of $V-\cup_{H \in \fH}H$ are called alcoves. Let $$C_0=\{v \in V; 0 < \<v, \a^\vee\> <1, \, \forall \a \in R^+\}$$ be the fundamental alcove. We may regard $W_{\aff}$ and $\tW$ as subgroups of affine transformations of $V$, where $t^\l$ acts by translation $v \mapsto v+\l$ on $V$. The actions of $W_{\aff}$ and $\tW$ on $V$ preserve the set of alcoves.

For any $\tw \in \tW$, we denote by $\ell(\tw)$ the number of hyperplanes in $\fH$ separating $C_0$ from $\tw C_0$. Then $\tW=W_{\aff} \rtimes \Omega$, where $\Omega=\{\tw \in \tW; \ell(\tw)=0\}$ is the subgroup of $\tW$ stabilizing fundamental alcove $C_0$. The conjugation action of $\Omega$ on $\tW$ preserves the set $S_{\aff}$ of simple reflections in $W_{\aff}$.



\subsection{}
Let $Z$ be a finite commutative group and $\tW(1)$ be a group containing $Z$ as a normal subgroup and $\tW(1)/Z \cong \tW$. As in \S\ref{1.2}, we may define the generic affine pro-$p$ Hecke algebra $\tch(q, c)$ and the affine pro-$p$ Hecke algebra $\tch$. The parameters $q_s$ for $s \in S(1)$ gives a multiplicative function $w \mapsto q(w)$ on $\tW(1)$  such that $q(\o)=1$ if $\o \in \Omega(1)$ and $q(s)=q_s$ if $s \in S(1)$.

Examples of such algebras include the pro-$p$ Iwahori-Hecke algebras of reductive $p$-adic groups.

By \cite[Corollary 2]{V05}, the map $T_w \mapsto {}^\iota T_w:=(-1)^{\ell(w)} q(w) T_{w\i}\i$ gives an involution $\iota$ of $\tch_q$. We still denoted by $\iota$ the induced involution of $\tch=\tch(0,c)$.



\subsection{} \label{parabolic}
For any $J \subset S_0$, we denote by $R_J$ the set of roots spanned by $J$ and set $R^\vee_J=\{a^\vee; \a \in R_J\}$. Let $\fR_J=(X, R_J, Y, R^\vee_J, J)$ be the based root datum corresponding to $J$. Let $W_J \subset W_0$ and $\tW_J=X \rtimes W_J$ be the Weyl group and the extended affine Weyl group of $\fR_J$ respectively. We say $\tw \in \tW_J$ is $J$-positive if $\tw \in  t^\l W_J $ for some $\l \in X$ such that $\<\l, \a\> \ge 0$ for $\a \in R^+ \smallsetminus R_J$. Denote by $\tW_J^+$ the set of $J$-positive elements, which is a submonoid of $\tW_J$, see \cite[Section 6]{BK} and \cite[II.4]{V98}.

We set $\fH_J=\{H_{\a, k} \in \fH; \a \in R_J, k \in \ZZ\}$ and $C_J=\{v \in V; 0 < \<v, \a^\vee\> < 1, \a \in R_J^+\}$. For any $\tw \in \tW_J$, we denote by $\ell_J(\tw)$ the number of hyperplanes in $\fH_J$ separating $C_J$ from $\tw C_J$.

Let $(W_J)_{\aff}=\ZZ R_J \rtimes W_J$ and let $J_{\aff} \supseteq J$ be the set of simple reflections of $(W_J)_{\aff}$. Then $\tW_J=(W_J)_{\aff} \rtimes \Omega_J$, where $\Omega_J=\{\tw \in \tW_J; \ell_J(\tw)=0\}$. We denote by $\le_J$ the Bruhat order on $\tW_J$. Note that $\le_J$ differs from the restriction to $\tW_J$ of the Bruhat order on $\tW$.

We denote by $\tW^J$ (resp. ${}^J \tW$) the set of minimal coset representatives in $\tW/W_J$ (resp. $W_J \setminus \tW$). For $J, K \subset S_0$, we simply write $\tW^J \cap {}^K \tW$ as ${}^K \tW^J$. We define ${}^J W_0, W_0^J$ and ${}^J W_0^K$ in a similar way.

The following result is proved in \cite[Lemma 4.1]{A}.

\begin{lem}  \label{alcove}
Let $x, y \in \tW_J(1)$. If $x \in \tW_J^+(1)$ and $y \le_J x$, then $y \in \tW_J^+(1)$.
\end{lem}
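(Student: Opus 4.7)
I would proceed by induction on $\ell_J(x) - \ell_J(y)$. The base case $\ell_J(x) = \ell_J(y)$ forces $y = x$, so one may assume $\ell_J(y) < \ell_J(x)$. Choosing an intermediate cover $y \lessdot_J y' \le_J x$ and applying the inductive hypothesis to $(x, y')$ reduces the problem to a single cover $y \lessdot_J x$, which by the exchange condition has the form $y = xt$ for a reflection $t = s_{\gamma, k}$ in $(W_J)_{\aff}$ (with $\gamma \in R_J^+$, $k \in \ZZ$) satisfying $\ell_J(y) = \ell_J(x) - 1$. Since $J$-positivity only depends on the image in $\tW_J$, the group $Z$ plays no role and I work in $\tW_J$.

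Writing $x = t^\l w$ with $\l \in X$, $w \in W_J$, and setting $\delta := w(\gamma) \in R_J$, a direct calculation gives $y = t^{\l + k\delta} w s_\gamma$, so the task reduces to proving $\<\l + k\delta, \b^\vee\> \ge 0$ for every $\b \in R^+ \smallsetminus R_J$. I would first convert the length-decrease hypothesis into a numerical inequality via the alcove picture. The wall attached to $t$, viewed from $x$, is $xH_{\gamma, k} = H_{\delta, k + \<\l, \delta^\vee\>}$; the length condition is equivalent to this hyperplane separating $C_J$ from $xC_J$. Comparing $\<C_J, \delta^\vee\>$ (which lies in $(0,1)$ or $(-1,0)$ according to the sign of $\delta$) with $\<xC_J, \delta^\vee\> \subset (\<\l, \delta^\vee\>, \<\l, \delta^\vee\>+1)$ (the interval has width $1$ because $w^{-1}(\delta) = \gamma \in R_J^+$ gives $\<C_J, \gamma^\vee\> \subset (0,1)$) yields a tight two-sided bound on $k$ in terms of $\<\l, \delta^\vee\>$: in one subcase $1 \le k \le -\<\l, \delta^\vee\>$ with $\<\l, \delta^\vee\>$ sufficiently negative, and in the symmetric subcase the reverse inequality with $k \le 0$.

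For the root-theoretic step I would split cases on the sign of $k\<\delta, \b^\vee\>$. If it is nonnegative, then $\<\l + k\delta, \b^\vee\> \ge \<\l, \b^\vee\> \ge 0$ is immediate. Otherwise, since $\delta \in R_J$ while $\b$ has a nontrivial simple-root coefficient outside $J$, every root appearing in the $\delta$-string through $\b$ remains in $R^+ \smallsetminus R_J$; applying the $J$-positivity of $\l$ to the extremal root of this string produces a lower bound on $\<\l, \b^\vee\>$ in terms of $\<\l, \delta^\vee\>$, and plugging in the length-bound on $k$ closes the estimate. The main obstacle will be the bookkeeping in the non-simply-laced case, where $|\<\delta, \b^\vee\>|$ can be $2$ or $3$ and the one-line identity $(\b+\delta)^\vee = \b^\vee + \delta^\vee$ (which in the simply-laced case instantly reduces the problem to $\<\l, (\b+\delta)^\vee\> \ge 0$ combined with $k \le -\<\l, \delta^\vee\>$) must be replaced by an iterative use of the full root-string of $\b$ along $\delta$ together with the corresponding coroot relations.
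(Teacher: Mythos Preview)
The paper does not supply its own proof of this lemma; it simply records the statement and cites \cite[Lemma~4.1]{A}. So there is no argument in the paper to compare your outline against.

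Your route is sound and can be completed. The reduction to a single cover $y \lessdot_J x$, the formula $y=xt$ with $t=s_{\gamma,k}$, the computation $xH_{\gamma,k}=H_{\delta,\,k+\<\l,\delta^\vee\>}$, and the two-sided bound on $k$ extracted from ``this wall separates $C_J$ from $xC_J$'' are all correct. The only place you are making life harder than necessary is the final inequality $\<\l+k\delta,\b^\vee\>\ge 0$ for $\b\in R^+\smallsetminus R_J$: no root-string bookkeeping is needed, even outside the simply-laced case. Since $\delta\in R_J$ and $W_J$ permutes $R^+\smallsetminus R_J$, the single reflected root $s_\delta(\b)$ lies in $R^+\smallsetminus R_J$, and in \emph{every} type one has
\[
(s_\delta\b)^\vee=\b^\vee-\<\delta,\b^\vee\>\delta^\vee,
\]
so $J$-positivity of $x$ applied to $s_\delta(\b)$ gives $\<\l,\b^\vee\>-\<\delta,\b^\vee\>\<\l,\delta^\vee\>\ge 0$. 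Setting $a=\<\l,\b^\vee\>\ge 0$, $b=\<\delta,\b^\vee\>$, $c=\<\l,\delta^\vee\>$, you now know $a\ge 0$ and $a-bc\ge 0$, and want $a+kb\ge 0$. In each of the four sign-cases for $(\delta,c)$ your bound on $k$ reads $k$ between $0$ and $-c$ (possibly shifted by $\pm 1$), and a one-line estimate using whichever of $a\ge 0$ or $a-bc\ge 0$ matches the sign of $b$ finishes uniformly. The non-simply-laced obstacle you anticipate does not materialise.
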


\subsection{}
Let $\l \in X(1)$. Then $\l=\l_1 \l_2\i$ for some $\l_1, \l_2 \in X^+(1)$. We set $\th_\l=T_{\l_1} T_{\l_2}\i$. It is easy to see that $\th_\l$ does not depend on the choices of $\l_1$ and $\l_2$. For $\l, \l' \in X(1)$ we have $\th_\l \th_{\l'}=\th_{\l \l'}$. For $u \in \tW_J(1)$, there exist $u' \in  \tW_J^+(1)$ and $\l \in X^+(J)(1)$ such that $u=u' \l\i$. We set $T_u^J=T_{u'} T_\l\i$. It is easy to see that $T_u^J$ does not depend on the choices of $u'$ and $\l$.

Let $\tch_J(q, c)$ be the $\kk$-linear subspace of $\tch(q, c)$ spanned by $T_u^J$ for $u \in \tW_J(1)$. Let $\tch^+_J(q, c)$ be the $\kk$-linear subspace of $\tch(q, c)$ spanned by $T_u=T^J_u$ for $u \in \tW_J^+(1)$. We write $\tch_J$ for $\tch_J(q, c)$ specialized at $q=0$ and $\tch^+_J$ for  $\tch^+_J(q, c)$ specialized at $q=0$.

\begin{thm}\label{para}
Let $J \subset S_0$. Then

(1) The multiplication map on $\tch(q, c)$ gives $\tch_J(q, c)$ a generic pro-$p$ Hecke algebra structure.

(2) The multiplication map on $\tch=\tch(0, c)$ gives $\tch^+_J$ a pro-$p$ Hecke algebra structure.
\end{thm}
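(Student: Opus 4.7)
My plan is to establish (2) first and then obtain (1) by a formal localization at the central-like elements $T_\l$ ($\l \in X^+(J)(1)$). The technical heart is a length identity: for $w \in \tW_J^+(1)$ with translation part $\l_w \in X$, a standard alcove-counting calculation gives
\[
 \ell(w) - \ell_J(w) = \langle \l_w,\, 2\rho^J \rangle, \qquad 2\rho^J := \sum_{\a \in R^+ \smallsetminus R_J^+} \a^\vee.
\]
Since $W_J$ stabilizes $R^+ \smallsetminus R_J^+$ (the $W_J$-action preserves the $F_0 \smallsetminus J$-coefficients in a root expansion), it fixes $2\rho^J$, whence for $u,v \in \tW_J^+(1)$ one has
\[
 \ell(uv) - \ell_J(uv) = \bigl(\ell(u) - \ell_J(u)\bigr) + \bigl(\ell(v) - \ell_J(v)\bigr),
\]
so $\ell(uv) = \ell(u)+\ell(v)$ if and only if $\ell_J(uv) = \ell_J(u)+\ell_J(v)$, which transports braid-type identities from $\fR_J$ back into $\tch(q,c)$.

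With this identity in hand, closure of $\tch^+_J$ under multiplication in $\tch(q,c)$ follows by induction on $\ell_J(v)$. Writing $v$ as a product of generators $s \in J_\aff(1)$ and elements of $\Omega_J(1)$, one multiplies $T_u$ by $T_s$ one factor at a time. If $\ell_J(us) > \ell_J(u)$, the length identity gives $T_u T_s = T_{us} \in \tch^+_J$; otherwise Lemma \ref{alcove} (applied to $us <_J u$ with $u \in \tW^+_J(1)$) forces $us \in \tW_J^+(1)$, and
\[
 T_u T_s = T_{us} T_s^2 = q_s T_{us\cdot s^2} + c_s T_u,
\]
which again lies in $\tch^+_J$ (noting $s^2 \in Z(1)$, so $us\cdot s^2$ remains positive). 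The pro-$p$ Hecke presentation of $\tch^+_J$ then reads off: the braid-type identities are the content of the length identity, while the quadratic relations $T_s^2 = q_s T_{s^2} + c_s T_s$ for $s \in J(1)$ are inherited verbatim from $\tch(q,c)$. Specializing $q=0$ yields (2).

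For (1), observe that for $\l \in X^+(J)(1)$ the condition $J_\l = J$ makes $t^\l$ fixed by $W_J$, so $T_\l$ commutes with each $T_u$ ($u \in \tW_J^+(1)$) up to a unit in $\kk[Z]$, and $T_\l T_\mu = T_{\l\mu}$ by the length identity. Consequently $\{z T_\l : z \in Z,\ \l \in X^+(J)(1)\}$ is an Ore set in $\tch^+_J(q,c)$; its localization is canonically the $\kk$-span of $T_u^J = T_{u'} T_\l^{-1}$ for decompositions $u = u'\l^{-1}$, and its embedding in $\tch(q,c)$ has image exactly $\tch_J(q,c)$. The generic pro-$p$ Hecke relations for $\fR_J$ then follow from those for the positive part. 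The main obstacle is the closure step of Step 2: at every stage of the inductive multiplication one must certify that the intermediate elements $us,\ us\cdot s^2,\ldots$ remain in $\tW_J^+(1)$, which relies on repeated application of Lemma \ref{alcove}; a secondary subtlety in (1) is that because $Z$ is nontrivial, $T_\l$ is only central modulo $Z$, forcing the $Z$-enlargement of the multiplicative set to obtain a genuine two-sided Ore set.
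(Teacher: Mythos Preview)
Your proposal has a genuine gap concerning the affine simple reflections in $J_{\aff}\smallsetminus J$. You assert that ``the quadratic relations $T_s^2 = q_s T_{s^2} + c_s T_s$ for $s \in J(1)$ are inherited verbatim from $\tch(q,c)$,'' but the pro-$p$ Hecke presentation of $\tch_J$ requires these relations for all $s\in J_{\aff}(1)$, not just for $s\in J(1)$. An element $s\in J_{\aff}(1)\smallsetminus J(1)$ has the form $t^{\a}s_\a$ with $\a$ a highest root of a component of $R_J$; it is \emph{not} a simple reflection in $S_{\aff}$, it need not be $J$-positive (e.g.\ in type $A_2$ with $J=\{s_1\}$ one has $\langle\a_1,\a_2^\vee\rangle=-1<0$), and $T_s^J$ is a non-obvious element of $\tch(q,c)$. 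Verifying $(T_s^J)^2=q_sT_{s^2}+c_sT_s^J$ for such $s$ is precisely the content of Lemma~\ref{quadrac}, which is the technical heart of the paper's proof and involves a delicate computation with the Bernstein--Lusztig relations (Lemma~\ref{commutator}), including a case split according to whether $\a^\vee\in 2Y$.

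A related problem is that your length identity $\ell(w)-\ell_J(w)=\langle\l_w,2\rho^J\rangle$ is only valid for $J$-positive $w$. In the $A_2$ example above, $\ell(t^{\a_1}s_1)=3$, $\ell_J(t^{\a_1}s_1)=1$, yet $\langle\a_1,2\rho^J\rangle=0$. Hence writing $v\in\tW_J^+(1)$ as a reduced $J_{\aff}$-word $\o s_1\cdots s_n$ does \emph{not} yield $T_v=T_\o T_{s_1}\cdots T_{s_n}$ in $\tch(q,c)$, because the individual factors need not be $J$-positive. So your inductive step ``multiply $T_u$ by $T_s$ one factor at a time'' is not justified inside $\tch(q,c)$; it only makes sense inside $\tch_J$, whose algebra structure is exactly what you are trying to establish.

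The paper's argument proceeds in the opposite order: first prove the quadratic relation for all $s\in J_{\aff}(1)$ (Lemma~\ref{quadrac}) and the length-additivity lemma (Lemma~\ref{add}, which is your length identity), use these to deduce (1) by exhibiting a surjection from the abstract generic pro-$p$ Hecke algebra of $\fR_J$ onto $\tch_J(q,c)$, and finally derive (2) from (1) by an induction that uses Lemma~\ref{alcove} together with the now-available quadratic relation for $T_{s_m}^J$. Your localization idea for passing from (2) to (1) is reasonable in spirit, but it cannot substitute for the missing quadratic relations.
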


\begin{rmk}
We have a natural embedding $$\tch_J^+ \lto \tch, \qquad T_{\tw}^J \mapsto T_{\tw}.$$ Notice that this embedding does not extend to an algebra homomorphism $\tch_J \to \tch$ since $T_\l^J$ for $\l \in X^+(J)(1)$ is invertible in $\tch_J$, but not invertible in $\tch$.
\end{rmk}

If $\tch(q, c)$ is the pro-$p$ Hecke algebra of a $p$-adic group, then $\tch_J$ is the pro-$p$ Hecke algebra of the corresponding Levi subgroup. In this case, the statements are obvious. The general situation requires more work and will be proved in the rest of this section.

\smallskip

We have discuss some relations on $\th$, which essentially follows from \cite[Lemma 2.5 \& 2.7]{L}.
\begin{lem} \label{commutator}
Let $s \in S_0(1)$ and $\chi \in X(1)$. Denote by $\a_s$ the simple root corresponding to $s$.

(1) If $\<\chi, \a_s^\vee\>=0$, then $T_s \th_{\chi}=\th_{s \chi s\i} T_s$.

(2) If $\<\chi, \a_s^\vee\>=1$, then $T_{s\i}\i \th_{\chi} T_{s\i}\i=\th_{s \chi s}$.

(3) If $\<\chi, \a_s^\vee\>=2$, $\chi \in X^+(1)$ and $\a_s^\vee \in 2Y$, then
\begin{itemize}
\item[(i)] $T_{w'} T_{w''} \th_{\chi\i}=T_s \th_{s\i \chi s}$;
\item[(ii)] $T_{w'} T_{\ts} T_{w''} \th_{\chi\i}=\th_{w' \ts w'' \chi\i}$;
\item[(iii)] $T_{w'} T_{\ts\i} T_{\ts} T_{w''} \th_{\chi\i}=\th_\chi T_{s\i}\i$.
\end{itemize}
Here $w'=\l_s s \o$ and $w''=\o\i s\i \l_s\i \chi s \chi$, $\l_s \in X^+(1)$ such that $\pi(\l_s)$ is the fundamental weight corresponding to $s$, $\o \in \Omega(1) \smallsetminus Z$ and $\ts=\o\i s \o \in S_{\aff}(1)$.
\end{lem}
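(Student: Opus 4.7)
My plan is to reduce all three identities to direct applications of the defining multiplication rule $T_wT_{w'}=T_{ww'}$, valid whenever $\ell(ww')=\ell(w)+\ell(w')$. After fixing a dominant decomposition $\chi=\l_1\l_2\i$ with $\l_1,\l_2\in X^+(1)$, each displayed identity becomes a length-additivity check in $\tW(1)$ together with a consistency verification that the $Z$-lifts on both sides agree. The overall template mirrors Lusztig \cite[Lemma 2.5 \& 2.7]{L}, with bookkeeping adjustments to accommodate the cover $Z\to\tW(1)\to\tW$.

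For part (1), the assumption $\<\chi,\a_s^\vee\>=0$ combined with the dominance of the $\l_i$ forces $\<\pi(\l_1),\a_s^\vee\>=\<\pi(\l_2),\a_s^\vee\>\ge 0$, so I will verify $\ell(s\l_is\i)=\ell(\l_i)$ and $\ell(s\l_i)=\ell(\l_i)+1=\ell(s\l_is\i)+\ell(s)$. These equalities yield $T_sT_{\l_i}=T_{s\l_is\i}T_s$ on the nose, and assembling $\th_\chi=T_{\l_1}T_{\l_2}\i$ gives the identity after checking $(s\l_1s\i)(s\l_2s\i)\i=s\chi s\i$ in $\tW(1)$. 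Part (2) follows the same template: the hypothesis $\<\chi,\a_s^\vee\>=1$ permits a dominant decomposition in which $s$ can be pulled past each $\l_i$ by length-additive multiplication together with the identity $T_sT_{s\i}\i=1$ available in the generic algebra $\tch(q,c)$, and $\th_{s\chi s}$ emerges with the correct $Z$-lift.

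The main obstacle is part (3). The condition $\<\chi,\a_s^\vee\>=2$ with $\a_s^\vee\in 2Y$ is the degenerate configuration in which the two parallel affine hyperplanes $H_{\a_s,0}$ and $H_{\a_s,1}$ cannot both be crossed by a single $W_0$-simple reflection, which is precisely why the element $\ts=\o\i s\o$ produced by $\o\in\Omega(1)\smallsetminus Z$ is forced to appear. The particular elements $w'=\l_ss\o$ and $w''=\o\i s\i\l_s\i\chi s\chi$ are calibrated so that, after right-multiplication by $\th_{\chi\i}$, each of the three products in (i)--(iii) is length-additive. Establishing this requires a careful alcove-geometric analysis tracking which hyperplanes in $\fH$ are crossed by each factor, relying crucially on the dominance of $\l_s$ and the fundamental-weight property $\<\pi(\l_s),\a_t^\vee\>=\d_{s,t}$ for $s,t\in S_0$. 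Once the length identities are secured, (ii) follows at once from $T_wT_{w'}=T_{ww'}$, and (i), (iii) follow by additionally invoking the quadratic relation $T_s^2=q_sT_{s^2}+c_sT_s$ to absorb the missing or extra factors of $T_{\ts}^{\pm 1}$.
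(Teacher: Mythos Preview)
Your plan is correct and coincides with the paper's treatment: the paper gives no proof of this lemma but simply remarks that it ``essentially follows from \cite[Lemma 2.5 \& 2.7]{L}'', and Lusztig's method there is exactly the reduction to length-additive products in $\tW$ together with the braid and quadratic relations, supplemented by the $Z$-cover bookkeeping you describe. One small refinement: parts (3)(i) and (3)(ii) should come out as pure length-additivity once the hyperplane counts are done, whereas the quadratic relation (equivalently the formula for $T_{s^{-1}}^{-1}$) is genuinely required only in (3)(iii), where the four-factor product $T_{w'}T_{\ts^{-1}}T_{\ts}T_{w''}$ has total length exceeding $\ell(w'w'')=\ell(\chi s\chi)$ by~$2$.
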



\begin{lem} \label{quadrac}
For $t \in J_{\aff}(1)$, we have the quadratic relation $$(T_{t}^J)^2=c_t T_t^J + q_t T_{t^2}.$$
\end{lem}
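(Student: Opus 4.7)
The plan is to do a case analysis on $t \in J_{\aff}(1)$, according to whether $t$ projects to a finite simple reflection in $W_J$ or to the extra affine simple reflection of $(W_J)_{\aff}$ coming from a highest root of an irreducible component of $R_J$.

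\textbf{Case 1.} If $t \in J(1)$, then $t \in W_J(1) \subseteq \tW_J^+(1)$, so in the defining factorization $t = u' \l\i$ we may take $u' = t$ and $\l = 1$; hence $T_t^J = T_t$. The required identity then coincides with the defining quadratic relation for the simple reflection $t$ in the ambient algebra $\tch(q,c)$, which applies because $t \in J(1) \subseteq S_0(1) \subseteq S(1)$.

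\textbf{Case 2.} If $t \in J_{\aff}(1) \smallsetminus J(1)$, then $\pi(t) = \e^\th s_\th$ for $\th$ the highest root of some irreducible component of $R_J$. My plan is to express $T_t^J$ inside $\tch(q,c)$ in Bernstein-type form and then square. Pick $\chi \in X^+(1)$ with $\<\pi(\chi), \th^\vee\> = 2$ and sufficiently dominant outside the $R_J$ directions, so that $t\chi \in \tW_J^+(1)$ after a $J$-dominant modification if needed; this places us in the setting of Lemma \ref{commutator}(3). Then $T_t^J = T_{t\chi} T_\chi\i$ inside $\tch(q,c)$. Identifying our $t$ with the element $\ts = \o\i s_\th \o$ for a suitable $\o \in \Omega(1) \smallsetminus Z$ (which exists because the affine simple reflection associated to $\th$ is $\Omega$-conjugate to $s_\th$ after enlarging the lattice by the relevant fundamental weight), Lemma \ref{commutator}(3) rewrites $T_{t\chi}$ in a form where the simple-reflection factor $T_\ts$ appears explicitly. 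Squaring this expression and applying the ambient quadratic relation $T_\ts^2 = c_\ts T_\ts + q_\ts T_{\ts^2}$ (which holds since $\ts \in S(1)$), together with the commutation identities (1) and (2) of Lemma \ref{commutator} to reorganize the surrounding $T_{w'}, T_{w''}, \th_\chi$-factors, produces the desired identity $(T_t^J)^2 = c_t T_t^J + q_t T_{t^2}$. The parameter matching is handled by the $\bullet$-equivariance properties imposed in \S\ref{1.2}: $c_\ts = \o \bullet c_t$ pulls back to $c_t$, and $\ts^2, t^2$ both lie in $Z$ and correspond under the same conjugation.

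\textbf{Main obstacle.} The technical heart is the bookkeeping in Case 2: choosing $\chi$ so that the hypotheses of Lemma \ref{commutator}(3) are satisfied while keeping all intermediate elements in $\tW_J^+(1)$, and pinning down the $\o \in \Omega(1)$ that realizes $t$ as an $\Omega(1)$-twist of $s_\th$. One subtlety is that this requires working in an enlargement of the root datum so that the relevant fundamental (co)weight belongs to $X$; on the algebra side this is harmless since $\tch(q,c)$ depends only on the parameters, not on the enlargement. Once these identifications are in place, the identity reduces to a direct manipulation using the quadratic relation for a genuine simple reflection together with the Bernstein relations of Lemma \ref{commutator}.
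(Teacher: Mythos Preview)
Your Case 1 is fine and matches the paper. In Case 2, however, there is a genuine gap.

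The core problem is that Lemma~\ref{commutator} is stated for a \emph{simple} reflection $s \in S_0(1)$. Your element $t$ has $\pi(t)=\e^\th s_\th$ where $\th$ is the highest root of a component of $R_J$, and $s_\th$ is generally \emph{not} simple in $W_0$. You cannot feed $s_\th$ directly into Lemma~\ref{commutator}. The paper resolves this by choosing $w \in W_J(1)$ and $s \in J(1)$ with $s_\th = w s w\i$ and $\ell(s_\th)=2\ell(w)+1$, then writing $T_{\mu t}^J = \th_{w\mu\l w\i} T_{w\i}\i T_{s\i}\i T_w\i$ and repeatedly applying Lemma~\ref{commutator}(2) along the reduced word for $w$ to strip away the conjugating factors. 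This reduction to a genuine simple $s \in S_0(1)$ is the heart of the argument, and it is absent from your proposal.

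A second issue: you invoke only Lemma~\ref{commutator}(3), whose hypothesis includes $\a_s^\vee \in 2Y$. This condition fails in the generic situation (e.g.\ type $A$). The paper splits Case~2 into two sub-cases: when $\a^\vee \notin 2Y$, the fundamental weight $\l_s$ lies in $X(1)$ and the quadratic relation for $T_s$ together with Lemma~\ref{commutator}(1),(2) suffices; only when $\a^\vee \in 2Y$ is Lemma~\ref{commutator}(3) and the auxiliary affine reflection $\ts = \o\i s \o \in S_{\aff}(1)$ needed. Your proposal conflates these and never addresses the first (more common) sub-case.

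Finally, your claimed identification of $t$ with an element $\o\i s_\th \o$ for $\o \in \Omega(1)$ does not make sense: $t$ is an affine simple reflection for $(W_J)_{\aff}$, which is not generally in $S_{\aff}(1)$ and hence not $\Omega(1)$-conjugate to any $s \in S_0(1)$. The remark that enlarging the lattice is ``harmless'' is also unjustified, since $\tW(1)$ and hence $\tch(q,c)$ genuinely depend on $X$; the paper does not enlarge the lattice but rather works separately in the two sub-cases above according to whether the fundamental weight already lies in $X$.
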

\begin{proof}
If $t \in J(1)$, then $T_t^J=T_t$ and the statement is trivial. Now we assume $t \in J_{\aff}(1) \smallsetminus J(1)$. Since $Z$ is finite and $\tW_J(1)$ is finitely generated, there exists a central element $\mu$ of $X_J^+(1)$ such that $t \mu \in \tW^+(J)(1)$. Then $T_t^J=T_{t \mu} T_{\mu}\i=T_{\mu}\i T_{\mu t}$. It remains to show $T_{\mu t}^2=c_t T_{\mu^2 t} + q_t t^2 T_{\mu^2}$.

Assume $\pi(t)=\e^{\a} s_\a \in W_{J_{\aff}}$ for some maximal short root $\a \in R_J^+$. Let $w \in W_J(1)$ and $s \in J(1)$ such that $s_\a=w s w\i$ and $\ell(s_\a)=2\ell(w)+1$. Let $\l_s \in X^+(1)$ such that $\pi(\l_s)$ is the fundamental weight corresponding to $s$. Since the quadratic relation of $T_t$ is equivalent to that of $T_{t z}$ for any $z \in Z$, we can assume $t=w \l_s s \l_s\i w\i$. Set $\l=\l_s s \l_s\i s\i \in X(1)$. We have $T_{\mu t}^J=\th_{w \mu \l w\i} T_{w s\i w\i}\i=\th_{w \mu \l w\i} T_{w\i}\i T_{s\i}\i T_w\i$. Let $w=s_1 \cdots s_n$ be a reduced expression of $w$ with each $s_i \in S_0(1)$. Then $\<s_{k-1} \cdots \s_1(\a), \a_k^\vee\>=1$ for $k=1, \dots, n$, where $\a_i$ is the simple root corresponding to $s_i$. Applying Lemma \ref{commutator} (2), for $\l' \in w \l w\i Z$ we have \[\tag{a} T_w\i \th_{w \mu\l w\i} T_{w\i}\i = T_{s_2 \cdots s_n}\i \th_{s_1\i w \mu\l w\i s_1} T_{(s_2 \cdots s_n)\i}\i = \cdots = \th_{\mu \l}.\] Similarly, we have \[\tag{b} T_{w\i}\i \th_{s \mu\l s} T_w\i=\th_{w \mu\l w\i}.\]

Case(1): $\a^\vee \notin 2Y$. Then $\l_s \in X(1)$ and $q_t=q_s$. We have \begin{align*} (T_t^J)^2 &= \th_{w \mu \l w\i} T_{w\i}\i T_{s\i}\i T_w\i \th_{w \mu \l w\i} T_{w\i}\i T_{s\i}\i T_w\i \\ &=\th_{w \mu \l w\i} T_{w\i}\i T_{s\i}\i \th_{\mu \l} T_{s\i}\i T_w\i \\ &=\th_{w \mu \l w\i} T_{w\i}\i T_{s\i}\i \th_{\mu \l_s} T_s\i T_s \th_{s \l_s\i s\i} T_{s\i}\i T_w\i \\ &= \th_{w \mu \l w\i} T_{w\i}\i \th_{s \mu \l_s s\i} (c_s+q_s s^2 T_s\i) \th_{s \l_s\i s\i} T_{s\i}\i T_w\i \\ &=c_{(w \l_s) \bullet s} \th_{w \mu^2 \l w\i} T_{w\i}\i T_{s\i}\i T_w\i \\ & \quad + q_s \th_{w \mu^2 \l w\i} T_{w\i}\i \th_{s \mu \l_s s\i} s^2 T_s\i \th_{s \l_s\i s\i} T_{s\i}\i T_w\i \\ &=c_t \th_{w \mu^2 \l w\i} T_{w\i}\i T_{s\i}\i T_w\i + q_s \th_{w \mu \l w\i} T_{w\i}\i \th_{\mu \l s \mu \l_s s \l_s\i} T_w\i \\ &= c_t \th_{w \mu^2 \l w\i} T_{w\i}\i T_{s\i}\i T_w\i+ q_s \th_{w \mu \l s \mu \l_s s \l_s\i w\i} \\ &= c_t T_{\mu^2 t} + q_t t^2 T_{\mu^2}, \end{align*} where the second equality follows from (a); the sixth one follows from Lemma \ref{commutator} (2); the seventh follows from Lemma \ref{commutator} (1).

Case(2): $\a^\vee \in 2Y$. Then $\l_s \notin X(1)$. Let $\chi, w', w'', \ts$ be as in Lemma \ref{commutator} (3). One computes that \begin{align*} (T_t^J)^2 &= \th_{w \mu \l w\i} T_{w\i}\i T_{s\i}\i \th_{\mu \l} T_{s\i}\i T_w\i \\ &=\th_{w \mu \l w\i} T_{w\i}\i T_{s\i}\i \th_\chi \th_{\chi\i \mu \l} T_{s\i}\i T_w\i \\ &=\th_{w \mu \l w\i} T_{w\i}\i T_{s\i}\i \th_\chi T_{s\i}\i \th_{s\i \chi\i \mu \l s} T_w\i \\ &=\th_{w \mu \l w\i} T_{w\i}\i T_{s\i}\i T_{w'} T_{\ts\i} T_{\ts} T_{w''} \th_{\chi\i s\i \chi\i \mu \l s} T_w\i \\ &= \th_{w \mu \l w\i} T_{w\i}\i T_{s\i}\i T_{w'} (c_{\ts} T_{\ts\i}+q_{\ts}) T_{w''} \th_{\chi\i s\i \chi\i \mu \l s} T_w\i \\ &=\th_{w \mu \l w\i} c_{(w s w') \bullet \ts} T_{w\i}\i T_{s\i}\i \th_{w' \ts\i w'' \chi\i s\i \chi\i \mu \l s} T_w\i \\ & \quad + q_{\ts} \th_{w \mu \l w\i} T_{w\i}\i \th_{s \mu \l s} T_w\i \\ &= \th_{w \mu \l w\i} c_{(w s w') \bullet \ts} T_{w\i}\i T_{s\i}\i \th_\mu T_w\i + q_{\ts} \th_{w \mu \l s \mu \l s w\i} \\ &=\th_{w \mu^2 \l w\i} c_{(w s w') \bullet \ts} T_{w\i}\i T_{s\i}\i T_w\i + q_{\ts} t^2 T_{\mu^2},\end{align*} where the first equality follows from (a); the third one follows from \ref{commutator} (1); the fourth and the sixth follow from \ref{commutator} (3); the seventh follows from (b). Note that $q_{\ts}=q_t $ since $\pi(\ts)$ and $\pi(t)$ are conjugate under $\tW$. It remains to check $$w \mu^2 \l s w' c_{\ts} {w'}\i s\i w\i=c_t w \mu^2 \l w\i,$$ that is $$\l s w' c_{\ts} {w'}\i s\i = c_{\l_s \bullet s} \l_s s \l_s\i s\i= \l_s s \l_s\i c_{\l_s \bullet s} s\i,$$ which follows by observing that $\l s=\l_s s \l_s\i$ and $w' c_{\ts} {w'}\i=c_{\l_s \bullet s}$.
\end{proof}

\begin{lem} \label{add}
Let $x, x' \in \tW_J^+(1)$ such that $\ell_J(x' x')=\ell_J(x)+\ell_J(x')$. Then $\ell(x x')=\ell(x)+\ell(x')$. As a consequence, we have $T_y^J T_{y'}^J=T_{y y'}^J$ for $y, y' \in \tW_J(1)$ such that $\ell_J(y y')=\ell_J(y)+\ell_J(y')$.
\end{lem}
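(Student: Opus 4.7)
The plan is to prove (1) via the alcove interpretation of the two length functions, and then deduce (2) from (1) by direct manipulation of the defining formula $T_u^J = T_{u'} T_\l\i$.

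For (1), recall that $\ell(\tw)$ counts hyperplanes of $\fH$ separating $C_0$ from $\tw C_0$ while $\ell_J(\tw)$ counts those of $\fH_J$ separating $C_J$ from $\tw C_J$. Since $C_0 \subset C_J$ and no hyperplane of $\fH_J$ meets the interior of $C_J$, for any $\tw \in \tW_J$ a hyperplane of $\fH_J$ separates $C_0$ from $\tw C_0$ if and only if it separates $C_J$ from $\tw C_J$. Hence $\ell(\tw) = \ell_J(\tw) + N(\tw)$, where $N(\tw)$ is the number of $H_{\a, k}$ with $\a \in R^+ \smallsetminus R_J$ separating $C_0$ from $\tw C_0$. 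For $x \in \tW_J^+(1)$ with $\pi(x) = \e^\l u$, $u \in W_J$ and $\<\l, \a^\vee\> \ge 0$ for $\a \in R^+ \smallsetminus R_J$, the $W_J$-stability of $R^+ \smallsetminus R_J$ places $xC_0$ in the strip between $H_{\a, \<\l, \a^\vee\>}$ and $H_{\a, \<\l, \a^\vee\>+1}$ for each such $\a$, yielding $N(x) = \sum_{\a \in R^+ \smallsetminus R_J} \<\l, \a^\vee\>$. Using $\pi(xx') = \e^{\l + u\l'} uu'$ for $x, x' \in \tW_J^+(1)$ and the substitution $\b = u\i \a$ within the $W_J$-stable set $R^+ \smallsetminus R_J$, one checks $N(xx') = N(x) + N(x')$, which combined with the hypothesis $\ell_J(xx') = \ell_J(x) + \ell_J(x')$ gives $\ell(xx') = \ell(x) + \ell(x')$.

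For (2), decompose $y = y_1 \l_1\i$ and $y' = y_2 \l_2\i$ with $y_1, y_2 \in \tW_J^+(1)$ and $\l_1, \l_2 \in X^+(J)(1)$ chosen sufficiently $J$-regular. Since $\pi(\l_1)$ is central in $\tW_J$, we have $\l_1\i y_2 = y_2' \l_1\i$ for some $y_2' = y_2 z \in \tW_J^+(1)$ with $z \in Z$. Translating the hypothesis $\ell_J(yy') = \ell_J(y) + \ell_J(y')$ through this decomposition yields $\ell_J(y_1 y_2') = \ell_J(y_1) + \ell_J(y_2')$, and by (1) we get $\ell(y_1 y_2') = \ell(y_1) + \ell(y_2')$, so $T_{y_1} T_{y_2'} = T_{y_1 y_2'}$ in $\tch$. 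Combined with the commutations $T_{\l_1}\i T_{y_2} = T_{y_2'} T_{\l_1}\i$ and $T_{\l_1}\i T_{\l_2}\i = T_{\l_1 \l_2}\i$, we compute
\[T_y^J T_{y'}^J = T_{y_1} T_{\l_1}\i T_{y_2} T_{\l_2}\i = T_{y_1 y_2'} T_{\l_1 \l_2}\i = T_{yy'}^J,\]
where the last equality uses the decomposition $yy' = (y_1 y_2')(\l_1 \l_2)\i$.

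The main obstacle is handling the central extension of $\tW$ by $Z$: the commutation $T_{\l_1}\i T_{y_2} = T_{y_2'} T_{\l_1}\i$ must be verified carefully in the (localized) algebra where $T_\l\i$ is meaningful, and $\l_1, \l_2$ must be chosen $J$-regular enough that all intermediate length-additions in $\tW_J$, and thus in $\tW$ via (1), remain valid.
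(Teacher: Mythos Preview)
Your approach to (1) is essentially the same as the paper's: both decompose $\ell$ into $\ell_J$ plus a contribution $N(x)=\sum_{\a\in R^+\smallsetminus R_J}\<\l,\a^\vee\>$ from roots outside $R_J$, and use the $W_J$-stability of $R^+\smallsetminus R_J$ to show this contribution is additive on $\tW_J^+$. The paper phrases this via the explicit Iwahori--Matsumoto length formula rather than your alcove-separation language, but the computation is identical.

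For (2), the paper gives no details at all---it literally writes ``As a consequence'' and stops---so your argument is a genuine addition. It is correct, with one minor slip to watch: from $T_{\l_1}\i T_{\l_2}\i=(T_{\l_2}T_{\l_1})\i=T_{\l_2\l_1}\i$ and $yy'=y_1y_2'\l_1\i\l_2\i=(y_1y_2')(\l_2\l_1)\i$, the element appearing is $\l_2\l_1$, not $\l_1\l_2$. In $\tW(1)$ these may differ by an element of $Z$, but since $T^J_u$ is independent of the chosen decomposition this causes no harm; just make sure the order is consistent when you write it up. The commutation $T_{\l_1}\i T_{y_2}=T_{y_2'}T_{\l_1}\i$ follows as you suggest from $\ell(\l_1 y_2')=\ell(\l_1)+\ell(y_2')=\ell(y_2)+\ell(\l_1)=\ell(y_2\l_1)$ via part (1), since $\l_1\in\Omega_J(1)$ forces $\ell_J$-additivity trivially.
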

\begin{proof}
Write $x=\l u$ and $x'=\l' u'$, where $\l, \l' \in X(1)$ and $u, u' \in W_0(1)$. Since $x, x', x x' \in \tW_J^+(1)$, one computes that \begin{align*} \ell(x x')&=\ell_J(x x') + \sum_{\a \in R^+ \smallsetminus R_J} |\<\l+ \pi(u)(\l'), \a^\vee\>| \\ &=\ell_J(x) +\ell_J(x') + \sum_{a \in R^+ \smallsetminus R_J} \<\l, \a^\vee\> + \sum_{a \in R^+ \smallsetminus R_J} \<\pi(u)(\l'), \a^\vee\> \\ &=\ell_J(x) + \sum_{a \in R^+ \smallsetminus R_J} \<\l, \a^\vee\>+ \ell_J(x') + \sum_{a \in R^+ \smallsetminus R_J} \<\l', \a^\vee\> \\ &=\ell(x)+\ell(x')\end{align*} as desired.
\end{proof}

\subsection{Proof of Theorem \ref{para}}
(1) Let $\tch_J(q^J, c^J)$ be the generic pro-$p$ Hecke algebra associated to $\tW_J(1)$, where $q^J_t=q_t$ and $c^J_t=c_t$ for $t \in T(1) \cap \tW_J(1)$. Denote by $(T_{w, J})_{w \in \tW_J(1)}$ its Iwahori-Matsumoto basis. Combining Lemma \ref{quadrac} and Lemma \ref{add}, we see that there exists a surjective algebra homomorphism from $\tch_J(q^J, c^J)$ to $\tch_J$ sending $T_{w, J}$ to $T_w^J$ for $w \in \tW_J(1)$. It is easy to see $(T_w^J)_{w \in \tW_J(1)}$ is linear independent. Hence the homomorphism is an isomorphism.

(2) Let $x, x' \in \tW_J^+(1)$. We have to show $T_x^J T_{x'}^J \in \tch_J^+$. We argue by induction on $\ell(x')$. If $\ell(x')=0$, $T_x T_{x'}=T_{x x'} \in \tch_J^+$ by Lemma \ref{add}. Assume $T_x T_{x''} \in \tch_J^+$ for any $x, x'' \in \tW_J^+(1)$ with $\ell(x'') < \ell(x')$. Again by Lemma \ref{add}, it remains to consider the case $\ell(x x') < \ell(x) + \ell(x')$. Let $x'=\o s_1 \cdots s_n$ be a reduced expression with respect to $\ell_J$, where $\o \in \Omega_J(1)$ and $s_i \in J_{\aff}(1)$ for $1 \le i \le n$. By assumption, there exists $1 \le m \le n$ such that $\ell_J(x' \o s_1 \cdots s_{m-1})=\ell_J(x)+\ell_J(\o s_1 \cdots s_{m-1})$ and $x \o s_1 \cdots s_m <_J x \o s_1 \cdots s_{m-1}$. By the exchange condition for the Coxeter group $W_{J_{\aff}}$, $x \o s_1 \cdots s_m= y \o s_1 \cdots s_{m-1}$ for some $y <_J x$ such that $\ell_J(y \o s_1 \cdots s_{m-1})=\ell_J(y)+ \ell_J(\o s_1 \cdots s_{m-1})$. So $y \in \tW_J^+(1)$ by Lemma \ref{alcove}.

By part (1) and Lemma \ref{quadrac}, one computes that \begin{align*} T_x^J T_{x'}^J &=T_x^J T_{\o s_1 \cdots s_{m-1}}^J T_{s_m}^J T_{s_{m+1} \dots s_n}^J \\ &=T_{x \o s_1 \cdots s_{m-1}}^J T_{s_m}^J T_{s_{m+1} \cdots s_n}^J \\ &=T_x^J T_{\o s_1 \cdots s_{m-1}}^J c_{s_m} T_{s_{m+1} \dots s_n}^J + q_{s_m} T_y^J T_{\o s_1 \cdots s_{m-1}}^J T_{s_{m+1} \dots s_n}^J. \end{align*} Note that $T_{\o s_1 \cdots s_{m-1}}^J T_{s_{m+1} \dots s_n}^J$ is a linear combination of $T_{x''}^J$ such that $x'' <_J x'$.  Again by Lemma \ref{alcove}, we have $x'' \in \tW_J^+(1)$. Now the statement follows by induction hypothesis.

\section{Standard representatives}\label{standard}

By Theorem \ref{cocenter}, the cocenter $\overline{\tch}$ of an affine pro-$p$ Hecke algebra $\tch$ is spanned by the image of $T_\tw$ for $\tw \in \tW(1)_{\min}$. In this section, we will compute the trace of $T_\tw$ for $\tw \in \tW(1)_{\min}$ using certain elements in the parabolic subalgebra $\tch_J^+$ of $\tch$.

\subsection{} Let $n_0=\sharp W_0$. For any $\tw \in \tW(1)$, $\tw^{n_0}=\l$ for some $\l \in X(1)$. Let $\nu_\tw=\l/n_0 \in X_\QQ$ and $\bar \nu_\tw \in X_\QQ^+$ be the unique dominant element in the $W_0$-orbit of $\nu_w$. It is easy to see that the map $\tW \to V, \tw \mapsto \bar \nu_\tw$ is constant on each conjugacy class of $\tW$. 

We say that an element $\tw \in \tW(1)$ is straight if $\ell(\tw^n)=n \ell(\tw)$ for any $n \in \NN$. By \cite[Lemma 1.1]{He00}, $\tw$ is straight if and only if $\ell(\tw)=\<\bar \nu_\tw, 2 \rho^\vee\>$, where $\rho$ is the half sum of positive coroots. A conjugacy class that contains a straight element is called a straight conjugacy class.

It is proved in \cite[Proposition 2.8]{HN14} that for each cyclic-shift class in $\tW(1)_{\min}$, we have some nice representatives.

\begin{prop}\label{rep}
For any $\tw \in \tW(1)_{\min}$, there exists a subset $K' \subseteq S_{\aff}$ with $W_{K'}$ finite, a straight element $y \in {}^{K'} \tW {}^{K'}(1)$ with $y K'(1) y \i=K'(1)$, and an element $w \in W_{K'}(1)$ such that $\tw \tilde \approx w y$. Here $W_{K'} \subseteq W_{\aff}$ denotes the subgroup generated by reflections of $K'$.
\end{prop}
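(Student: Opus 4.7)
The plan is to bootstrap from the analogous statement for $\tW$ in place of $\tW(1)$, namely \cite[Proposition 2.8]{HN14}, by lifting cyclic-shift operations through the projection $\pi$ and checking that straightness, double-coset minimality, and normalization of $K'$ are all controlled at the level of $\tW$.

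First I would reduce to the $\tW$-case. Set $w_0 := \pi(\tw)$. Because the length function on $\tW(1)$ is the pullback of that on $\tW$ (as recalled in \S\ref{1.1}) and $\pi$ intertwines conjugation, the minimality of $\tw$ in its $\tW(1)$-conjugacy class forces $w_0$ to be of minimal length in its $\tW$-conjugacy class. Invoking \cite[Proposition 2.8]{HN14}, I obtain $K' \subseteq S_{\aff}$ with $W_{K'}$ finite, a straight element $\bar y \in {}^{K'}\tW^{K'}$ satisfying $\bar y K' \bar y\i = K'$, and $\bar w \in W_{K'}$ with $w_0 \tilde\approx \bar w \bar y$ in $\tW$.

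Next I would lift the cyclic-shift chain to $\tW(1)$. Observe that any elementary cyclic shift $x \xrightarrow{s} s x s\i$ in $\tW$ lifts: for any lifts $\ts \in S(1)$ of $s$ and $\tilde x \in \tW(1)$ of $x$, the element $\ts \tilde x \ts\i$ projects to $s x s\i$, hence has the same length; therefore $\tilde x \xrightarrow{\ts} \ts \tilde x \ts\i$ in $\tW(1)$. Likewise, conjugation by an element of $\Omega$ lifts via any chosen preimage in $\Omega(1)$. Running the chain realizing $w_0 \tilde\approx \bar w \bar y$ in $\tW$ with arbitrarily chosen lifts, starting from $\tw$, produces $\tw' \in \tW(1)$ with $\tw \tilde\approx \tw'$ and $\pi(\tw') = \bar w \bar y$.

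Finally I would choose a lift $w \in W_{K'}(1)$ of $\bar w$ and set $y := w\i \tw'$, a lift of $\bar y$. The membership $y \in {}^{K'}\tW^{K'}(1)$ follows since $\pi(y) = \bar y \in {}^{K'}\tW^{K'}$ and this is a length-minimality condition depending only on $\pi(y)$. Straightness transfers through $\pi$ because $\ell(y^n) = \ell(\pi(y)^n) = \ell(\bar y^n) = n\ell(\bar y) = n\ell(y)$ for every $n \in \NN$. For the normalization condition, $y K'(1) y\i$ projects onto $\bar y K' \bar y\i = K'$, so it is contained in $\pi\i(K') = K'(1)$, and equality holds by cardinality. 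The main work is really borne by the $\tW$-level result \cite[Proposition 2.8]{HN14}; the lifting step is essentially formal, expressing the fact that length, cyclic shifts, straightness, double-coset minimality, and the action of $y$ on $K'$ are all determined at the level of $\tW$.
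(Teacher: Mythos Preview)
Your argument is correct and is exactly the approach the paper intends: the paper simply cites \cite[Proposition~2.8]{HN14} (which is stated for $\tW$), tacitly relying on the fact that length, cyclic shifts, straightness, the double-coset condition, and the normalization of $K'$ all depend only on the image under $\pi$. You have spelled out this lifting carefully, and each of your verifications (including the cardinality argument for $y K'(1) y^{-1}=K'(1)$, which uses that $Z$ and $K'$ are finite) is sound.
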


\subsection{} In the situation of Proposition \ref{rep}, we call $w y$ a standard representative of the cyclic-shift class of $\tw$. By \cite[Proposition 2.2]{He00}, $\bar \nu_{\tw}=\bar \nu_{w y}=\bar \nu_y$. The expression of standard representative relates each conjugacy class of $\tW$ with a straight conjugacy class. It plays an important role in the study of combinatorial properties of conjugacy classes of affine Weyl groups \cite{HN14}, $\s$-conjugacy classes of $p$-adic groups \cite{He99} and representations of affine Hecke algebras with nonzero parameters \cite{CH}.

However, for a given cyclic-shift class in $\tW_{\min}(1)$, the standard representatives are in general, not unique. This leads to some difficulty in understanding the cyclic-shift classes in $\tW_{\min}(1)$ and their relations to the representations of $\tch$.

To overcome the difficulty, we use the notion of standard quadruples.

We say that $(J, \ux, \G, C)$ is a {\it standard quadruple} of $\tW$ if
\begin{itemize}

\item $J \subset S_0$;

\item $\ux \in \Omega_J$ and $\<\nu_{\underline{x}}, \a^\vee\>>0$ for all $\a \in R^+ \smallsetminus R_J$;

\item $\G \subseteq J_{\aff}$ such that $W_\G$ finite and the conjugation action of $\ux$ stabilizes $\G$.

\item $C$ is an elliptic $\Ad(\ux)$-twisted conjugacy class of $W_\G$.
\end{itemize}

We say that $(J, \ux, \G, C)$ and $(J', \ux', \G', C')$ are {\it strongly equivalent} if $J=J'$ and there exists $\underline{\o} \in \Omega_J$ such that $(\ux', \G', C')$ is obtained from $(\ux, \G, C)$ by conjugation by $\underline{\o}$. It is proved in \cite[Proposition 3.23]{HeX} that

\begin{prop} \label{quadruple}
The map $(J, \ux, \G, C) \mapsto [C_{\min} \ux]$ induces a bijection between the strongly equivalence classes of standard quadruples and cyclic-shift classes in $\tW_{\min}$.
\end{prop}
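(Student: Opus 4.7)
The plan is to verify the map $(J, \ux, \G, C) \mapsto [C_{\min}\ux]$ is well-defined, surjective, and injective, working in $\tW$ throughout (the finite torus $Z$ plays no role here). Well-definedness consists of three checks: that $w \ux \in \tW_{\min}$ for $w \in C_{\min}$, that all such $w\ux$ lie in a single cyclic-shift class, and that strongly equivalent quadruples give the same class. For the first, the key is a length decomposition: since $\ux \in \Omega_J$ has $\ell_J(\ux) = 0$ but $\<\nu_\ux, \a^\vee\> > 0$ for $\a \in R^+\smallsetminus R_J$, a computation in the spirit of Lemma \ref{add} yields $\ell(w \ux) = \ell_J(w) + \<\bar\nu_\ux, 2\rho^\vee\>$. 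Since $\bar\nu_{w\ux} = \bar\nu_\ux$ is a conjugacy invariant and $C$ is elliptic in $W_\G$, classical results of Geck--Pfeiffer and Geck--Kim--Pfeiffer on minimal length elements in twisted conjugacy classes of finite Coxeter groups force $\ell_J(w)$ to realize the minimum over the full $\tW$-conjugacy class. The same classical theory shows that $C_{\min}$ forms one $\Ad(\ux)$-twisted cyclic-shift class in $W_\G$, and each such step $w \mapsto swsu^{-1}$ with $s \in \G$ lifts to a cyclic-shift move of $w\ux$ in $\tW$, because $\G \subseteq J_{\aff}$ already sits in $\tW$. Finally, strong equivalence is conjugation by a length-zero element $\underline{\o} \in \Omega_J$, which is itself a cyclic-shift operation.

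For surjectivity, I would start with any cyclic-shift class in $\tW_{\min}$ and apply Proposition \ref{rep} to pick a standard representative $wy$, with $y$ straight, $y K' y^{-1} = K'$, and $w \in W_{K'}$. I then extract a quadruple from $(w,y)$: let $J = \{s_\a \in S_0 : \<\bar\nu_y, \a^\vee\> = 0\}$. After a further cyclic shift by an element of $W_0$, one may assume $\nu_y$ is dominant, and then straightness $\ell(y) = \<\bar\nu_y, 2\rho^\vee\>$ combined with the defining hyperplanes of $J$ forces $y \in \Omega_J$ with the required superregularity. Taking $\ux := y$, $\G := K'$, and $C$ equal to the $\Ad(\ux)$-twisted conjugacy class of $w$ in $W_\G$, ellipticity of $C$ must then hold, for otherwise one could shorten $w y$ further, contradicting minimality.

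Injectivity is the subtlest step. Suppose $(J, \ux, \G, C)$ and $(J', \ux', \G', C')$ map to the same cyclic-shift class, witnessed by $w \ux \approx w'\ux'$. Since $\bar\nu$ is a conjugacy invariant and $J$ is recovered as the vanishing locus of $\bar\nu_\ux$ on simple coroots, we immediately get $J = J'$. The substantive content is then that the ``straight core'' $\ux$ is determined by the cyclic-shift class up to $\Omega_J$-conjugation; once this is known, $C = C'$ follows by restricting attention to the Levi piece $\tW_J$ and invoking the finite-type theory inside $W_\G$ again. I expect this uniqueness of the core to be the main obstacle: one must analyze each elementary cyclic-shift move $v \mapsto svs^{-1}$ in $\tW$ along a chain connecting $w\ux$ to $w'\ux'$ and show it either preserves the decomposition $W_\G \cdot \ux$ or factors through one that does. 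The tools are the length invariance of cyclic shifts, the invariance of $\bar\nu$, and the length formula $\ell(v) = \ell_J(\text{Levi part}) + \<\bar\nu_v, 2\rho^\vee\>$ for elements of the form ``Levi part times straight part'', which pins down the straight part once the Newton vector and the length are fixed.
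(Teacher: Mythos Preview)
The paper does not prove this proposition at all: it is quoted verbatim from \cite[Proposition~3.23]{HeX}, so there is no ``paper's own proof'' to compare your proposal against. What the paper does do, in \S\ref{pair}, is exactly the construction you outline for surjectivity: given a standard representative $wy$ from Proposition~\ref{rep}, it conjugates by $h \in {}^J W_0$ to make $\nu_y$ dominant and reads off $(J,\pi(x),\G,C)$, asserting without further argument that this is the standard quadruple corresponding to the cyclic-shift class. So your surjectivity sketch is consistent with the paper's usage, though you should note that the conjugation by $h$ is what carries $K' \subseteq S_{\aff}$ into $J_{\aff}$, and that ellipticity of $C$ is not quite automatic from minimality alone---one must argue that a non-elliptic twisted class in $W_\G$ would admit a representative supported on a proper $\Ad(\ux)$-stable subset, which after passing to $\tW$ contradicts the choice of $\G$ as $\bigcup_i \pi(y)^i \supp(w)\pi(y)^{-i}$ rather than minimality per se.

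Your injectivity sketch is where the real gap lies, and you are right to flag it. Tracking a chain of elementary cyclic shifts $v \mapsto svs^{-1}$ and arguing that the ``straight core'' is preserved up to $\Omega_J$-conjugacy is delicate: a single move can take you out of the subset $W_\G \cdot \ux$, and the length formula you cite only controls elements already of that shape. The argument in \cite{HeX} (building on \cite{HN14} and \cite{He00}) proceeds instead by first classifying straight conjugacy classes via the pair $(J,\ux)$ up to $\Omega_J$-conjugacy, and then using the fact that two minimal elements in the same $\tW$-conjugacy class with the same Newton point are related by cyclic shifts \emph{within} a suitable Levi. This reduces the comparison of $(\G,C)$ and $(\G',C')$ to the finite Coxeter setting, where the Geck--Pfeiffer theory applies. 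Your move-by-move approach could in principle be made to work, but it would require substantially more structure than the length formula alone provides.
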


\subsection{}\label{pair}

Let $w y$ be a standard representative as in Proposition \ref{rep}. Let $J=J_{\bar \nu_y} \subseteq S_0$ and $K=\cup_i \pi(y)^i \supp(w) \pi(y)^{-i} \subseteq S_{\aff}$. By loc. cit., $W_E$ is finite. Let $h \in {}^J W_0(1)$ such that $h(\nu_y)=\bar \nu_y$. Set $x=h y h\i$ and $\G=\pi{h} K \pi{h}\i \subseteq J_{\aff}$. Denote by $C$ the $\Ad(\pi(x))$-twisted conjugacy class of $W_\G$. By construction, $C$ is elliptic. One checks that $(J, \pi(x), \G, C)$ is the standard quadruple corresponding to the cyclic-shift class of $\pi(wy)$ in Proposition \ref{quadruple}. In this paper, we are mainly interested in the pair $(J, \G)$ associated to $w y$. We call it the {\it associated standard pair}. Note that $\G \subset J_{\aff}$ with $W_{\G}$ finite.

\smallskip

Now we state the main result of this section.
\begin{prop} \label{power}
Let $w, y, h, J, K$ be as in \S\ref{pair}. Then for $n \gg 0$, $$T_{w y}^n \equiv (T_{h w y h\i}^J)^n \mod [\tch, \tch].$$
\end{prop}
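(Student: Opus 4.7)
The strategy is to show, for $n \gg 0$, that both sides reduce in the cocenter $\overline{\tch}$ to the class of a pure translation, and that these two translations are conjugate under $W_0$. First, I would verify $h w y h\i \in \tW_J^+(1)$: decomposing $h w y h\i = (h w h\i) \cdot x$ with $x = h y h\i$, the factor $h w h\i$ has support in $\G \subseteq J_{\aff}$ and hence lies in $W_{J_{\aff}}(1) \subseteq \tW_J^+(1)$, while $x$ is straight with $\nu_x = h(\nu_y) = \bar\nu_y$ dominant and $J_{\bar\nu_y} = J$, which forces $x \in \tW_J^+(1)$ by the standard structure of straight elements. A check on translation parts confirms $J$-positivity of the product, so the embedding $\tch_J^+ \lto \tch$ sends $T_{hwyh\i}^J$ to $T_{hwyh\i}$ and identifies $(T_{hwyh\i}^J)^n$ with $T_{hwyh\i}^n$ computed in $\tch$.

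Next, I would exploit the standard representative structure. Using $\ell(wy) = \ell(w) + \ell(y)$ (from $y \in {}^{K'}\tW{}^{K'}(1)$), the commutation identity $T_y T_{w'} = T_{y w' y\i} T_y$ for $w' \in W_{K'}(1)$, and the straightness $\ell(y^n) = n \ell(y)$, iteration yields
\[
T_{wy}^n = T_{w_1} T_{w_2} \cdots T_{w_n} \cdot T_{y^n}, \qquad w_i := y^{i-1} w y^{-(i-1)} \in W_{K'}(1),
\]
and analogously inside $\tch_J^+$ one obtains $(T_{hwyh\i}^J)^n = T_{w'_1} \cdots T_{w'_n} \cdot T_{x^n}$ with $w'_i = h w_i h\i$. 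Since $W_{K'}$ is finite, the sequence $(w_i)$ is periodic with some period $p$; taking $n$ divisible by $p$ and by the order of the $W_0$-part of $\pi(wy)$, I have $(wy)^n = t^{n \nu_{wy}}$ and $(hwyh\i)^n = t^{n h(\nu_{wy})}$, both pure translations, and $T_{y^n}$ commutes with each $T_{w_i}$. Combining cyclic invariance of $\overline{\tch}$ with this commutativity, the finite-part product $T_{w_1} \cdots T_{w_n}$ should collapse modulo $[\tch, \tch]$ to a central factor in $Z$, giving $T_{w_1} \cdots T_{w_n} T_{y^n} \equiv T_{(wy)^n} \mod [\tch, \tch]$ and analogously for the other side. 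A final application of Lemma~\ref{to-min}(1) along a reduced word for $h$ reduces both translations to a common representative in $\overline{\tch}$, since $t^{n \nu_{wy}}$ and $t^{n h(\nu_{wy})}$ lie in the same $W_0$-orbit and therefore have equal length.

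The hard part will be the cocenter collapse of the finite-part product $T_{w_1} T_{w_2} \cdots T_{w_n}$: although $w_1 w_2 \cdots w_n y^n = (wy)^n$ inside $\tW(1)$, the analogous equality of $T$-products fails in $\tch$ because $\ell((wy)^n) < n \ell(wy)$ whenever $w \neq e$, producing non-reduced multiplications in the finite Hecke subalgebra attached to $W_{K'}$. Showing that the discrepancy lies in $[\tch, \tch]$ will require careful cyclic manipulations and commutator bookkeeping, and will likely use the ellipticity of the $\Ad(\pi(x))$-twisted conjugacy class $C$ in the standard quadruple $(J, \pi(x), \G, C)$ attached to the cyclic-shift class of $wy$ by Proposition~\ref{quadruple}.
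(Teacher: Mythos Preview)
Your strategy has a genuine gap at the ``cocenter collapse'' step, and it is not merely hard---it is false as stated. You are hoping that $T_{w_1}\cdots T_{w_n}T_{y^n}\equiv T_{(wy)^n}\bmod [\tch,\tch]$, i.e.\ that the image of $T_{wy}^n$ in $\overline{\tch}$ is the class of a single basis element. But already for $n=2$ and $w=s\in K'$ with $sy s\i=s$ (so $y$ fixes $s$), one has $T_{sy}^2=T_s^2 T_{y^2}=c_s T_s T_{y^2}=c_s T_{sy^2}$, whereas $(sy)^2=s^2 y^2$ with $s^2\in Z$, so $T_{(sy)^2}=T_{s^2}T_{y^2}$. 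In $\overline{\tch}$ these differ: $c_s T_{sy^2}$ has support $\{s\}$ while $T_{s^2 y^2}$ has empty support, and they are not equal even after further cyclic reduction. More generally, the expansion $T_{wy}^n=\sum_{w'\in W_K(1)}a_{w'}T_{w'y^n}$ involves many terms with $\supp(w')\neq\emptyset$, and nothing forces these to cancel in the cocenter. The ellipticity of $C$ does not rescue this: ellipticity controls which cyclic-shift classes appear, not whether the sum collapses to one term.

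The paper's proof never attempts such a collapse. Instead it compares the two sides \emph{term by term}. The expansion $T_{wy}^n=\sum_{w'}a_{w'}T_{w'y^n}$ takes place entirely inside the finite subalgebra $\ch_K$ (times $T_{y^n}$), and the parallel expansion of $(T^J_{hwyh\i})^n$ inside $\ch_{J,\G}$ has \emph{the same coefficients} $a_{w'}$, because the map $T_{w'}\mapsto T^J_{hw'h\i}$ is an algebra isomorphism $\ch_K\cong\ch_{J,\G}$ (both are finite pro-$p$ Hecke algebras with matching quadratic relations, by Lemma~\ref{quadrac}). What remains is to show $T_{w'y^n}\equiv T^J_{hw'y^nh\i}\bmod[\tch,\tch]$ for each fixed $w'\in W_K(1)$. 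This is exactly Corollary~\ref{xG}: the length computation of Proposition~\ref{u-x}(1) shows that conjugation by each simple factor of $h$ preserves the length of $w'y^n$ once $n\gg0$, so $w'y^n\approx hw'y^nh\i$ and hence $T_{w'y^n}\equiv T_{hw'y^nh\i}$; for $n\gg0$ the element $hw'y^nh\i$ is $J$-positive, so $T_{hw'y^nh\i}=T^J_{hw'y^nh\i}$. (A side remark: your claim that $hwyh\i\in\tW_J^+(1)$ for $n=1$ is not obviously true and is not needed---$J$-positivity is only used for $n\gg0$.)
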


\begin{rmk}
Note that $(T_{h w y h\i}^J)^n \in \tch_J^+ \subseteq \tch$.
\end{rmk}

The following is a variation of the length formula in \cite{IM}.

\begin{lem}
For $w \in W_0(1)$ and $\a \in R$, set $$\d_w(\a)=\begin{cases} 0, & \text{ if } w \a \in R^+; \\ 1, & \text{ if } w \a \in R^-. \end{cases}$$ Then for any $x, y \in W_0(1)$ and $\mu \in X(1)$, we have that $$\ell(x t^\mu y)=\sum_{\a \in R^+} |\<\mu, \a^\vee\>+\d_x(\a)-\d_{y \i}(\a)|.$$
\end{lem}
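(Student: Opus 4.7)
The plan is to prove this directly via the alcove model, as a variant of the classical Iwahori--Matsumoto derivation cited in the statement. Since the length function on $\tW(1)$ is inflated from $\tW = X \rtimes W_0$ and both sides of the formula depend only on the images of $x, y, \mu$ in $W_0$ and $X$, I may assume $x, y \in W_0$ and $\mu \in X$. I will use the geometric characterization
$$\ell(\tw) = \#\{H \in \fH : H \text{ separates } C_0 \text{ from } \tw C_0\}$$
and organize the count by the parallel families $\{H_{\a, k}\}_{k \in \ZZ}$ indexed by $\a \in R^+$, computing the contribution of each family separately.

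Fix $\a \in R^+$. A direct computation with $\tw = x t^\mu y$ gives, for $v \in C_0$,
$$\<\tw v, \a^\vee\> = \<\mu, x^{-1}\a^\vee\> + \<v, (xy)^{-1}\a^\vee\>,$$
where the second summand lies in $(0, 1)$ if $(xy)^{-1}\a \in R^+$ and in $(-1, 0)$ otherwise. Setting $N_\a = \<\mu, x^{-1}\a^\vee\>$, the image interval $\<\tw C_0, \a^\vee\>$ is therefore an open unit interval located at $(N_\a, N_\a+1)$ or $(N_\a-1, N_\a)$. A short case analysis on the position of this interval relative to $(0,1)$ shows that the number of integers $k$ for which $H_{\a, k}$ separates the two intervals equals $|N_\a - \d_{(xy)^{-1}}(\a)|$ uniformly. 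Summing over $\a \in R^+$ yields
$$\ell(\tw) = \sum_{\a \in R^+} \bigl|\<\mu, x^{-1}\a^\vee\> - \d_{(xy)^{-1}}(\a)\bigr|. \quad (*)$$

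It remains to re-index $(*)$ into the claimed form. Substitute $\b = x^{-1}\a$, so $\b$ runs bijectively over $x^{-1}R^+$, and observe that $\d_{(xy)^{-1}}(x\b) = \d_{y^{-1}}(\b)$, rewriting $(*)$ as $\sum_{\b \in x^{-1}R^+} |\<\mu, \b^\vee\> - \d_{y^{-1}}(\b)|$. For $\b \in R^+ \cap x^{-1}R^+$ one has $\d_x(\b) = 0$, so the corresponding summand already matches the target. For $\b \in R^- \cap x^{-1}R^+$, replace $\b$ by $\b' = -\b \in R^+$; then $\d_x(\b') = 1$, $\<\mu, \b^\vee\> = -\<\mu, \b'^\vee\>$, and $\d_{y^{-1}}(\b) = 1 - \d_{y^{-1}}(\b')$, and these three identities combine inside the absolute value to give $|\<\mu, \b'^\vee\> + \d_x(\b') - \d_{y^{-1}}(\b')|$. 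Reuniting the two cases recovers the claimed formula. The main delicate point is precisely this sign bookkeeping in the re-indexing --- ensuring that the flip in $\<\mu, \b^\vee\>$, the flip in $\d_{y^{-1}}$, and the newly active term $\d_x = 1$ combine correctly inside $|\cdot|$; the alcove-counting itself is elementary.
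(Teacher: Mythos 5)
Your argument is correct. Note that the paper itself offers no proof of this lemma: it is stated as ``a variation of the length formula in [IM]'' and left to the reference, so you are supplying a complete argument where the authors give none. Your alcove-counting derivation checks out in detail. The computation $\<\tw v,\a^\vee\>=\<\mu,x\i\a^\vee\>+\<v,(xy)\i\a^\vee\>$ for $v\in C_0$ is right, and the unit-interval case analysis does give exactly $|N_\a-\d_{(xy)\i}(\a)|$ separating hyperplanes in the family $\{H_{\a,k}\}_k$ in both cases ($N$ vs.\ $|N-1|$ integers strictly between the intervals). The re-indexing is the only place where one could slip, and your bookkeeping is correct: $\d_{(xy)\i}(x\b)=\d_{y\i}(\b)$, the set $x\i R^+$ splits as $(R^+\cap x\i R^+)\sqcup(R^-\cap x\i R^+)$, and negating the second piece lands it bijectively onto $R^+\cap x\i R^-$, so that the two pieces together exhaust $R^+$; the identity $|-\<\mu,\b'^\vee\>-1+\d_{y\i}(\b')|=|\<\mu,\b'^\vee\>+1-\d_{y\i}(\b')|$ then produces the $\d_x(\b')=1$ term exactly as claimed. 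The reduction from $\tW(1)$ to $\tW$ via inflation of the length function is also legitimate and worth stating, as you do. The one hypothesis you are silently using is that $R$ is reduced (so that the families $\{H_{\a,k}\}_{k}$, $\a\in R^+$, partition $\fH$ without repetition); this is standard for a root datum and consistent with the paper's conventions, but it deserves a one-line remark.
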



\begin{prop}\label{u-x}
Let $(J, \G)$ be a standard pair. Let $x \in \Omega_J(1)$ such that $\nu_x \in X_\QQ^+(J)$. Then for any $u \in W_\G(1)$ we have

(1) for $n \gg 0$ and $h \in {}^J W_0(1)$, $\ell(h \i u x^n  h)=\ell(u x^n)$.

(2) for $n \gg 0$, $\ell(u x^{n+n_0})=\ell(u x^n)+\ell(x^{n_0})$, where $n_0=\sharp W_0$.
\end{prop}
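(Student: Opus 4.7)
My plan is to apply the Iwahori--Matsumoto length formula stated just above to an explicit normal form of $u x^n$. Since $u \in W_\G(1) \subset \tW_J(1)$ and $x \in \Omega_J(1) \subset \tW_J(1)$, I can write $\pi(u) = t^{\nu_u} u_0$ and $\pi(x) = t^{\mu_x} x_0$ with $u_0, x_0 \in W_J$, $\nu_u \in \ZZ R_J$ and $\mu_x \in X$. Iterating yields
$$\pi(u x^n) = t^{\mu_n} v_n, \qquad v_n := u_0 x_0^n \in W_J, \qquad \mu_n := \nu_u + u_0 \sum_{i=0}^{n-1} x_0^i \mu_x.$$
Set $\l := n_0 \nu_x$. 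Since $x_0^{n_0} = 1$, $\l = \sum_{i=0}^{n_0-1} x_0^i \mu_x$, and the hypothesis $\nu_x \in X_\QQ^+(J)$ forces $\l$ to be $W_J$-fixed with $\<\l, \a^\vee\> = 0$ for $\a \in R_J$ and $\<\l, \a^\vee\> > 0$ for $\a \in R^+ \smallsetminus R_J$. Dividing $n = q n_0 + r$ with $0 \le r < n_0$ gives $\mu_n = q\l + O(1)$, where the $O(1)$ piece is bounded uniformly in $n$. Hence, for $n \gg 0$, $\<\mu_n, \a^\vee\>$ is large and positive for all $\a \in R^+ \smallsetminus R_J$; finiteness of ${}^J W_0$ and the uniform bound $|R^+|$ on the $\d$-corrections in the length formula let the threshold be chosen uniformly in $h$.

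For (2), $\pi(x^{n_0}) = t^\l$ and $v_n$ fixes $\l$, so $\pi(u x^{n+n_0}) = t^{\mu_n + \l} v_n$. Applying the length formula to $\pi(u x^n) = 1 \cdot t^{\mu_n} \cdot v_n$, using $\d_{v_n\i}(\a) = 0$ for $\a \in R^+ \smallsetminus R_J$ (as $v_n\i \in W_J$) together with the positivity established above,
$$\ell(u x^n) = \sum_{\a \in R^+ \smallsetminus R_J} \<\mu_n, \a^\vee\> + \sum_{\a \in R_J^+} |\<\mu_n, \a^\vee\> - \d_{v_n\i}(\a)|.$$
The analogous formula for $\ell(u x^{n+n_0})$ replaces $\mu_n$ by $\mu_n + \l$. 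The second sum is unchanged because $\<\l, \a^\vee\> = 0$ on $R_J$, while the first grows by $\sum_{\a \in R^+ \smallsetminus R_J} \<\l, \a^\vee\> = \ell(t^\l) = \ell(x^{n_0})$, proving (2).

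For (1), write $\pi(h\i u x^n h) = h\i \cdot t^{\mu_n} \cdot (v_n h)$ and apply the length formula with $(x, \mu, y) = (h\i, \mu_n, v_n h)$:
$$\ell(h\i u x^n h) = \sum_{\a \in R^+} |\<\mu_n, \a^\vee\> + \d_{h\i}(\a) - \d_{h\i v_n\i}(\a)|.$$
On $R_J^+$: $h \in {}^J W_0(1)$ gives $h\i(R_J^+) \subset R^+$, so $\d_{h\i}(\a) = 0$; and since $h\i$ preserves the sign of any root in $R_J$ and $v_n\i \a \in R_J$, also $\d_{h\i v_n\i}(\a) = \d_{v_n\i}(\a)$. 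Thus the $R_J^+$ contribution matches that of $\ell(u x^n)$. On $R^+ \smallsetminus R_J$: absolute values may be dropped for $n \gg 0$, leaving a $\d$-correction $\sum_{\a \in R^+ \smallsetminus R_J}(\d_{h\i}(\a) - \d_{h\i v_n\i}(\a))$. Because $v_n\i \in W_J$ permutes $R^+ \smallsetminus R_J$ bijectively (elements of $W_J$ only change signs of roots lying in $R_J$), the substitution $\b = v_n\i \a$ identifies $\sum_\a \d_{h\i v_n\i}(\a)$ with $\sum_\b \d_{h\i}(\b)$, so the correction vanishes. Hence $\ell(h\i u x^n h) = \ell(u x^n)$. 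The main obstacle throughout is the signed bookkeeping of these $\d$-corrections; the structural facts that make everything cancel are the $W_J$-stability of $R^+ \smallsetminus R_J$ as a set and the sign-preservation of ${}^J W_0$-elements on $R_J$.
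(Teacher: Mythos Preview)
Your proof is correct and follows essentially the same approach as the paper's: apply the Iwahori--Matsumoto length formula to $h^{-1}(t^{\mu_n} v_n)h$, split the sum over $R^+$ into $R_J^+$ and $R^+\smallsetminus R_J$, use $h\in{}^J W_0$ to kill $\d_{h^{-1}}$ on $R_J^+$ and to identify $\d_{h^{-1}v_n^{-1}}$ with $\d_{v_n^{-1}}$ there, and on $R^+\smallsetminus R_J$ drop the absolute values for $n\gg0$ and cancel the $\d$-corrections via the bijection $\a\mapsto v_n^{-1}\a$ (the paper phrases this last cancellation as $\ell(h)-\ell(h)=0$, which is the same count). Your treatment of part~(2) is likewise identical in substance to the paper's, and your explicit description of $\mu_n$ just makes the growth estimate the paper asserts (``$\<\l,\a^\vee\> >0$ for $n\gg0$'') more transparent.
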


\begin{proof}
We have $u x^n=\l w$ for some $\l \in X(1)$ and $w \in W_J(1)$. Since $\<\nu_x, \a^\vee\>>0$ for any $\a \in R^+ \smallsetminus R_J$, we have $\<\l, \a^\vee\>>0$ for any $\a \in R^+ \smallsetminus R^+_J$ as $n \gg 0$.

Notice that for $\a \in R_J$, $\d_{h \i(\a)}=\d_{\a}$. Now \begin{align*} & \ell(h \i u x^n h)=\sum_{\a \in R^+} |\<\l, \a^\vee\>+\d_{h \i}(\a)-\d_{h \i w \i}(\a)| \\ &=\sum_{\a \in R^+_J} |\<\l, \a^\vee\>-\d_{w \i}(\a)|+\sum_{\a \in R^+ \smallsetminus R^+_J} |\<\l, \a^\vee\>+\d_{h \i}(\a)-\d_{h \i w \i}(\a)| \\ &=\sum_{\a \in R^+_J} |\<\l, \a^\vee\>-\d_{w \i}(\a)|+\sum_{\a \in R^+ \smallsetminus R^+_J} \bigl(\<\l, \a^\vee\>+\d_{h \i}(\a)-\d_{h \i w \i}(\a) \bigr) \\ &=\sum_{\a \in R^+_J} |\<\l, \a^\vee\>-\d_{w \i}(\a)|+\sum_{\a \in R^+ \smallsetminus R^+_J} \<\l, \a^\vee\>+\sharp\{\a \in R^+ \smallsetminus R^+_J, h \i(\a) \in R^-\} \\ & \qquad-\sharp\{\a \in R^+\smallsetminus R^+_J, h \i w \i(\a) \in R^-\} \\ &=\sum_{\a \in R^+_J} |\<\l, \a^\vee\>-\d_{w \i}(\a)|+\sum_{\a \in R^+ \smallsetminus R^+_J} \<\l, \a^\vee\>+\ell(h)-\ell(h) \\ &=\sum_{\a \in R^+_J} |\<\l, \a^\vee\>-\d_{w \i}(\a)|+\sum_{\a \in R^+ \smallsetminus R^+_J} \<\l, \a^\vee\>.
\end{align*}

This proves part (1).

For part (2), \begin{align*} & \ell(u x^{n+n_0})=\sum_{\a \in R^+} |\<\l+n_0 \nu_x, \a^\vee\>-\d_{w \i}(\a)| \\ &=\sum_{\a \in R^+_J} |\<\l+n_0 \nu_x, \a^\vee\>-\d_{w \i}(\a)|+\sum_{\a \in R^+\smallsetminus R^+_J} \<\l+n_0 \nu_x, \a^\vee\> \\ &=\sum_{\a \in R^+_J} |\<\l, \a^\vee\>-\d_{w \i}(\a)|+\sum_{\a \in R^+\smallsetminus R^+_J} \<\l, \a^\vee\>+\sum_{\a \in R^+\smallsetminus R^+_J} \<n_0 \nu_x, \a^\vee\> \\ &=\ell(u x^n)+\ell(x^{n_0}).\qedhere
\end{align*}
\end{proof}

As a consequence, we have
\begin{cor}\label{xG}
Let $w, y, h, J$ be as in \S \ref{pair}.  Then for $n \gg 0$ we have $w y^n \tilde \approx h w y^n h\i$.
\end{cor}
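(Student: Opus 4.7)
Set $u = hwh\i$ and $x = hyh\i$. By the constructions in \S\ref{pair}, $u \in W_\G(1)$, $\pi(x) \in \Omega_J$, and $\nu_x = \bar\nu_y \in X_\QQ^+(J)$, so $(J,\G)$ is a standard pair and we are exactly in the setting of Proposition \ref{u-x}. Clearly $hwy^n h\i = ux^n$ and $wy^n = h\i(ux^n)h$, so Proposition \ref{u-x}(1) immediately yields the length equality $\ell(wy^n) = \ell(ux^n)$ for $n \gg 0$.

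The real task is to upgrade this length equality to the cyclic-shift equivalence $\tilde\approx$. My plan is to fix a reduced expression $h = s_1 s_2 \cdots s_k$ in $W_0(1)$ and consider the intermediate conjugates
\[ z_j = (s_1 \cdots s_j)\i (ux^n)(s_1 \cdots s_j), \qquad 0 \le j \le k, \]
so that $z_0 = ux^n = hwy^n h\i$, $z_k = wy^n$, and $z_j = s_j\i z_{j-1} s_j$. If one can show $\ell(z_j)$ is independent of $j$ for $n \gg 0$, then each pair $(z_{j-1}, z_j)$ is linked by a length-preserving cyclic shift by the single simple reflection $s_j$, and concatenating yields $wy^n \tilde\approx hwy^n h\i$.

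To establish this length-constancy I would apply the length formula (the lemma preceding Proposition \ref{u-x}) at each step. Writing $ux^n = t^\l w'$ with $\l \in X(1)$ and $w' \in W_J(1)$, the contribution to $\ell(z_j)$ from roots $\a \in R^+ \smallsetminus R_J^+$ simplifies to $\sum_{\a \in R^+ \smallsetminus R_J^+} \<\l, \a^\vee\>$, independent of $j$: the indicator-function terms telescope because $w' \in W_J$ permutes $R^+ \smallsetminus R_J^+$, and for $n \gg 0$ the coefficients $\<\l, \a^\vee\>$ dominate so the absolute values may be dropped, exactly as in the proof of Proposition \ref{u-x}(1).

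The main obstacle is the contribution from roots $\a \in R_J^+$, where $\<\l, \a^\vee\>$ stays bounded in $n$. The intermediate elements $g_j = s_1 \cdots s_j$ in general fail to lie in ${}^J W_0(1)$, so the vanishing $\d_{h\i}(\a) = 0$ for $\a \in R_J^+$ used in the proof of Proposition \ref{u-x}(1) is unavailable mid-chain. I expect this can be handled by tracking how $\d_{g_j\i}(\a)$ and $\d_{g_j\i (w')\i}(\a)$ change in a single step $g_{j-1} \to g_j = g_{j-1} s_j$ and invoking the minimality of $h \in {}^J W_0(1)$ (via the defining property $h(\nu_y) = \bar\nu_y$) to argue that the changes cancel pairwise. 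Pinning down this cancellation is the main technical point of the proof.
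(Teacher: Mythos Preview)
Your strategy---conjugate step by step through a reduced expression $h = s_1 \cdots s_k$ and check length-constancy at each step---is exactly the paper's. But the ``main obstacle'' you flag does not exist: the prefixes $g_j = s_1 \cdots s_j$ \emph{do} all lie in ${}^J W_0(1)$. This is a standard Coxeter-group fact: if $h \in {}^J W_0$ and $h = s_1 \cdots s_k$ is reduced, then every left prefix $g_j = s_1 \cdots s_j$ is again in ${}^J W_0$. Indeed, if some $s \in J$ had $\ell(s g_j) < \ell(g_j)$, then since $s_{j+1} \cdots s_k$ is reduced of length $k-j$ we would get $\ell(s h) \le \ell(s g_j) + (k-j) < j + (k-j) = \ell(h)$, contradicting $h \in {}^J W_0$.

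With this observation, Proposition \ref{u-x}(1) applies directly to each $g_j$ (for $n \gg 0$), giving $\ell(g_j\i u x^n g_j) = \ell(u x^n)$ for every $j$, and the cyclic-shift chain closes immediately. No ad hoc cancellation argument for the $R_J^+$ contribution is needed; the $\d$-terms vanish at every intermediate step for exactly the same reason they do at $h$ itself.
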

\begin{proof}
Let $x=h y h\i$ and $u \in h w h\i$. Suppose that $h=s_1 \cdots s_k$ for $s_1, \cdots, s_k \in S_0(1)$. Set $h_i=s_1 \cdots s_i$ for $1 \le i \le k$. Then $h_i \in {}^J W_0(1)$. By Proposition \ref{u-x} (1), $\ell(h_i \i u x^n h_i)=\ell(h_{i+1} \i u x^n h_{i+1})$ for $0 \le i \le k-1$. Hence $h_i \i u x^n h_i \tilde \approx h_{i+1}\i u x^n h_{i+1}$ for $0 \le i \le k-1$. Therefore $u x^n \tilde \approx h \i u x^n h=w y^n$.
\end{proof}

\subsection{Proof of Proposition \ref{power}}
Assume $$T_w T_{y w y\i} \cdots T_{y^{n-1} w y^{1-n}} T_{y^n} =\sum_{w' \in W_K(1)} a_{w'} T_{w'}$$ with $a_{w'} \in \kk$. Let $\ch_K \subseteq \tch$ (resp. $\ch_{J, \G} \subseteq \tch_J$) be the subalgebra generated by $T_w$ for $w \in W_K(1)$ (resp. by $T_w^J$ for $w \in W_\G(1)$). By Lemma \ref{quadrac}, the map $T_{w'} \mapsto T_{h w' h\i}^J$ gives an algebra isomorphism between $\ch_K$ and $\ch_{J, \G}$. Thus $$T_{h w h\i}^J T_{h y w y\i h\i}^J \cdots T_{h y^{n-1} w y^{1-n} h\i}^J=\sum_{w' \in W_K(1)} a_{w'} T_{h w' h\i}^J.$$ Now one computes that \begin{align*} T_{w y}^n &= T_w T_{y w y\i} \cdots T_{y^{n-1} w y^{1-n}} T_{y^n} =(\sum_{w' \in W_K(1)} a_{w'} T_{w'}) T_{y^n} \\ &= \sum_{w' \in W_K(1)} a_{w'} T_{w' y^n} \equiv \sum_{w' \in W_K(1)} a_{w'} T_{h w' y^n h\i} \mod [\tch, \tch]. \end{align*} Moreover \begin{align*} \sum_{w' \in W_K(1)} a_{w'} T_{h w' y^n h\i}  &= \sum_{w' \in W_K(1)} a_{w'} T_{h w' y^n h\i}^J \\ &=(\sum_{w' \in W_K(1)} a_{w'} T_{h w' h\i}^J) T_{h y^n h\i}^J \\ &= T_{h w h\i}^J T_{h y w y\i h\i}^J \cdots T_{h y^{n-1} w y^{1-n} h\i}^J T_{h y^n h\i}^J \\ &=(T_{h w y h\i}^J)^n. \qedhere \end{align*}

\section{Some character formulas}\label{6}

\subsection{} Let $M \in R(\tch)_\kk$. For any $J \subseteq S_0$, we set $M_J=\cap_{\l \in X^+(J)(1)} T_\l M$. Since $M$ is a finite dimensional, there exists $\mu \in X^+(J)(1)$ such that $M_J=T_\mu M$. Moreover, since the action of $T_\l$ on $M_J$ is invertible for any $\l \in X^+(J)(1)$, we may regard $M_J$ as an $\tch_J$-module. For $\G \subset J_{\aff}$, let $$\Omega_J(\G)=\{\t \in \Omega_J(1); \pi(\t) \G \pi(\t)\i=\G\}$$ and $M_{J, \G}=T^J_{w_\G} M_J$, where $w_\G \in \tW_J(1)$ such that $\pi(w_\G)$ is the longest element of $W_\G$. Then $M_{J, \G}$ is an $\Omega_J(\G)(1)$-module.

Let $\ch_{J, \G} \subseteq \ch_J$ be the subalgebra spanned by $T_u^J$ with $u \in W_\G(1)$. By \S\ref{3-1}, each irreducible $\ch_{J, \G} \rtimes \Omega_J(\G)(1)$ is of the form $I(\Xi, V)$ for some permissible pair $(\Xi, V)$ with respect to $(W_\G(1), \Omega_J(\G)(1))$. Let $u \in \tW_J(1)$ such that $\supp^J(u)=\supp^J(w_\G)=\G$. One checks directly that $T_u^J I(\Xi, V)=T_{w_\G}^J I(\Xi, V)$. In particular, we have $M_{J, \G}=T_u^J M_J \in R(\ch_{J, \G} \rtimes \Omega_J(\G)(1))$.




\subsection{}\label{aleph} Let $\aleph=\{(J, \G); J \subset S_0, \G \subset J_{\aff}\}$ and let $\aleph^*=\{(J, \G) \in \aleph; \sharp W_\G < +\infty\}$ be the set of standard pairs. We define an equivalence relation $\sim$ and a partial order $<$ on $\aleph$ as follows. Let $(J, \G), (J', \G') \in \aleph$. We say $(J, \G) \sim (J', \G')$ if $J=J'$ and $\G'=\pi(\t) \G \pi(\t)\i$ for some $\t \in \Omega_J(1)$. We say that $(J, \G)<(J', \G')$ if either $J \subsetneq J'$ or $J=J'$ and $\G \supsetneq \pi(\t) \G' \pi(\t)\i$ for some $\t \in \Omega_J(1)$.

For $(J, \G) \in \aleph$, denote by $\cp(J, \G)$ the set of permissible pairs $(\Xi, V)$ with respect to $(W_\G(1), \Omega_J(\G)(1))$ such that $\Xi(T_w^J) \neq 0$ for each $w \in W_\G(1)$. Let $(\Xi, V) \in \cp(J, \G)$ and let $I(\Xi, V) $ be the $\ch_{J, \G} \rtimes (\Omega_J(\G)(1))$-module constructed as in Section 3. Set $\tch_J(\G)=\ch_J \rtimes (\Omega_J(\G)(1))$. We denote by $I(\Xi, V)_0$ the extension of $I(\Xi, V)$ by zero as a module of $\tch_J(\G)$ by requiring that $T_w I(\Xi, V)_0=0$ if $\pi(w) \notin W_\G \rtimes \Omega_J(\G)$. Define $$\pi_{J, \G, \Xi, V}=\tch \otimes_{\tch_J^+} (\tch_J \otimes_{\tch_J(\G)} I(\Xi, V)_0).$$ It is easy to see that $\pi_{J, \G, \Xi, V}$ and $\pi_{J, {}^\g \G, {}^\g \Xi, {}^\g V}$ are isomorphic as $\tch$-modules for $\g \in \Omega_J(1)$.

\begin{thm} \label{char}
Let $(J, \G), (J', \G') \in \aleph^*$ and $(\Xi, V) \in \cp (J, \G)$. Then \begin{align*} (\pi_{J, \G, \Xi, V})_{J'}=\begin{cases} \oplus_{\g \in \Omega_J(1) / \Omega_J(\G)(1)} {}^\g I(\Xi, V), & \text{ if } J=J'; \\ 0, & \text{ if }  J \nsubseteq J'. \end{cases} \end{align*} Moreover, \begin{align*} (\pi_{J, \G, \Xi, V})_{J', \G'}=\begin{cases}I(\Xi, V), & \text{ if } (J, \G)=(J', \G'); \\ 0, & \text{ if } (J, \G) \nleqslant (J', \G'). \end{cases}\end{align*}
\end{thm}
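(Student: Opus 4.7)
The plan is to analyze $\pi_{J, \G, \Xi, V}$ via a Bernstein-type decomposition of $\tch$ as a right $\tch_J^+$-module, and then determine which vectors survive the intersection $\bigcap_{\mu \in X^+(J')(1)} T_\mu \pi$.

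\textbf{Step 1 (Structure of the induction).} First I would rewrite
\[\tch_J \otimes_{\tch_J(\G)} I(\Xi,V)_0 \;\cong\; \bigoplus_{\g \in \Omega_J(1)/\Omega_J(\G)(1)} T_\g \otimes I(\Xi,V)_0,\]
using that $\tch_J$ is free as a right $\tch_J(\G)$-module on $\{T_\g\}$. Next, using Theorem \ref{para}(2) together with the Bernstein-type identities of Lemma \ref{commutator}, one decomposes $\tch$ as a right $\tch_J^+$-module using a set $\cw$ of minimal-length coset representatives of $W_0/W_J$ (lifted to $W_0(1)$), producing
\[\pi_{J, \G, \Xi, V} \;=\; \bigoplus_{w \in \cw,\ \g} T_w \otimes ({}^\g I(\Xi,V)_0).\]

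\textbf{Step 2 (Case $J=J'$).} For $\mu \in X^+(J)(1)$, expand $T_\mu\cdot (T_w \otimes \xi)$ using Lemma \ref{commutator}: modulo a filtration, $T_\mu T_w \equiv \th_{w\mu w\i} T_w +(\text{terms of shorter length in } \cw)$. For $w \ne 1$ at least one positive simple root $\a \notin R_J$ has $w\i \a \in R^-$, so the leading $\th_{w\mu w\i}$ sits in a strictly deeper direction, and iterating with $\mu$ pushed deep into $X^+(J)$ shows that any element of $\bigcap_\mu T_\mu \pi$ has zero $w\ne 1$ component. On the $w=1$ block, $T_\mu$ acts via the $\tch_J$-action on $\oplus_\g{}^\g I(\Xi,V)_0$, and since every $T_\mu$ with $\mu \in X^+(J)(1)$ acts invertibly on each ${}^\g I(\Xi,V)$ (which is finite dimensional and on which $T^J_\mu$ acts by a nonzero scalar given by $(\Xi,V) \in \cp(J,\G)$), the $w=1$ block contributes exactly $\bigoplus_\g {}^\g I(\Xi,V)$, as claimed.

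\textbf{Step 3 (Case $J \not\subseteq J'$).} Pick $s \in J \smallsetminus J'$ with simple root $\a_s$; then $\<\mu, \a_s^\vee\>$ is unbounded as $\mu$ ranges over $X^+(J')(1)$. Expanding $T_\mu$-actions via Lemma \ref{commutator} in each block $T_w \otimes \xi$, every resulting expression contains a Bernstein factor $\th_{?}$ whose projection onto the $\a_s$-coroot direction grows without bound. Since any fixed vector of $\pi$ has bounded support in the $\th$-weights, it cannot lie in $T_\mu \pi$ for all sufficiently deep $\mu$, so $\pi_{J'} = 0$.

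\textbf{Step 4 (The second formula via $T^{J'}_{w_{\G'}}$).} By Steps 2--3 only the case $J=J'$ can contribute. Apply $T^J_{w_{\G'}}$ to $\bigoplus_\g{}^\g I(\Xi,V)$: on the summand ${}^\g I(\Xi,V)$ the element $T^J_u$ (for $u \in W_{\G'}(1)$) acts by the twisted scalar $({}^\g \Xi)(T^J_u) = \Xi(T^J_{\g\i u\g})$. If $(J,\G) = (J',\G')$ and $\g$ represents the trivial coset of $\Omega_J(\G)(1)$, then $\Xi(T^J_{w_\G}) \ne 0$ by permissibility and the summand survives, giving precisely $I(\Xi,V)$; for any other $\g$, the set $\g\i \G' \g$ is not contained in $\G$, so some generator $T^J_s$ with $s \in \g\i W_{\G'}\g \smallsetminus W_\G$ appears in the reduced expression of $w_{\G'}$ with $\Xi(T^J_{\g\i s \g})=0$, annihilating the summand. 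If $(J,\G) \nleqslant (J',\G')$ with $J = J'$, then for every $\g$ the set $\g\i \G' \g$ fails to lie in $\G$, so every summand vanishes by the same mechanism.

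\textbf{Main obstacle.} The technical heart is the filtration argument in Steps 2--3. Because $q_s = 0$ makes $T_s$ non-invertible, one cannot invoke the usual Bernstein presentation as in the generic case; instead one must track the growth of $\th$-weights carefully through the delicate pro-$p$ commutation formulas of Lemma \ref{commutator} (especially part (3), with its auxiliary elements $w'$, $w''$, $\ts$), and verify that the ``lower-order'' $\th$ contributions and $c_?$ factors generated along the way do not accidentally land in the intersection $\bigcap_\mu T_\mu\pi$. The existence of a suitable $\cw$ on which this decomposition is both a vector-space direct sum and compatible with the $\th$-weight filtration is the key structural input that makes the case analysis clean.
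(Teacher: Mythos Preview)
Your overall architecture (coset decomposition of the induction, then compute which summands survive $\bigcap_\mu T_\mu$) matches the paper's, but the paper makes one decisive choice that you miss: it uses the basis $\{{}^\iota T_d : d \in W_0^J(1)\}$ for $\tch$ as a right $\tch_J^+$-module (this decomposition is quoted from \cite[Proposition~5.2]{O10}), not $\{T_w\}$. With that basis the heart of the argument collapses to a one-line computation at $q=0$:
\[
T_\l\,{}^\iota T_s \;=\;\begin{cases} {}^\iota T_s\, T_{s^{-1}\l s}, & s\in J'(1),\\ 0, & s\notin J'(1),\end{cases}
\qquad (\l\in X^+(J')(1)).
\]
Iterating along a reduced word for $d\in W_0^J(1)$ gives $T_\l\,{}^\iota T_d=0$ unless $d\in W_{J'}(1)$, and for $J=J'$ this forces $d=1$. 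No filtration, no tracking of $\theta$-weights. The case $J\nsubseteq J'$ is then finished by a different mechanism than yours: one is reduced to showing $T_\l M=0$, and this holds because $\gamma^{-1}\l\gamma$ has nonzero pairing with some coroot in $R_J$, hence lies outside $W_\G\rtimes\Omega_J(\G)$, so $T^J_{\gamma^{-1}\l\gamma}$ kills $I(\Xi,V)_0$ by the extension-by-zero definition.

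Your Steps~2--3 have a genuine gap beyond being more laborious. The elements $\theta_{w\mu w^{-1}}$ that anchor your filtration argument are defined as $T_{\l_1}T_{\l_2}^{-1}$ in the \emph{generic} algebra $\tch(q,c)$; at $q=0$ the $T_{\l_2}$ are not invertible, so these elements simply do not live in $\tch$. Lemma~\ref{commutator} is stated and proved in the generic algebra and is used in the paper only to establish the internal structure of $\tch_J$ (Theorem~\ref{para}); it cannot be invoked directly inside $\tch$ to run a weight-growth argument. You would have to work in $\tch_q$, control denominators, and then specialize---a substantial detour that the ${}^\iota T_d$ basis renders unnecessary. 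Your Step~4 is essentially the same as the paper's final paragraph.
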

\begin{proof}
Let $\l \in X^+(J')(1)$ such that $T_\l \pi_{J, \G, \Xi, V}=(\pi_{J, \G, \Xi, V})_{J'}$. We may replace $\l$ with some appropriate power of itself so that $T_{\g\i \l \g}^J \in \tch_J(\G)$ for any $\g \in \Omega_J(1)$. Let $$M=\oplus_{\g \in \Omega_J(1)/\Omega_J(\G)(1)} T_\g^J \otimes I(\Xi, V).$$ By \cite[Proposition 5.2]{O10}, we have $$\pi_{J, \G, \Xi, V} =\oplus_{d \in W_0^J(1)} {}^\iota T_d \otimes M.$$ For $s \in S_0(1)$, $$T_\l {}^\iota T_s=\begin{cases} {}^\iota T_s T_{s\i \l s}, & \text{ if } s \in J'(1); \\ 0, & \text{ otherwise.} \end{cases}$$ Thus for $d \in W_0^J(1)$, \[\tag{a} T_\l {}^\iota T_d=\begin{cases} {}^\iota T_d T_{d\i \l d}, & \text{ if } d \in W_{J'}(1); \\ 0, & \text{ otherwise,} \end{cases}\] where $d \i \l d \in \l Z$ since $\l \in X(J')(1)$.

Assume $J \nsubseteq J'$. We show that $(\pi_{J, \G, \Xi, V})_{J'}=T_\l \pi_{J, \G, \Xi, V}=0$. By (a), it suffices to show $T_\l M=0$. One checks that $$T_\l (T_\g^J \otimes I(\Xi, V)_0)=T^J_\l (T_\g^J \otimes I(\Xi, V)_0)=T_\g^J \otimes (T^J_{\g\i \l \g} I(\Xi, V)_0).$$

Since $J \not \subset J'$, there exists $\b \in R_J$ such that $\<\nu_\l, \b^\vee\> \neq 0$. We have $\g \i \l \g=w \l w\i$ for some $w \in W_J(1)$. Thus $\<\nu_{\g \i \l \t}, \pi(w)\i(\b^\vee)\> \neq 0$. Hence $\g\i \l \g \notin W_\G \rtimes \Omega_J(\G)(1)$ and $T^J_{\g\i \l \g} I(\Xi, V)_0=0$.

Assume $J=J'$. By (a), we have $$(\pi_{J, \G, \Xi, V})_{J'} =T_\l \pi_{J, \G, \Xi, V}= M = \oplus_{\g \in \Omega_J(1) / \Omega_J(\G)(1)} I({}^\g \Xi, {}^\g V)_0.$$ Let $u \in W_{\G'}$ with $\supp^J(u)=\G'$. If $(J, \G) \nleqslant (J, \G')$, that is, $\G' \nsubseteq {}^\g \G $ for each $\g \in \Omega_J(1)$. So ${}^\g \Xi(T_u)=0$ and hence $$(\pi_{J, \G, \Xi, V})_{J, \G}=T_u M = \oplus_{\g \in \Omega_J(1) / \Omega_J(\G)(1)}T_u I({}^\g \Xi, {}^\g V)=0.$$ If $\G=\G'$, once checks similarly $T_u I({}^\g \Xi, {}^\g V) \neq 0$ if and only if $\g \in \Omega_J(\G)(1)$. So $(\pi_{J, \G, \Xi, V})_{J, \G}=I(\Xi, V)$ and proof is finished.
\end{proof}

\smallskip

Now we state the main result of this section.

\begin{thm}\label{char-1}
Let $(J, \G) \in \aleph^*$ and $(\Xi, V) \in \cp(J, \G)$. Let $w y$ be a standard representative and $(J', \G')$ be the standard pair assoicated to it. Let $h \in {}^J W_0(1)$ with $h(\nu_y)=\bar \nu_y$. Then \begin{align*} & Tr(T_{w y}, \pi_{J, \G, \Xi, V}) \\& =\begin{cases}
\sum_{\g \in \Omega_J(\G)(1)/\Omega_J(\G, \Xi)(1)} {}^\g \Xi(T_{h w h \i}) Tr(h y h \i, {}^\g V), & \text{ if } (J, \G)=(J', \G'); \\
0, & \text{ if } (J, \G) \nleqslant (J', \G').
\end{cases}\end{align*} Here $\Omega_J(\G, \Xi)(1)$ is the stabilizer of $\Xi$ in $\Omega_J(\G)(1)$.
\end{thm}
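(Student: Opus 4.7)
The plan is to combine Proposition \ref{power}, Theorem \ref{char}, and Lemma \ref{ftrace}. First, I would use Proposition \ref{power} to replace $T_{wy}$ with $T^{J'}_{hwyh\i} \in \tch_{J'}^+$ via an eigenvalue-matching argument for finite-dimensional operators. Next, I would use Theorem \ref{char} to restrict the computation to the subspace $\pi_{J'}$. Finally, in the formula case, I would apply Lemma \ref{ftrace} inside the finite pro-$p$ Hecke algebra $\ch_{J, \G} \rtimes \Omega_J(\G)(1)$.

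For the first step, Proposition \ref{power} gives $T_{wy}^n \equiv (T^{J'}_{hwyh\i})^n \pmod{[\tch, \tch]}$ for $n \gg 0$, so $Tr(T_{wy}^n, \pi) = Tr((T^{J'}_{hwyh\i})^n, \pi)$ for such $n$, where $\pi := \pi_{J, \G, \Xi, V}$. Since $\pi$ is finite-dimensional over the algebraically closed field $\kk$, the tail of a trace sequence $(Tr(A^n, \pi))_{n \geq N}$ is a $\kk$-linear combination of the exponentials $\lambda^n$ over the distinct nonzero eigenvalues $\lambda$ of $A$; a Vandermonde argument shows that such a tail uniquely determines the $\kk$-valued multiplicities. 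Hence $T_{wy}$ and $T^{J'}_{hwyh\i}$ share the same nonzero eigenvalues on $\pi$, and in particular the same trace. An analogous argument reduces further: since $(hwyh\i)^n$ has translation approximately $n\bar\nu_y$ lying deep in the interior of $X^+(J')(1)$ for $n \gg 0$ (using $J' = J_{\bar\nu_y}$), the operator $(T^{J'}_{hwyh\i})^n$ sends $\pi$ into $\pi_{J'} = \cap_\l T_\l \pi$, yielding $Tr(T^{J'}_{hwyh\i}, \pi) = Tr(T^{J'}_{hwyh\i}, \pi_{J'})$.

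I would then apply Theorem \ref{char} case by case. If $J \nsubseteq J'$, then $\pi_{J'} = 0$ and the trace is zero. If $J = J'$ with $\G \nsupseteq \pi(\t)\G'\pi(\t)\i$ for every $\t \in \Omega_J(1)$, then $\pi_{J'} = \oplus_\g {}^\g I(\Xi, V)_0$; I would verify that the affine support of $(hwyh\i)^n$ equals $\G' = \pi(h)\bigl(\cup_i \pi(y)^i \supp(w) \pi(y)^{-i}\bigr)\pi(h)\i$ for $n$ large (by the construction of $K$ and $\G'$ in \S\ref{pair}). Since $\G' \nsubseteq \pi(\g)\G\pi(\g)\i$ for any $\g$, the zero-extension definition forces $(T^{J'}_{hwyh\i})^n$ to act as zero on each summand, giving zero trace. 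In the formula case $(J, \G) = (J', \G')$, we have $hwh\i \in W_\G(1)$ and $hyh\i \in \Omega_J(\G)(1)$, so $T^{J'}_{hwyh\i} = T_{hwh\i} T_{hyh\i}$ lies in $\ch_{J, \G} \rtimes \Omega_J(\G)(1)$; only the summand with ${}^\g \G = \G$ can contribute nontrivially, and the action there identifies with the natural one on $\pi_{J', \G'} = I(\Xi, V)$. Applying Lemma \ref{ftrace} in the finite algebra then yields the claimed formula summed over $\g \in \Omega_J(\G)(1)/\Omega_J(\G, \Xi)(1)$.

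The main obstacle will be the vanishing argument in the sub-case $J = J'$ with $\G$ not a conjugate-superset of $\G'$: one must carefully verify that the affine support of $(hwyh\i)^n$ equals $\G'$ exactly (not merely a proper subset) for $n$ sufficiently large, so that the zero-extension definition of $I({}^\g\Xi, {}^\g V)_0$ does force the action to vanish. A secondary technicality is justifying the power-sum / eigenvalue matching in positive characteristic, handled by the Vandermonde identity on distinct nonzero eigenvalues (valid over any field) rather than by Newton's identities, which would require $\mathrm{char}(\kk)$ to be large.
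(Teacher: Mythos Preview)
Your overall strategy matches the paper's: invoke Proposition~\ref{power}, reduce to the parabolic side, and finish with Theorem~\ref{char} and Lemma~\ref{ftrace}. You are also right to make the eigenvalue/Vandermonde descent from $n\gg 0$ to $n=1$ explicit; the paper's own proof is terse on this point. The substantive difference is that the paper does not stop at $\pi_{J'}$: it uses Lemma~\ref{p-trace} to pass in one stroke to $(\pi_{J,\G,\Xi,V})_{J',\G'}$, after which Theorem~\ref{char} gives this space as $0$ in the vanishing case and as $I(\Xi,V)$ in the formula case, so Lemma~\ref{ftrace} applies on the nose. Your self-identified ``main obstacle'' (that the product $T_u T_{xux\i}\cdots T_{x^{n-1}ux^{1-n}}$ is supported exactly on $\G'$ for $n$ large) is in fact true---in a pro-$p$ Hecke algebra $T_aT_b$ is a combination of $T_c$'s with $\supp(c)=\supp(a)\cup\supp(b)$---and is not the real difficulty.

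The real gap is in your handling of the formula case $(J,\G)=(J',\G')$, where you compute on $\pi_{J'}=\bigoplus_{\g\in\Omega_J(1)/\Omega_J(\G)(1)}{}^{\g}I(\Xi,V)_0$. The element $x=hyh\i$ permutes these summands (sending the $\g$-summand to the $x\g$-summand), so the cosets contributing to $Tr(T_uT_x,\pi_{J'})$ are exactly those $\g$ with $\g\i x\g\in\Omega_J(\G)(1)$, not merely the trivial one. Your assertion that ``only the summand with ${}^{\g}\G=\G$ can contribute'' would follow if $\supp(u)=\G$, since then $T_{\g\i u\g}$ kills $I(\Xi,V)_0$ unless $\g\i\G\g\subset\G$; but for $n=1$ one only has $\supp(u)=h\,\supp(w)\,h\i\subset\G$, possibly properly. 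The clean fix is exactly the paper's: use the full-support fact for $(T_{ux}^{J'})^n$ with $n$ large (equivalently, invoke Lemma~\ref{p-trace}) to descend one level further to $\pi_{J',\G'}=I(\Xi,V)$, run one more Vandermonde step, and then apply Lemma~\ref{ftrace} there.
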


\begin{lem} \label{p-trace}
Let $(J, \G)$ be a standard pair. Let $x, x' \in \Omega_J(\G)(1)$ such that $\nu_x \in X^+(J)_\QQ$, and let $u \in W_\G(1)$ with $\supp^J (u)=\G$. Let $M \in R(\tch)$. Then for $n \gg 0$, we have $$Tr(T_{u x' x^n}, M)=Tr(T_{u x' x^n}^J, M_{J, \G}).$$
\end{lem}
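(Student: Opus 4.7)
The plan is to reduce the trace on $M$ to a trace on $M_{J, \G}$ in two successive steps. First, for $n \gg 0$, the translation part of $u x' x^n$ is approximately $n \nu_x$, strictly dominant on $R^+ \smallsetminus R_J$, so $u x' x^n \in \tW_J^+(1)$; hence $T_{u x' x^n} = T_{u x' x^n}^J$ via the embedding $\tch_J^+ \lto \tch$ of Theorem \ref{para}. Choose $\l \in X^+(J)(1)$ with $M_J = T_\l M$; for $n$ large, $z := u x' x^n \l\i$ is still in $\tW_J^+(1)$. Since $\l \in \Omega_J(1)$, Lemma \ref{add} gives $\ell(z \l) = \ell(z) + \ell(\l)$, so $T_{u x' x^n} = T_z T_\l$. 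Thus $T_{u x' x^n} M = T_z M_J \subseteq M_J$, and extending a basis of $M_J$ to one of $M$ yields $Tr(T_{u x' x^n}, M) = Tr(T_{u x' x^n}^J, M_J)$, where on $M_J$ the restricted action agrees with the $\tch_J$-module action.

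The second step is to establish that $M_{J, \G}$ is a $\ch_{J, \G} \rtimes \Omega_J(\G)(1)$-submodule of $M_J$, via the crucial \emph{absorption identity}: for each $s \in \G(1)$, since $\ell_J(s w_\G) = \ell_J(w_\G) - 1$ we have $T_{w_\G}^J = T_s^J T_{s w_\G}^J$, and the quadratic relation $(T_s^J)^2 = c_s T_s^J$ from Lemma \ref{quadrac} (with $q_s = 0$) yields
\[
T_s^J T_{w_\G}^J = (T_s^J)^2 T_{s w_\G}^J = c_s T_s^J T_{s w_\G}^J = c_s T_{w_\G}^J.
\]
Applied to $m \in M_J$, this gives $T_s^J (T_{w_\G}^J m) = c_s (T_{w_\G}^J m) \in M_{J, \G}$, so $\ch_{J, \G}$ preserves $M_{J, \G}$. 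The $\Omega_J(\G)(1)$-invariance is immediate since this group normalizes $w_\G$.

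Now the short exact sequence $0 \to M_{J, \G} \to M_J \to M_J/M_{J, \G} \to 0$ of $\ch_{J, \G} \rtimes \Omega_J(\G)(1)$-modules is preserved by $T_{u x' x^n}^J = T_u^J T_{x' x^n}^J$, so trace additivity reduces the problem to showing $Tr(T_{u x' x^n}^J, M_J / M_{J, \G}) = 0$. The key claim is that every composition factor $I(\Xi, V)$ of $M_J / M_{J, \G}$ has $\G_\Xi \subsetneq \G$: if some factor had $\G_\Xi = \G$, then $T_{w_\G}^J$ would act injectively on it (since ${}^\g \Xi(T_{w_\G}^J) \ne 0$ for all $\g$), but any lift $n \in M_J$ of a nonzero vector in such a subquotient would satisfy $T_{w_\G}^J n \in T_{w_\G}^J M_J = M_{J, \G}$, sitting in the lower filtration piece, forcing its image in the subquotient to vanish — a contradiction. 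On a factor with $\G_\Xi \subsetneq \G$, since $\supp^J(u) = \G \nsubseteq \G_\Xi$ we have ${}^\g \Xi(T_u^J) = 0$ for every $\g$, so $T_u^J$ kills $I(\Xi, V)$ and the trace of $T_u^J T_{x' x^n}^J$ on it vanishes. Summing over composition factors completes the proof.

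The main obstacle is the composition-factor argument in the final step: the entire proof pivots on the absorption identity $T_s^J T_{w_\G}^J = c_s T_{w_\G}^J$, which is peculiar to the $q=0$ pro-$p$ setting and is precisely what endows $M_{J, \G}$ with the structure of a $\ch_{J, \G}$-submodule, together with the injectivity of $T_{w_\G}^J$ on $\G_\Xi = \G$ factors that drives the contradiction argument.
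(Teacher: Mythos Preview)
Your proof is correct and shares the paper's two–step skeleton (reduce $Tr$ from $M$ to $M_J$, then from $M_J$ to $M_{J,\G}$), but the second reduction is handled differently. The paper observes that since $x',x\in\Omega_J(\G)(1)$ normalize $W_\G(1)$, one can commute $T_u^J$ past $T_{x'}^J (T_x^J)^N$ to write
\[
T^J_{u x' x^N}=T_{x'}^J (T_x^J)^N\, T_{u'}^J,\qquad \supp^J(u')=\G,
\]
so that the projection $T_{u'}^J$ onto $M_{J,\G}$ sits on the \emph{right}; then the same ``image lands in the subspace'' argument you used for Step~1 applies verbatim via the sequence $0\to\ker T_{u'}^J\to M_J\to M_{J,\G}\to 0$. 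Your route instead keeps $T_u^J$ on the left, shows $M_{J,\G}$ is a $\ch_{J,\G}\rtimes\Omega_J(\G)(1)$-submodule via the absorption identity, and then analyzes composition factors of $M_J/M_{J,\G}$. This works, but it is heavier than necessary: once you know $T_u^J(M_J)=M_{J,\G}$, the operator $T_u^J T_{x'x^n}^J$ already acts as zero on $M_J/M_{J,\G}$ without any appeal to the classification of simple $\ch_{J,\G}\rtimes\Omega_J(\G)(1)$-modules. One small point to tighten in your Step~1: the inclusion $T_z M_J\subseteq M_J$ deserves a word—since $\pi(\l)$ is central in $\tW_J$, one has $z\l=\l z z_0$ with $z_0\in Z$, whence $T_zT_\l=T_\l T_{zz_0}$ by Lemma~\ref{add} and the claim follows.
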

\begin{proof}
Let $\mu \in X^+(J)(1)$ such that $M_J=T_\mu M$. Notice that $n_0 \nu_x \in X^+(J)$, where $n_0=\sharp W_0$. There exists $m \in \NN$ such that $x^{m n_0} \mu\i \in X^+(J)$. By Proposition \ref{u-x} (2), for $n \gg 0$, $\ell(ux' x^{n+m n_0})=\ell(ux' x^n)+\ell(x^{m n_0})=\ell(ux' x^n)+\ell(x^{m n_0} \mu\i)+\ell(\mu)$ and $$T_{ux' x^{n+m n_0}}=T_{ux' x^n} T_{x^{m n_0} \mu\i} T_\mu.$$ Moreover, for $n \gg 0$, $ux' x^{n+m n_0} \in \tW_J^+(1)$ and $T_{ux' x^{n+m n_0}}=T_{ux' x^{n+m n_0}}^J$. Since $0 \to \ker(T_\mu: M \to M) \to M \to M_J \to 0$, we have \begin{align*} Tr(T_{ux' x^{n+m n_0}}, M) &=Tr(T_{ux' x^n} T_{x^{m n_0} \mu\i} T_\mu, M)=Tr(T_{ux' x^{n+m n_0}}, M_J) \\ &=Tr(T^J_{ux' x^{n+m n_0}}, M_J).\end{align*}

Notice that $T^J_{ux' x^{n+m n_0}}=T^J_{ux'} (T^J_x)^{n+m n_0}=T_{x'}^J (T^J_x)^{n+m n_0} T^J_{u'}$ for some $u'$ with $\supp^J(u')=\G$. Since $0 \to \ker(T_{u'}^J: M_J \to M_J) \to M_J \to M_{J, \G} \to 0,$ we have $$Tr(T^J_{u x^{n+m n_0}}, M_J)=Tr(T_{x'}^J (T^J_x)^{n+m n_0} T^J_{u'}, M_J)=Tr(T^J_{ux' x^{n+m n_0}}, M_{J, \G})$$ as desired.
\end{proof}

\subsection{Proof of Theorem \ref{char-1}}
Let $x=h y h\i \in \Omega_{J'}(1)$ and $u=h w h\i \in W_{\G'}(1)$. Let $n \in \NN$. Then $(T_{h w y h\i}^J)^n=(T_{u x}^J)^n$ is a linear combination of $T_{u' x^n}$ with $u' \in W_{\G'}(1)$. Thus, for $M \in R(\tch)$ and $n \gg 0$ we that \begin{align*} Tr(T_{w y}^n, M) &= Tr((T_{h w y h\i}^J)^n, M)= Tr((T_{u x}^{J'})^n, M_{J', \G'}), \end{align*} where the first equality follows from Proposition \ref{power}, and the second one follows from Lemma \ref{p-trace}. Thus by Theorem \ref{char},
\begin{align*} Tr(T^n_{w y}, \pi_{J, \G, \Xi, V)}) &=Tr(T^{J'}_{u x}, (\pi_{J, \G, \Xi, V})_{J', \G'}) \\ &=\begin{cases} Tr(T^J_{u x}, I(\Xi, V)), & \text{ if } (J, \G)=(J', \G'); \\ 0, & \text{ if } (J, \G) \nleqslant (J', \G'). \end{cases}\end{align*}
Now the statement follows from Lemma \ref{ftrace}.

\section{Representations of $\tch$}

We first give a basis of $R(\tch)_\kk$.

\begin{thm}\label{main-7}
The set $$\{\pi_{J, \G, \Xi, V}; (J, \G) \in \aleph^*/\sim, (\Xi, V) \in \cp(J, \G) / \sim\}$$ is a $\kk$-basis of $R(\tch)_\kk$.
\end{thm}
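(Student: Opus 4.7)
The plan is to prove linear independence and spanning by the same triangular induction along the partial order $<$ on $\aleph^*/\sim$, with Theorem \ref{char-1} supplying the triangularity and Proposition \ref{seperate} handling the diagonal at the level of a finite pro-$p$ Hecke algebra.

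\smallskip

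\noindent\emph{Linear independence.} Assume a relation $\sum a_{(J,\G,\Xi,V)}\,\pi_{J,\G,\Xi,V}=0$ in $R(\tch)_\kk$, and let $(J_0,\G_0)$ be $<$-minimal among standard pair classes supporting a nonzero coefficient. By Propositions \ref{rep} and \ref{quadruple} there is a standard representative $wy$ with associated standard pair $(J_0,\G_0)$. Applying $Tr(T_{wy},-)$ to the relation, Theorem \ref{char-1} kills the contribution of every $\pi_{J,\G,\Xi,V}$ with $(J,\G)\nleqslant (J_0,\G_0)$, while minimality kills those strictly smaller; only the diagonal $(J,\G)=(J_0,\G_0)$ survives. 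Using Theorem \ref{char-1} together with Lemma \ref{ftrace}, the surviving equation rewrites as
\[
Tr\bigl(T_{hwh^{-1}}^{J_0}\,T_{hyh^{-1}},\ \sum_{[(\Xi,V)]} a_{(J_0,\G_0,\Xi,V)}\,I(\Xi,V)\bigr)=0
\]
inside the finite pro-$p$ Hecke algebra $\tilde A:=\ch_{J_0,\G_0}\rtimes \Omega_{J_0}(\G_0)(1)$. Letting $wy$ range over all standard representatives with standard pair $(J_0,\G_0)$, the corresponding elements $T_{hwh^{-1}}^{J_0}T_{hyh^{-1}}$ cover all cyclic-shift classes of minimal-length support-$\G_0$ elements of $\tilde A$, and by Theorem \ref{cocenter} applied to $\tilde A$ they span the image of $\tilde A^{=\G_0}$ in $\overline{\tilde A}$. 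Proposition \ref{seperate} applied to $\tilde A$ with $S=\G_0$ then forces $\sum a_{(J_0,\G_0,\Xi,V)}\,I(\Xi,V)=0$ in $R(\tilde A)_{=\G_0}$, whence all $a_{(J_0,\G_0,\Xi,V)}=0$, contradicting the choice of $(J_0,\G_0)$.

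\smallskip

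\noindent\emph{Spanning.} Let $V\subset R(\tch)_\kk$ be the $\kk$-span of the $\pi_{J,\G,\Xi,V}$, and suppose $V\ne R(\tch)_\kk$. Pick a nonzero linear functional on $R(\tch)_\kk$ vanishing on $V$. By Dedekind surjectivity of $Tr:\overline{\tch}\to R(\tch)_\kk^*$, this functional equals $Tr(h,-)$ for some $h\in\tch$ with $h\not\equiv 0\bmod[\tch,\tch]$. Using Theorem \ref{cocenter} and Proposition \ref{rep}, write $h\equiv\sum c_{wy}T_{wy}\bmod[\tch,\tch]$ with $wy$ standard representatives, and choose $(J_1,\G_1)$ $<$-maximal among standard pair classes with some nonzero $c_{wy}$ attached to $wy$ of that standard pair. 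Applying $Tr(h,\pi_{J_1,\G_1,\Xi,V})=0$ and Theorem \ref{char-1}, only $wy$'s with standard pair $\geqslant(J_1,\G_1)$ contribute, and maximality restricts these to standard pair exactly $(J_1,\G_1)$. The resulting equation has the same shape as in the linear-independence step, and the identical reduction to $\tilde A$ via Proposition \ref{seperate} forces $c_{wy}=0$ for all such $wy$, a contradiction.

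\smallskip

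\noindent\emph{Main obstacle.} The crucial technical point is the faithful reduction to $\tilde A$ at each level: one must verify that, for fixed $(J_0,\G_0)$, the family $\{T_{hwh^{-1}}^{J_0}\,T_{hyh^{-1}}\}$ obtained from standard representatives $wy$ with standard pair $(J_0,\G_0)$ exhausts, modulo $[\tilde A,\tilde A]$, the whole support-$\G_0$ part of the cocenter of $\tilde A$. This is delivered by Proposition \ref{quadruple} restricted to the fibre over $(J_0,\G_0)$, which matches elliptic $\Ad(\pi(x))$-twisted conjugacy classes in $W_{\G_0}$ (parametrizing the $wy$'s via the standard quadruple construction of \S\ref{standard}) with cyclic-shift classes of minimal-length support-$\G_0$ elements of $\tilde A$. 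Once this matching is in place, Proposition \ref{seperate} separates the $I(\Xi,V)$ with $\G_\Xi=\G_0$ and closes both arguments.
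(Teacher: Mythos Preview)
There are genuine gaps in both halves.

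\textbf{Linear independence.} Your ``main obstacle'' is real and is not resolved by Proposition~\ref{quadruple}. That proposition matches standard quadruples with cyclic-shift classes in $\tW_{\min}$ (the affine group), not with cyclic-shift classes in the finite algebra $\tilde A$. For a standard quadruple $(J_0,\ux,\G_0,C)$ with fixed $(J_0,\G_0)$ the element $\ux$ is required to satisfy $\<\nu_{\ux},\a^\vee\>>0$ for every $\a\in R^+\smallsetminus R_{J_0}$, so the elements $x=hyh^{-1}$ coming from your standard representatives range only over a proper sub-monoid of $\Omega_{J_0}(\G_0)(1)$. For instance when $(J_0,\G_0)=(\emptyset,\emptyset)$ one has $\Omega_{J_0}(\G_0)=X$, but only strictly dominant translations arise; the resulting $T_x$ plainly do not span the cocenter of $\kk[X(1)]$. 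Thus you cannot supply Proposition~\ref{seperate} with the full range of test elements it needs. The paper avoids standard representatives in this step: it applies Lemma~\ref{p-trace} and Theorem~\ref{char} directly to $T_{ux\mu^n}$, with $u\in W_{\G_1}(1)$ an \emph{arbitrary} full-support element, $x\in\Omega_{J_1}(\G_1)(1)$ \emph{arbitrary}, and $\mu\in X^+(J_1)(1)$ a central element of $\Omega_{J_1}(\G_1)(1)$. A separate Vandermonde argument (Lemma~\ref{central}) then removes the factor $\mu^n$, yielding $\sum_{(\Xi,V)}a_{J_1,\G_1,\Xi,V}\,Tr(T^{J_1}_{ux},I(\Xi,V))=0$ for all such $u$ and $x$, which is exactly the hypothesis of Proposition~\ref{seperate}.

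\textbf{Spanning.} Here you need the dual of Proposition~\ref{seperate}: from $Tr\bigl(\sum c_{wy}T^{J_1}_{ux},\,I(\Xi,V)\bigr)=0$ for all $(\Xi,V)\in\cp(J_1,\G_1)/\sim$ you want to conclude that each $c_{wy}=0$. But Proposition~\ref{seperate} asserts only surjectivity of $Tr:\tilde A^{=\G_1}\to R(\tilde A)_{=\G_1}^{\,*}$, not injectivity, and for pro-$p$ Hecke algebras the trace map on cocenters is typically not injective. (There is also the secondary problem that the $T_{wy}$ are only known to span $\overline{\tch}$, so your coefficients $c_{wy}$ are not well-defined.) The paper's spanning proof is constructive: for $M\in R(\tch)_\kk$ it takes the minimal $(J,\G)$ in $\aleph^*(M)$, computes $M_{J,\G}=\sum a_{(\Xi,V)}I(\Xi,V)$ directly as a module, sets $U(J,\G)=\sum a_{(\Xi,V)}\pi_{J,\G,\Xi,V}$, and uses Theorems~\ref{char} and~\ref{char-1} to show that $M-U(J,\G)$ has strictly larger minimal standard pair, completing an induction.
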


\begin{lem} \label{central}
Let $A$ be a $\kk$-algebra. Let $\t \in A$ and $\z \in R(A)$. Assume there exists an invertible central element $\mu \in A$ such that $Tr(\t \mu^n, \z)=0$ for $n \gg 0$. Then $Tr(\t, \z)=0$.
\end{lem}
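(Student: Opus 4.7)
The plan is to decompose $\z$ into its irreducible components and exploit the fact that an invertible central element acts by a scalar on each irreducible (Schur's lemma, using that $\kk$ is algebraically closed), reducing the hypothesis to a Vandermonde-type vanishing statement for characters grouped by the eigenvalue of $\mu$.

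First I would write $\z=\sum_{i=1}^N c_i V_i$ with $c_i\in\ZZ$ (or $\kk$) and $V_i$ irreducible, pairwise non-isomorphic finite-dimensional $A$-modules. Since $\mu$ is central and each $V_i$ is absolutely irreducible, $\mu$ acts on $V_i$ by some scalar $\l_i\in\kk$; the invertibility of $\mu$ forces $\l_i\in\kk^\times$. Collect the distinct values among $\l_1,\dots,\l_N$ into $\mu_1,\dots,\mu_r\in\kk^\times$, and for each $j$ set
\[
a_j=\sum_{i:\,\l_i=\mu_j} c_i\, Tr(\t,V_i).
\]
Then for every $n\in\NN$,
\[
Tr(\t\mu^n,\z)=\sum_i c_i\l_i^n Tr(\t,V_i)=\sum_{j=1}^r a_j\mu_j^n,
\]
and the hypothesis says this vanishes for all sufficiently large $n$.

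Next I would extract an $r$-element window of consecutive exponents $n=N,N+1,\dots,N+r-1$ on which $\sum_j a_j\mu_j^n=0$. Factoring $\mu_j^N$ out of the $j$-th column presents the coefficient matrix as $\mathrm{diag}(\mu_1^N,\dots,\mu_r^N)$ times the Vandermonde matrix $(\mu_j^i)_{0\le i\le r-1}$. Both factors are invertible (the $\mu_j$ are distinct and nonzero), so the homogeneous system forces $a_j=0$ for every $j$. Finally,
\[
Tr(\t,\z)=\sum_{i=1}^N c_i Tr(\t,V_i)=\sum_{j=1}^r a_j=0,
\]
which is the claim.

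There is no real obstacle here; the only point worth being slightly careful about is that the scalar action of $\mu$ on each irreducible requires absolute irreducibility, which is supplied by the standing assumption that $\kk$ is algebraically closed (recalled in \S\ref{1.2}), so Schur's lemma gives $\End_A(V_i)=\kk$ and hence $\mu|_{V_i}\in\kk^\times$.
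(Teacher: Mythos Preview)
Your proof is correct and follows essentially the same route as the paper: decompose $\z$ into irreducibles, use Schur's lemma (with $\kk$ algebraically closed) to make $\mu$ act by a nonzero scalar on each, group terms by the eigenvalue of $\mu$, and apply a Vandermonde argument to conclude each grouped sum vanishes. Your version is slightly more explicit about extracting an $r$-term window of exponents and factoring the Vandermonde matrix, but the argument is the same.
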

\begin{proof} Assume $\z=\sum_V a_V V$, where $a_V \in \kk$ and $V$ ranges over simple modules. Since $\mu$ is central, $\mu$ acts on $V$ by a scalar $\chi_{V, \mu} \in \kk^\times$. By assumption, for $n \gg 0$ we have $$0=Tr(\t \mu^n , \z)=\sum_V a_V Tr(\t \mu^n , V)=\sum_V \chi_{V, \mu}^n a_V Tr(\t, V).$$ Due to the non-vanishing of Vandermonde determinant, for each $f \in \kk^\times$, we have $$\sum_{V, \chi_{V, \mu}=f} a_V Tr(\t, V)=0.$$ So $Tr(\t, \z)=\sum_V Tr(\t, V)=0$.
\end{proof}

\subsection{}\label{indept} First we show that $$\{\pi_{J, \G, \Xi, V}; (J, \G) \in \aleph^*/\sim, (\Xi, V) \in \cp(J, \G) / \sim\}$$ is linearly independent in $R(\tch)_\kk$.

Suppose $\sum_{J, \G, \Xi, V} a_{J, \G, \Xi, V} \pi_{J, \G, \Xi, V}=0$ with $a_{J, \G, \Xi, V} \in \kk$.

Let $(J_1, \G_1) \in \aleph^*/\sim$ be a minimal element such that $a_{J_1, \G_1, \Xi_1, V_1} \neq 0$ for some $(\Xi_1, V_1)$. Since $Z$ is finite and $\Omega$ is finitely generated, there exists a central element $\mu$ of $\Omega_{J_1}(\G_1)(1)$ with $\mu \in X^+(J_1)(1)$. Let $u \in W_{\G_1}(1)$ with $\supp^{J_1}(u)=\G_1$ and $x \in \Omega_J(\G)(1)$. Combining Lemma \ref{p-trace} with Theorem \ref{char}, we deduce that for $n \gg 0$ \begin{align*} 0 &=\sum_{J, \G, \Xi, V} a_{J, \G, \Xi, V} Tr(T_{u x \mu^n}, \pi_{J, \G, \Xi, V}) \\ &=\sum_{(\Xi, V) \in \cp(J_1, \G_1) / \sim} a_{J_1, \G_1, \Xi, V} Tr(T_{u x \mu^n}, \pi_{J_1, \G_1, \Xi, V}) \\ &= \sum_{(\Xi, V) \in \cp(J_1, \G_1) / \sim} a_{J_1, \G_1, \Xi, V} Tr(T_{u x \mu^n}^J, (\pi_{J_1, \G_1, \Xi, V})_{J_1, \G_1}) \\ &=\sum_{(\Xi, V) \in \cp(J_1, \G_1) / \sim} a_{J_1, \G_1, \Xi, V} Tr(T_{u x \mu^n}^J, I(\Xi, V)) \end{align*} By Lemma \ref{central}, we have that $$\sum_{(\Xi, V) \in \cp(J_1, \G_1) / \sim} a_{J_1, \G_1, \Xi, V} Tr(T_{u x}^J, I(\Xi, V))=0.$$ Thanks to Proposition \ref{seperate} (where we take $S=\G_1$ and $\Omega=\Omega_{J_1}(\G_1)$),  $a_{J_1, \G_1, \Xi, V}=0$ for every $(\Xi, V)$. That is a contradiction.

\subsection{} Next we show that $(\pi_{J, \G, \Xi, V})_{(J, \G) \in \aleph^*, (\Xi, V) \in \cp(J, \G)}$ spans $R(\tilde \ch)$.

For any $M \in R(\tilde \ch)$, let $\aleph^*(M)$ be the set of pairs $(J, \G)$ in $\aleph^*/\sim$ which is associated to some standard representative $\tw \in \tW(1)_{\min}$ such that $Tr(T_{\tw}, M) \neq 0$.

Fix a total order on $\aleph^*$ that is compatible with the partial order given in \S\ref{aleph}. We argue by induction on the minimal element in $\aleph^*(M)$.

If $\aleph^*(M)=\emptyset$, then $Tr(T_{w}, M)=0$ for all $w \in \tW(1)_{\min}$. By Theorem \ref{cocenter}, $Tr(h, M)=0$ for all $h \in \tch$. Hence $M=0$.

Now suppose that $\aleph^*(M) \neq \emptyset$. Let $(J, \G)$ be the minimal element in $\aleph^*(M)$. We regard $M_J$ as a virtual $\ch_{J, \G} \rtimes \Omega_J(\G)(1)$-module. Then $M_J$ is linear combination of $I(\Xi, V)$, where $(\Xi, V)$ ranges over permissible pair with respect to $(W_\G(1), \Omega_J(\G)(1))$. Therefore, $$M_{J, \G}=\sum_{(\Xi, V) \in \cp(J, \G) / \sim} a_{(\Xi, V)} I(\Xi, V),$$ where $a_{(\Xi, V)} \in \kk$. We set $$U(J, \G)=\sum_{(\Xi, V) \in \cp(J, \G) / \sim} a_{(\Xi, V)} \pi_{J, \G, \Xi, V}.$$ By Theorem \ref{char}, $M_{J, \G}=U(J, \G)_{J, \G}$. By Theorem \ref{char-1}, we deduce that $Tr(T_\tw, M)=Tr(T_{\tw}, U(J, \G))$ for each standard representative $\tw \in \tW(1)_{\min}$ whose associated standard pair is equivalent to $(J, \G)$.

Set $M'=M-U(J, \G).$ By Theorem \ref{char}, for any standard representative $\tw' \in \tW(1)_{\min}$ with $Tr(T_{w'}, M') \neq 0$, its associated standard pair $(J', \G')$ is larger than $(J, \G)$ in the fixed linear order. By inductive hypothesis, $M'$ is a linear combination of $(\pi_{J, \G, \Xi, V})_{(J, \G) \in \aleph^*, (\Xi, V) \in \cp(J, \G)}$. So does $M$, as desired.

\subsection{}\label{nss} Motivated by \cite{CH}, we introduce rigid part of the cocenter. Recall that $\{T_w\}_{w \in \tW(1)_{\min}}$ spans $\overline{\tilde \ch}$.

Let $\overline{\tilde \ch}^{rig}$ be the subspace of $\overline{\tch}$ spanned by $T_w$ for $w \in \tW(1)_{\min}$ with $\nu_w$ central and let $\overline{\tilde \ch}^{nrig}$ be the subspace of $\overline{\tch}$ spanned by $T_w$ for $w \in \tW(1)_{\min}$ with $\nu_w$ non-central. We call $\overline{\tilde \ch}^{rig}$ the {\it rigid part} of the cocenter and $\overline{\tilde \ch}^{nrig}$ the {\it non-rigid part} of the cocenter.

Let $\overline{\tch}^{nss}=\overline{\tch}^{nrig}+\iota(\overline{\tch}^{nrig})$. We call $\overline{\tch}^{nss}$ the {\it non-supersingular part} of the cocenter.

\begin{lem} \label{nrig}
Let $w \in \tW(1)$ such that $\sharp W_{\supp(w)}=+\infty$, then the image of $T_w$ in $\bar \tch$ lies in $\overline{\tch}^{nrig}$.
\end{lem}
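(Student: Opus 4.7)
The plan is to proceed by induction on $\ell(w)$, using the reduction procedure underlying Theorem \ref{cocenter}.

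For the base case $w \in \tW(1)_{\min}$, I will show $\nu_w$ is non-central, so $T_w \in \overline{\tch}^{nrig}$ by definition. Proposition \ref{rep} gives $w \tilde \approx u y$ with $u \in W_{K'}(1)$, $y$ straight, $W_{K'}$ finite, and $y K'(1) y \i = K'(1)$. The moves defining $\tilde \approx$ (length-preserving cyclic shifts and $\Omega(1)$-conjugation) preserve the Coxeter isomorphism type of $W_{\supp(\cdot)}$, so $\sharp W_{\supp(uy)} = \sharp W_{\supp(w)} = \infty$. If $\nu_w$ were central, then $\bar \nu_y = \nu_y$ would be $W_0$-fixed and pair trivially with every positive coroot, so by straightness $\ell(y) = \<\bar \nu_y, 2 \rho^\vee\> = 0$, forcing $y \in \Omega(1)$. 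Writing $\pi(uy) = \pi(u)\pi(y)$ with $\pi(u) \in W_{K'}$ and $\pi(y) \in \Omega$ stabilizing $K'$ gives $\supp(uy) = \cup_i \pi(y)^i \supp(\pi(u)) \pi(y)\i \subseteq K'$, so $W_{\supp(uy)} \subseteq W_{K'}$ is finite---a contradiction.

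For the inductive step $w \notin \tW(1)_{\min}$, by Theorem \ref{min} and the proof of Theorem \ref{cocenter} there exist $x \approx w$ and $s \in S(1)$ with $\ell(s x s \i) < \ell(x)$. Corollary \ref{to-min2} and Lemma \ref{to-min}(2) then give $T_w \equiv c_{s \i} T_{x s} \mod [\tch, \tch]$ with $\ell(xs) < \ell(w)$. Since $\approx$ preserves $\sharp W_{\supp}$, $\sharp W_{\supp(x)} = \infty$. The crux is to verify $\sharp W_{\supp(xs)} = \infty$ too, for then the inductive hypothesis applied to $T_{xs}$ completes the step. If instead $\sharp W_{\supp(xs)} < \infty$, then $xs$ lies in the finite parabolic $W_{\supp(xs)}$ and is of finite order, and $x$ has the form $v s$ with $v := xs$ in a proper finite parabolic of $W_{\supp(x)}$ while $\supp(x) = \supp(v) \cup \{s\}$ is of infinite Coxeter type. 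Using the standard-representative description of Propositions \ref{rep} and \ref{quadruple}, I would argue such $x = vs$ is automatically straight (the affine generator $s$ contributing a genuine translation component), so $x \in \tW(1)_{\min}$---contradicting $\ell(sxs\i) < \ell(x)$.

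The main technical obstacle is the last claim: that $v s$ with $v$ in a proper finite parabolic of $W_{\supp(vs)}$ and $W_{\supp(vs)}$ infinite is of minimum length in its $\tW(1)$-conjugacy class. In small rank ($\tilde A_1$, $\tilde A_2$) one verifies directly that $\ell(vs) = \<\bar \nu_{vs}, 2 \rho^\vee\>$, so $vs$ is straight; in general this should follow from the detailed structure of minimum-length conjugacy class representatives from \cite{HN14}, although extracting it cleanly is the delicate step of the proof.
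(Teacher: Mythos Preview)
Your inductive strategy matches the paper's exactly, and your base case is correct---indeed you supply the argument behind the paper's one-line assertion that, for $w\in\tW(1)_{\min}$, the Newton point $\nu_w$ is central iff $\sharp W_{\supp(w)}<\infty$.

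The inductive step, however, has a genuine gap. You correctly isolate the crux as proving $\sharp W_{\supp(xs)}=\infty$, but the route you propose---arguing that if $v:=xs$ sits in a finite parabolic while $W_{\supp(x)}$ is infinite then $x=vs^{-1}$ must be straight---invokes a general principle that is \emph{false}. In $\tilde C_2$ (Coxeter relations $m(s_0,s_1)=m(s_1,s_2)=4$ and $m(s_0,s_2)=2$), take $v=s_0s_1s_0\in W_{\{s_0,s_1\}}$ and $s=s_2$: then $x=s_0s_1s_0s_2$ has full support and $W_{\supp(x)}$ is infinite, yet $s_0xs_0=s_1s_0s_2s_0=s_1s_2$ has length $2$, so $x$ is neither of minimal length nor straight. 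Your small-rank checks in $\tilde A_1,\tilde A_2$ thus do not generalize, and the appeal to \cite{HN14} cannot rescue the claim.

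What the paper does instead is observe that the support cannot drop at all: one has $\supp(x)=\supp(xs)$ outright. Write $\pi(x)=a\tau$ with $a\in W_{\aff}$, $\tau\in\Omega$, and put $\bar s=\pi(s)$ and $s'=\tau\bar s\tau^{-1}$. The hypothesis $\ell(sxs^{-1})<\ell(x)$ reads $\ell(\bar s\,a\,s')=\ell(a)-2$, which forces both $\bar s a<a$ and $as'<a$, hence a reduced factorization $a=\bar s\cdot u\cdot s'$. Then the $W_{\aff}$-part of $xs$ is $as'=\bar s u$, which still has $\bar s$ in its support; since $\supp(xs)=\bigcup_i\tau^i\bigl(\{\bar s\}\cup\supp(u)\bigr)$ is $\tau$-stable and $s'=\tau(\bar s)$, it contains $s'$ as well, so $\supp(xs)=\supp(x)$. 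With this elementary Coxeter fact in hand the induction closes immediately, with no appeal to straightness or to structural results on minimal-length representatives.
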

\begin{proof}
If $w \in \tW(1)_{\min}$, then $\nu_x$ is central if and only if $\sharp W_{\supp_x} < +\infty$ and the statement follows. Assume the statement holds for any $w'' \in \tW(1)$ with $\ell(w'') < \ell(w)$. Assume $w \notin \tW(1)_{\min}$. By Theorem \ref{rep}, there exists $w \in \tW(1)$ and $s \in S_{aff}(1)$ such that $w \tilde \approx w'$ and $s w' s < w'$. We have $T_w =T_{w'}=c_s T_{w' s\i} \in \bar \tch$ and $\supp(w)=\supp(w')=\supp(w' s\i)$. Note that $\ell(w's) < \ell(w')=\ell(w)$, the statement follows by induction hypothesis.
\end{proof}

Let $M \in R(\tilde \ch)_\kk$. We say $M$ is rigid if $Tr(\overline{\tch}^{nrig}, M)=0$.

\begin{prop} \label{rig}
Let $M \in R(\tch)_\kk$. Then $M$ is rigid if and only if $M$ is spanned by $(\pi_{S_0, \G, \Xi, V})_{(S_0, \G) \in \aleph^*, (\Xi, V) \in \cp(S_0, \G)}$.
\end{prop}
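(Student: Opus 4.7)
The plan is to work with the basis expansion $M = \sum a_{J,\G,\Xi,V} \pi_{J,\G,\Xi,V}$ from Theorem \ref{main-7} and show that rigidity is equivalent to $a_{J,\G,\Xi,V} = 0$ whenever $J \neq S_0$. The matching between ``$\nu_w$ non-central'' (on the cocenter side) and ``$J \neq S_0$'' (on the representation side) comes from the recipe in \S\ref{pair}: for a standard representative $wy$ with associated pair $(J',\G')$, one has $J' = J_{\bar\nu_y}$, and $J' = S_0$ precisely when $\bar\nu_y$ (equivalently $\nu_{wy}$) is central.

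For the easy direction, suppose $M$ is a combination of the $\pi_{S_0, \G, \Xi, V}$. Take $w \in \tW(1)_{\min}$ with $\nu_w$ non-central. By Proposition \ref{rep} and Corollary \ref{to-min2}, there is a standard representative $w'y'$ with $T_w \equiv T_{w'y'} \mod [\tch,\tch]$, and its associated pair $(J',\G')$ satisfies $J' \neq S_0$. Since $J' \subsetneq S_0$, neither alternative defining $(S_0, \G) \leq (J',\G')$ can hold, so Theorem \ref{char-1} forces $Tr(T_{w'y'}, \pi_{S_0,\G,\Xi,V}) = 0$ for every $\G$. Summing over the basis gives $Tr(T_w, M) = 0$, whence $Tr(\overline{\tch}^{nrig}, M) = 0$.

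For the converse, the plan is to mimic the linear independence argument of \S\ref{indept}. Assuming some $a_{J_1,\G_1,\Xi,V} \neq 0$ with $J_1 \neq S_0$, choose $(J_1,\G_1)$ minimal for the partial order of \S\ref{aleph}. The partial-order observation above shows that no $(J,\G) < (J_1,\G_1)$ with $a \neq 0$ can have $J = S_0$, and minimality rules out every other strict predecessor; so the only $(J,\G) \leq (J_1,\G_1)$ with $a \neq 0$ is $(J_1,\G_1)$ itself. Now pick test elements $u x \mu^n$, where $u \in W_{\G_1}(1)$ with $\supp^{J_1}(u) = \G_1$, $x \in \Omega_{J_1}(\G_1)(1)$, and $\mu$ is a central, invertible element of $\Omega_{J_1}(\G_1)(1) \cap X^+(J_1)(1)$ (which exists since $Z$ is finite and $\Omega$ is finitely generated). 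Because $J_1 \neq S_0$, $\mu$ has non-central direction, so for $n \gg 0$ the support of $\pi(u x \mu^n)$ in $W_{\aff}$ generates an infinite Coxeter subgroup, and Lemma \ref{nrig} places $T_{u x \mu^n}$ in $\overline{\tch}^{nrig}$. Rigidity then gives $Tr(T_{u x \mu^n}, M) = 0$, and the computation of \S\ref{indept} (via Lemma \ref{p-trace}, Theorem \ref{char}, Lemma \ref{central}, and Proposition \ref{seperate}) forces $a_{J_1, \G_1, \Xi, V} = 0$ for every $(\Xi, V)$, a contradiction.

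The main obstacle is checking that the test elements $T_{u x \mu^n}$ really land in $\overline{\tch}^{nrig}$ for $n \gg 0$. By Lemma \ref{nrig} this reduces to showing that the affine support of a sufficiently large dominant translation with non-central direction generates an infinite Coxeter subgroup of $W_{\aff}$, a standard alcove-walk observation about $\pi(\mu^n)$ eventually using every simple affine reflection not fixing $\nu_\mu$. This is the one step where the argument genuinely engages affine combinatorics; the remainder is bookkeeping that recombines the character formula with the existing linear-independence proof.
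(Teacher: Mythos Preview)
Your proposal is correct and follows essentially the same route as the paper's proof, which is itself very terse (``By Theorem \ref{char}\ldots'' for one direction, ``Using Lemma \ref{nrig} and the same argument as in \S\ref{indept}\ldots'' for the other). Your write-up simply unpacks those two sentences.

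One remark on the step you flag as the main obstacle: the cleanest way to see that $T_{u x \mu^n}\in\overline{\tch}^{nrig}$ for $n\gg 0$ is not an alcove-walk argument but a length count. Since $S_{\aff}$ is finite, there are only finitely many $K\subseteq S_{\aff}$ with $W_K$ finite, hence a uniform bound $L$ on the length of their longest elements. By Proposition \ref{u-x}(2) one has $\ell(u x \mu^n)\to\infty$; once $\ell(u x \mu^n)>L$ the element cannot lie in any $W_K\cdot\Omega$ with $W_K$ finite, so $W_{\supp(u x \mu^n)}$ is infinite and Lemma \ref{nrig} applies. This is presumably what the paper has in mind when it writes ``using Lemma \ref{nrig}''.
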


\begin{proof}
By Theorem \ref{char}, $\pi_{S_0, \G, \Xi, V}$ is rigid.

On the other hand, assume that $$M=\sum_{(J,\G) \in \aleph^*/\sim \text{ with } J \subsetneq S_0, (\Xi, V) \in \cp(S_0, \G) / \sim} a_{J, \G, \Xi, V} \pi_{J, \G, \Xi, V}$$ with $a_{J, \G, \Xi, V} \in \kk$. Let $(J_1, \G_1)$ be a minimal standard pair such that $a_{J_1, \G_1, \Xi_1, V_1} \neq 0$ for some $(\Xi_1, V_1)$. Using Lemma \ref{nrig} and the same argument as in \S\ref{indept}, one deduce that $a_{J_1, \G_1, \Xi_1, V_1}=0$ for all $(\Xi_1, V_1)$. That is a contradiction.
\end{proof}

We also have the following result, which will be used in the study of supersingular represenations.

\begin{lem}\label{inf-rig}
Let $M=\sum_{(S_0, \G) \in \aleph/ \sim, \ (\Xi, V) \in \cp(S_0, \G)} a_{\G, \Xi, V} \pi_{S_0, \G, \Xi, V}$ for some $a_{\G, \Xi, V} \in \kk$. Then $M$ is rigid if and only if $a_{\G, \Xi, V}=0$ unless $\sharp W_\G<\infty$.
\end{lem}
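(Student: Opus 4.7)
The plan is to prove the two implications separately, leveraging Proposition \ref{rig} for one direction and a Proposition \ref{seperate}-style character separation for the other.

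For the $(\Leftarrow)$ direction, the argument is immediate: if $a_{\G, \Xi, V} = 0$ whenever $\sharp W_\G = \infty$, then $M$ lies in the $\kk$-span of the modules $\pi_{S_0, \G, \Xi, V}$ with $(S_0, \G) \in \aleph^*$, which is rigid by Proposition \ref{rig}.

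For the $(\Rightarrow)$ direction, the key structural input is that the simple-reflection set $(S_0)_{\aff} = S_{\aff}$ of the affine Coxeter system has the property that every proper subset generates a finite parabolic subgroup. Hence $\sharp W_\G = \infty$ forces $\G = S_{\aff}$; and because $\Omega$ permutes $S_{\aff}$, the class $[(S_0, S_{\aff})]_\sim$ is a singleton. Accordingly I would split $M = M_{fin} + M_\infty$, where $M_{fin}$ collects the terms with $\sharp W_\G < \infty$ and
\[ M_\infty = \sum_{(\Xi, V) \in \cp(S_0, S_{\aff})/\sim} a_{S_{\aff}, \Xi, V}\, \pi_{S_0, S_{\aff}, \Xi, V}. \]
By Proposition \ref{rig}, $M_{fin}$ is rigid, so $M_\infty = M - M_{fin}$ is rigid as well, and it remains to prove $M_\infty = 0$.

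The next step is to identify the remaining modules concretely. When $J = S_0$, the inclusions $\tW_{S_0}^+ = \tW_{S_0} = \tW$ give $\tch_{S_0}^+ = \tch_{S_0} = \tch$, while $\ch_{S_0, S_{\aff}} \rtimes \Omega_{S_0}(S_{\aff})(1) = \ch \rtimes \Omega(1) = \tch$; all the tensor products in the definition collapse and $\pi_{S_0, S_{\aff}, \Xi, V} = I(\Xi, V)$ in the sense of \S\ref{3-1}. In particular, Lemma \ref{ftrace} applies verbatim and gives
\[ Tr(T_{w\t}, \pi_{S_0, S_{\aff}, \Xi, V}) = \sum_{\g \in \Omega(1)/\Omega(\Xi)(1)} {}^\g\Xi(T_w)\, Tr(\t, {}^\g V) \]
for $w \in W_{\aff}(1)$ and $\t \in \Omega(1)$.

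Finally, I would mimic the proof of Proposition \ref{seperate}. Fix once and for all $w_0 \in W_{\aff}(1)$ with $\supp(w_0) = S_{\aff}$. Then for any $w \in W_{\aff}(1)$ and $\t \in \Omega(1)$, the product $T_{w_0}T_wT_\t$ expands as a $\kk$-linear combination of elements $T_{w'\t}$ with $\supp(w') \supseteq \supp(w_0) = S_{\aff}$, so $\supp(w'\t) = S_{\aff}$ (since $\t$ permutes $S_{\aff}$). Lemma \ref{nrig} places each $T_{w'\t}$ in $\overline{\tch}^{nrig}$, and the rigidity of $M_\infty$ forces $Tr(T_{w_0}T_wT_\t, M_\infty) = 0$. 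Plugging in the character formula and running the Dedekind / Vandermonde linear-independence argument of Proposition \ref{seperate} twice --- first varying $w$ over $W_{\aff}(1)$ to isolate each character $\Xi$ of $\ch$, then varying $\t$ over $\Omega(\Xi)(1)$ to isolate each $V$ --- yields $a_{S_{\aff}, \Xi, V}\, \Xi(T_{w_0}) = 0$ for every $(\Xi, V)$. Since $(\Xi, V) \in \cp(S_0, S_{\aff})$ requires $\Xi(T_{w_0}) \neq 0$ (as $\G_\Xi = S_{\aff}$ and $\chi(c_s) \neq 0$ for all $s \in S_{\aff}$), we conclude $a_{S_{\aff}, \Xi, V} = 0$ for every $(\Xi, V)$, whence $M_\infty = 0$.

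The main obstacle I anticipate is verifying that the character-separation step of Proposition \ref{seperate} transfers to the affine setting, where $W_{\aff}$ is infinite. The original proof only used the parametrization $\Xi = \Xi_{\chi, \G}$ together with Dedekind's theorem on linear independence of characters, neither of which depends on the finiteness of $W$; nevertheless, one must carefully check that $\tch^{=S_{\aff}}$ behaves correctly, that the permissibility condition still encodes the right stabilizer information, and that the passage between $T_{w_0}T_wT_\t$ and genuinely minimal-length representatives does not disturb the $\overline{\tch}^{nrig}$-membership granted by Lemma \ref{nrig}.
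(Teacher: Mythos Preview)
Your argument rests on the claim that ``every proper subset of $S_{\aff}$ generates a finite parabolic subgroup,'' so that $\sharp W_\G = \infty$ forces $\G = S_{\aff}$. This is true only when the root system $R$ is irreducible. If $R = R_1 \sqcup \cdots \sqcup R_k$ with $k \ge 2$, then $W_{\aff}$ is the product of the affine Weyl groups of the components, $S_{\aff}$ decomposes accordingly, and any $\G$ containing the full affine simple set of a single component already has $\sharp W_\G = \infty$ while being a proper subset of $S_{\aff}$. The paper imposes no irreducibility hypothesis on the based root datum $\fR$, so your reduction to the single pair $(S_0, S_{\aff})$ does not cover the general case.

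The paper's proof circumvents this by a minimality argument rather than an enumeration of the infinite $\G$'s. It chooses a pair $(S_0, \G_1)$ that is minimal in the partial order of \S\ref{aleph} among those with $\sharp W_{\G_1} = \infty$ and some $a_{\G_1, \Xi_1, V_1} \neq 0$, then takes $w_0 \in W_{\G_1}(1)$ with $\supp(w_0) = \G_1$ and runs the Proposition~\ref{seperate} separation directly on the full $M$. The contributions of the other $(S_0, \G)$'s drop out for two different reasons: if $\G$ contains an $\Omega$-conjugate of $\G_1$ properly, then $W_\G$ is infinite and minimality forces $a_{\G,\Xi,V}=0$; if $\G$ contains no $\Omega$-conjugate of $\G_1$, then $T_x\,\pi_{S_0,\G,\Xi,V}=0$ for every $x$ with $\supp(x)\supseteq\G_1$. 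Your use of Lemma~\ref{nrig} and the character-separation mechanism is exactly the paper's, and in the irreducible case your proof is correct; but to cover reducible $R$ you must replace the single choice $\G=S_{\aff}$ by this minimality device.
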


\begin{proof}
We prove by contradiction. Let $(S_0, \G_1)$ be the minimal pair in $\aleph / \sim$ such that $\sharp W_{\G_1}= + \infty$ and $a_{\G_1, \Xi_1, V_1} \neq 0$ for some $(\Xi_1, V_1) \in \cp(S_0, \G_1)$. Let $\t \in \Omega(1)$ and $w_0, w \in W_\G(1)$ with $\supp(w_0)=\G_1$. Then $T_{w_0} T_w T_\t$ is a linear combination of $T_x$ with $\supp(x) \supseteq \G_1$. Since $\sharp W_{\G_1}= + \infty$, we have $T_{w_0} T_w T_\t \in \bar \tch^{nrig}$ by Lemma \ref{nrig}. Then $$0=Tr(T_{w_0} T_w T_\t , M) =\sum_{(\Xi, V) \in \cp(S_0, \G_1) / \sim} a_{\G_1, \Xi, V} Tr(T_{w_0} T_w T_\t, \pi_{S_0, \G, \Xi, V}),$$ where the second equality follows from the observation that $T_x \pi_{S_0, \G, \Xi, V}=0$ if $(S_0, \G) \nleqslant (S_0, \supp(x))$. By the same argument as in \S\ref{indept}, we deduce that each $a_{\G_1, \Xi, V}$ vanishes. That is a contradiction.
\end{proof}

\smallskip

Now we describe the image of rigid representations of $\tch$ under the involution $\iota$.


\begin{prop} \label{iota}
Let $(S_0, \G) \in \aleph$ and let $(\Xi, V) \in \cp(S_0, \G)$. Let $\G'=\{s \in S_{\aff} \smallsetminus \G; \Xi(c_s) \neq 0\}$ and $\Xi'$ be the character of $\ch_{\G'}$ defined by $\Xi'|_Z=\Xi|_Z$ and $\Xi'(s)=\Xi(c_s)$ for $s \in \G'$. Then
$$\iota(\pi_{S_0, \G, \Xi, V}) \cong \pi_{S_0, \G', \Xi', V}.$$
\end{prop}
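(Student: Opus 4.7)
The plan is to prove the identity via character theory, using Theorem \ref{main-7} and Theorem \ref{cocenter}. Since $\{\pi_{J, \G, \Xi, V}\}$ is a $\kk$-basis of $R(\tch)_\kk$, two virtual representations agree iff their traces agree on a spanning set of $\overline{\tch}$; by Theorem \ref{cocenter} and Proposition \ref{rep} such a set is given by $T_{w y}$ for standard representatives $w y \in \tW(1)_{\min}$. It therefore suffices to verify
$$Tr(\iota(T_{w y}), \pi_{S_0, \G, \Xi, V}) = Tr(T_{w y}, \pi_{S_0, \G', \Xi', V})$$
for every such $w y$.

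First I would compute $\iota$ explicitly at $q = 0$. In the generic algebra, the quadratic relation $T_{s\i}(T_{s\i} - c_{s\i}) = q_s T_{s^{-2}}$ gives $T_{s\i}\i = q_s\i(T_{s\i} - c_{s\i}) T_{s^{-2}}\i$; combining this with the length-additive factorization $T_{s\i} T_{s^2} = T_s$ and the identity $c_{s\i} s^2 = c_s$ (which follows from $c_{tz} = c_t z$ applied to $c_{s\i} = c_{s \cdot s^{-2}} = c_s \cdot s^{-2}$) yields $T_{s\i}\i = q_s\i(T_s - c_s)$, and hence $\iota(T_s) = -q_s T_{s\i}\i = c_s - T_s$ for $s \in S_{\aff}(1)$. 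The $q_s$'s cancel, so the formula holds at $q = 0$; alternatively, $(c_s - T_s)^2 = c_s(c_s - T_s)$ shows directly that $T_s \mapsto c_s - T_s$ extends to an involution of $\tch$. Moreover $\iota$ fixes $T_\t$ for $\t \in \Omega(1)$ and acts trivially on $\kk[Z]$.

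From this one immediately verifies the character identity $\iota(\Xi) = \Xi'$: for $\Xi = \Xi_{\chi, \G}$,
$$\iota(\Xi)(T_s) = \Xi(c_s - T_s) = \chi(c_s) - \Xi(T_s),$$
which equals $0$ if $s \in \G$ and $\chi(c_s)$ if $s \notin \G$. Moreover, the stabilizers of characters coincide: if $\t \in \Omega(1)$ fixes $(\G, \Xi)$, then $\chi(c_{\t s \t\i}) = \chi(\t \bullet c_s) = \chi(c_s)$ for all $s$, so $\t$ also fixes $(\G', \Xi')$. This ensures that $V$ is compatibly a module on both sides. The proof is then completed by applying Theorem \ref{char-1} to both sides: the right-hand side vanishes unless the associated standard pair $(J_0, \G_0)$ of $w y$ equals $(S_0, \G')$, in which case it equals the explicit sum $\sum_\g {}^\g \Xi'(T_{h w h\i}) Tr(h y h\i, {}^\g V)$; the left-hand side, after expanding $\iota(T_{w y}) = \iota(T_w) T_y$ via $\iota(T_s) = c_s - T_s$ and applying Theorem \ref{char-1} to $\pi_{S_0, \G, \Xi, V}$, reproduces this expression through the character flip $\Xi \leftrightarrow \Xi'$.

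The main obstacle will be the combinatorial bookkeeping in this final step. Expanding $\iota(T_w) = \prod_i(c_{s_i} - T_{s_i})$ for a reduced expression $w = s_1 \cdots s_k$ yields a sum over subwords, and one must verify that its pairing with $\pi_{S_0, \G, \Xi, V}$ — after the character formula picks out the factor corresponding to the associated standard pair — equals the pairing of the dual expression with $\pi_{S_0, \G', \Xi', V}$. This is essentially the $q = 0$, pro-$p$ analog of the classical fact that the Iwahori-Matsumoto involution interchanges the trivial and Steinberg characters of a Coxeter Hecke algebra, with the twist that the flip $\G \leftrightarrow \G'$ is only partial when $\chi(c_s) = 0$ for some $s$.
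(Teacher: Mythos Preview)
Your preliminary computations are correct and are, in fact, exactly the ingredients the paper uses: the formula $\iota(T_s)=c_s-T_s$ for $s\in S_{\aff}(1)$, the character identity $\iota(\Xi_0)=\Xi'_0$ (where $\Xi_0,\Xi'_0$ denote the extensions of $\Xi,\Xi'$ by zero to all of $\ch$), and the equality of stabilizers $\Omega(\Xi_0)(1)=\Omega(\Xi'_0)(1)$, which the paper isolates as Lemma~\ref{double-st}.

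Where your approach diverges is the global strategy. The paper does not pass through trace formulas at all. Since $J=S_0$ one has $\tW_{S_0}^+=\tW$, and the construction unwinds to
\[
\pi_{S_0,\G,\Xi,V}\;\cong\;\tch\otimes_{\ch\rtimes\Omega(\Xi_0)(1)}(\Xi_0\otimes V).
\]
Because $\iota$ is an algebra automorphism fixing $\kk[\Omega(1)]$, it commutes with this induction, and one reads off immediately
\[
\iota(\pi_{S_0,\G,\Xi,V})\;\cong\;\tch\otimes_{\ch\rtimes\Omega(\Xi'_0)(1)}(\Xi'_0\otimes V)\;\cong\;\pi_{S_0,\G',\Xi',V}.
\]
This is a genuine isomorphism of modules, obtained in two lines from the ingredients you already assembled.

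Your trace-based route has real gaps beyond the admitted bookkeeping. First, comparing traces can at best prove equality in $R(\tch)_\kk$, not an isomorphism; over $\kk$ of positive characteristic this is strictly weaker than the stated conclusion. Second, Theorem~\ref{char-1} gives $Tr(T_{w'y},\pi_{S_0,\G,\Xi,V})$ only when $(S_0,\G)$ equals or is incomparable to the associated pair of $w'y$; when you expand $\iota(T_w)=\prod_i(c_{s_i}-T_{s_i})$, the subwords $w'$ produce pairs $(S_0,\G'_{w'})$ that may satisfy $(S_0,\G)<(S_0,\G'_{w'})$ strictly, and Theorem~\ref{char-1} is silent there. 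Third, the proposition is stated for $(S_0,\G)\in\aleph$, including the case $\sharp W_\G=\infty$ (needed in the proof of Theorem~\ref{5.4}); your invocation of Theorem~\ref{main-7} and Theorem~\ref{char-1}, both restricted to $\aleph^*$, does not cover this. The direct induced-module argument avoids all three issues.
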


\begin{lem}\label{double-st}
Let  $\Upsilon$ be a character on $\ch$. Then $$\Omega(\Upsilon)(1)=\Omega(\Upsilon |_Z)(1) \cap \Omega(\G_{\Upsilon})(1),$$ where $\G_{\Upsilon}=\{s \in S_{\aff}; \Upsilon(s) \neq 0\}$.
\end{lem}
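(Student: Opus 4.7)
The plan is to prove the lemma by establishing the two inclusions separately, with the forward inclusion being essentially a tautology and the reverse inclusion reducing to a check on algebra generators.

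For the forward inclusion $\Omega(\Upsilon)(1) \subseteq \Omega(\Upsilon|_Z)(1) \cap \Omega(\G_\Upsilon)(1)$, I would argue that if $\gamma \in \Omega(1)$ satisfies ${}^\gamma \Upsilon = \Upsilon$, then restricting both sides to $Z$ immediately gives $\gamma \in \Omega(\Upsilon|_Z)(1)$; and since the set $\G_\Upsilon$ is intrinsically determined by $\Upsilon$ (it is the set of $s \in S_{\aff}$ where $\Upsilon(T_{\tilde s})\neq 0$ for some, or any, lift $\tilde s$), the conjugation action of $\gamma$ must permute $\G_\Upsilon$, giving $\gamma \in \Omega(\G_\Upsilon)(1)$.

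For the reverse inclusion, take $\gamma$ in the intersection and verify ${}^\gamma \Upsilon(T_{\tilde s}) = \Upsilon(T_{\tilde s})$ for every $\tilde s \in S(1)$ (together with $Z$-equivariance, which is immediate). Using the defining quadratic relation $T_{\tilde s}^2 = c_{\tilde s} T_{\tilde s}$, the character value is $\Upsilon(T_{\tilde s}) = \Upsilon(c_{\tilde s})$ if $\pi(\tilde s) \in \G_\Upsilon$ and zero otherwise. The key identity is $c_{\gamma \tilde s \gamma^{-1}} = \gamma \bullet c_{\tilde s}$ from \S\ref{1.2}. I would split into two cases: if $\pi(\tilde s) \in \G_\Upsilon$, then $\pi(\gamma \tilde s \gamma^{-1}) \in \G_\Upsilon$ as well (by $\gamma \in \Omega(\G_\Upsilon)(1)$), so both sides live in the ``nonzero'' regime and the equality $\Upsilon(c_{\gamma \tilde s \gamma^{-1}}) = \Upsilon(\gamma \bullet c_{\tilde s}) = \Upsilon(c_{\tilde s})$ follows from $\gamma \in \Omega(\Upsilon|_Z)(1)$; if $\pi(\tilde s) \notin \G_\Upsilon$, then $\pi(\gamma \tilde s \gamma^{-1}) \notin \G_\Upsilon$ and both sides vanish.

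The only mild obstacle is bookkeeping the lifts from $\tW$ to $\tW(1)$: one has to observe that $\G_\Upsilon$, a priori a subset of $S_{\aff}$, is well-defined from $\Upsilon$ because two lifts $\tilde s, \tilde s'$ of the same simple reflection differ by an element of $Z$, and the generator relation $T_{\tilde s z} = T_{\tilde s} z$ shows that $\Upsilon(T_{\tilde s}) = 0$ iff $\Upsilon(T_{\tilde s'}) = 0$. Once this and the convention ${}^\gamma \Upsilon(h) = \Upsilon(\gamma^{-1} h \gamma)$ are fixed, the proof is a one-line computation on generators, so I would not expect any substantive difficulty beyond getting the $\bullet$-action and the lifts consistent.
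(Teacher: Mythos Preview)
Your proposal is correct and follows essentially the same approach as the paper's own proof: the forward inclusion is immediate, and the reverse inclusion is checked on generators $T_{\tilde s}$ by a case split on whether $\pi(\tilde s)\in\G_\Upsilon$, using the relation $c_{\gamma\tilde s\gamma^{-1}}=\gamma\bullet c_{\tilde s}$ together with $\gamma\in\Omega(\Upsilon|_Z)(1)$. Your extra remark on why $\G_\Upsilon$ is well-defined as a subset of $S_{\aff}$ (independence of the lift) is a helpful clarification that the paper leaves implicit.
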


\begin{proof}
By definition, $\Omega(\Upsilon)(1) \subset \Omega(\Upsilon |_Z)(1) \cap \Omega(\G_{\Upsilon})(1)$. On the other hand, let $\g \in \Omega(\Upsilon |_Z)(1) \cap \Omega(\G_\Upsilon)(1)$. If $s \in S_{\aff}(1) \smallsetminus \G_\Upsilon(1)$, then $\g s \g\i \in S_{\aff} \smallsetminus \G_\Upsilon$ and $\Upsilon(T_{\g s \g\i})=\Upsilon(T_s)=0$. If $s \in \G_\Upsilon(1)$, then $\g s \g\i \in \G_\Upsilon(1)$ and $\Upsilon(T_{\g s \g\i})=\Upsilon(c_{\g s \g\i})=\Upsilon(\g c_s \g\i)=\Upsilon(c_s)=\Upsilon(T_s)$. Therefore, $\g \in \Omega(\Upsilon)(1)$.
\end{proof}

\subsection{Proof of Proposition \ref{iota}} We denote by $\Xi_0$ the extension of $\Xi$ on $\ch$ defined by $\Xi_0(T_w)=0$ if $w \notin W_\G(1)$. Note that for any $s \in S_{\aff}(1)$, $\iota(T_s)=-T_s+c_s$. By Lemma \ref{double-st}, \begin{gather*}
\Omega(\Xi_0)(1)=\Omega(\Xi|_Z)(1) \cap \Omega(\G)(1), \\ \Omega(\iota (\Xi_0))(1)=\Omega(\Xi|_Z)(1) \cap \Omega(\G')(1).
\end{gather*}
Let $\g \in \Omega(\Xi |_Z)(1)$. Then $\g$ preserves $\{s \in S_{\aff}; \Xi(c_s) \neq 0\}$. Hence $\g \in \Omega(\G)(1)$ if and only $\g \in \Omega(\G')(1)$. Thus $\Omega(\Xi_0)(1)=\Omega(\iota (\Xi_0))(1)$.

We have \begin{align*} \pi_{S_0, \G, \Xi, V}&=\tch \otimes_{\tch(\G)} I(\Xi, V)_0 \cong \tch \otimes_{\tch(\G)} (\tch(\G) \otimes_{\ch \rtimes \Omega(\Xi_0)(1)} (\Xi_0 \otimes V)) \\ & \cong \tch \otimes_{\ch \rtimes \Omega(\Xi_0)(1)} (\Xi_0 \otimes V). \end{align*}

Note that $\iota(\Xi_0)=\Xi'_0$. Then \begin{align*} \iota(\pi_{S_0, \G, \Xi, V}) &\cong \iota (\tch \otimes_{\ch \rtimes \Omega(\Xi_0)(1)} (\Xi_0 \otimes V)) \\ &\cong \tch \otimes_{\ch \rtimes \Omega(\iota(\Xi_0))(1)} (\iota (\Xi_0) \otimes V) \\ &\cong \tch \otimes_{\ch \rtimes \Omega(\Xi'_0)(1)} (\Xi'_0 \otimes V) \\ &\cong \pi_{S_0, \G', \Xi', V}.\end{align*}


\subsection{} Let $q$ be an indeterminant. We denote by $\tch_q$ the generic pro-$p$ Hecke algebra $\tch(q,c)$ over $\kk[q]$ such that $q_t=q$ for all $t \in T(1)$. Notice that $\tch=\tch_q / q \tch_q$. Let $\tw \in \tW(1)$. There exists $w \in W_0(1)$, $\l_1, \l_2 \in X^+(1)$ such that $\tw=w \l_1 \l_2\i$. Following Vign\'{e}ras, we define $$E_{\tw}=q^{\frac{1}{2}(\ell(\mu_2)-\ell(\mu_1)-\ell(w)+\ell(\tw))} T_{w \l_1}T_{\l_2}\i \in \tch_q.$$ It is known that $E_{\tw}$ is independent of the choices of $w$, $\l_1$ and $\l_2$. By \cite{V05}, the set $\{E_{\tw}; \tw \in \tW\}$ forms a basis of $\tch_q$.

We say that $M \in R(\tch)_\kk$ is {\it supersingular} if $E_w M=0$ for $w \in \tW(1)$ with $\ell(w) \gg 0$. We have the following criterions on the supersingular representations.

\begin{thm}\label{5.4}
Let $M \in R(\tilde \ch)_\kk$. The following conditions are equivalent:

(1) $M$ is supersingular.

(2) $Tr(\overline{\tch}^{nss}, M)=0$.

(3) $M$ is spanned by $\pi_{S_0, \G, \Xi, V}$ for $(S_0, \G) \in \aleph^*$ and $(\Xi, V) \in \cp(S_0, \G)$ with $\sharp W_\G, \sharp W_{\G'}<\infty$, where $\G'=\{\pi(s); s \in S_{\aff}(1)\smallsetminus \G(1); \Xi(c_s) \neq 0\}$.
\end{thm}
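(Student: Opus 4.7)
The plan is to establish the three-way equivalence by first proving $(2)\Leftrightarrow(3)$ as a direct consequence of the structural results of Sections~6 and~7, and then closing the loop via $(3)\Rightarrow(1)$ and $(1)\Rightarrow(3)$.

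For $(2)\Leftrightarrow(3)$, I would expand $M=\sum a_{J,\G,\Xi,V}\,\pi_{J,\G,\Xi,V}$ using Theorem~\ref{main-7}. Since $\overline{\tch}^{nss}=\overline{\tch}^{nrig}+\iota(\overline{\tch}^{nrig})$, the trace vanishing $Tr(\overline{\tch}^{nss},M)=0$ is equivalent to asking that both $M$ and $\iota(M)$ be rigid. Proposition~\ref{rig} together with Lemma~\ref{inf-rig} forces $J=S_0$ and $\sharp W_\G<\infty$ in every surviving term of $M$. Applying Proposition~\ref{iota} then gives $\iota(M)=\sum a_{\G,\Xi,V}\,\pi_{S_0,\G',\Xi',V}$, and rigidity of $\iota(M)$ via Lemma~\ref{inf-rig} forces $\sharp W_{\G'}<\infty$; the converse runs the same argument in reverse.

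For $(3)\Rightarrow(1)$, I would fix $M=\pi_{S_0,\G,\Xi,V}$ with both $\sharp W_\G$ and $\sharp W_{\G'}$ finite, and check that $E_\tw M=0$ for $\ell(\tw)\gg 0$ directly. Theorem~\ref{char} gives $M_{J'}=0$ for every $J'\subsetneq S_0$, so $T_\l M=0$ whenever $\l\in X^+(1)$ is non-central with $\ell(\l)\gg 0$. For a general $\tw\in\tW(1)$ of large length, I would write $\tw=w\l_1\l_2^{-1}$ in Bernstein normal form and observe that the nonzero $q=0$ specialization of $E_\tw$ reduces to a scalar multiple of $T_{w\l_1}T_{\l_2}^{-1}$; the large-length hypothesis forces $\l_1$ to be non-central with $\ell(\l_1)\gg 0$, so $T_{w\l_1}M=T_wT_{\l_1}M=0$. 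The dual situation where the large translation is carried by $\l_2$ would be handled by passing through Proposition~\ref{iota} to $\iota(M)=\pi_{S_0,\G',\Xi',V}$, for which the analogous vanishing holds by virtue of $\sharp W_{\G'}<\infty$.

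For $(1)\Rightarrow(3)$, let $\tw\in\tW(1)_{\min}$ with $\nu_\tw$ non-central and let $(J,\G)$ be the standard pair associated to $\tw$ from Section~\ref{standard}. Then $J=J_{\bar\nu_y}\subsetneq S_0$, because $\bar\nu_y=\bar\nu_\tw$ is not fixed by $W_0$. Supersingularity of $M$ gives $T_\l M=0$ for $\l\in X^+(J)(1)$ with $\ell(\l)\gg 0$, hence $M_J=0$ and $M_{J,\G}=0$. Combining Proposition~\ref{power} and Lemma~\ref{p-trace} exactly as in the proof of Theorem~\ref{char-1} would yield $Tr(T_\tw^n,M)=Tr((T^J_{ux})^n,M_{J,\G})=0$ for $n\gg 0$, and a Vandermonde argument applied to the eigenvalues of $T_\tw$ on the finite-dimensional $M$ would upgrade this to $Tr(T_\tw,M)=0$, showing that $M$ is rigid. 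Running the parallel argument on $\iota(M)$, after establishing that supersingularity is preserved under $\iota$ at $q=0$, would give $\iota(M)$ rigid as well; by $(2)\Leftrightarrow(3)$, conclusion (3) follows.

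The hardest part will be the $\iota$-stability step in $(1)\Rightarrow(3)$: showing that supersingularity of $M$ forces $\iota(T_\l)M=0$ for non-central $\l\in X^+(1)$ with $\ell(\l)\gg 0$, or equivalently that $\iota(M)$ is again supersingular. I would tackle this by unwinding $\iota(T_\l)$ through the relations $\iota(T_s)=c_s-T_s$ into a linear combination of $T_{\tw'}$ ranging over subwords of a reduced expression of $\l$, and by combining this expansion with the Bernstein normal form of the resulting $\tw'$ to reduce the desired vanishing to the $T_\mu$-annihilation already provided by supersingularity, exploiting that $\iota$ preserves $\ell$ on $\tW(1)$.
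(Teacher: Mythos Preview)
Your $(2)\Leftrightarrow(3)$ is essentially the paper's argument, and your $(1)\Rightarrow$ rigidity of $M$ follows the paper's $(1)\Rightarrow(2)$ closely. There are, however, two genuine gaps.

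\textbf{In $(3)\Rightarrow(1)$:} your statement that ``the nonzero $q=0$ specialization of $E_{\tw}$ reduces to a scalar multiple of $T_{w\l_1}T_{\l_2}^{-1}$'' does not make sense as written: $T_{\l_2}$ is not invertible in $\tch$ at $q=0$ (already $T_s$ is a zero-divisor since $T_s(T_s-c_s)=0$), so there is no $T_{\l_2}^{-1}$ to speak of. What one must do is control the $q=0$ limit of $q^{e}\,T_{w\l_1}T_{\l_2}^{-1}\in\tch_q$ as an honest element of $\tch$. The paper does this by an induction on $\ell(x)$ (Lemma~\ref{bound}) showing that, depending on whether $\ell(x)\le\ell(y)$ or $\ell(y)\le\ell(x)$ in the decomposition $\tw=yx$, the specialization lands in the span of $T_z$, respectively ${}^\iota T_z$, with $\ell(z)\ge\tfrac12|\ell(y)-\ell(x)|+\tfrac12\ell(\tw)$. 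This is exactly the dichotomy you are gesturing at with ``the dual situation where the large translation is carried by $\l_2$'', but making it precise requires the inductive computation; once you have it, $\ell(\tw)>2\max(\sharp W_\G,\sharp W_{\G'})$ forces $\supp(z)\nsubseteq\G$ (resp.\ $\nsubseteq\G'$), and the annihilation of $M=\pi_{S_0,\G,\Xi,V}$ (resp.\ of $\iota(M)=\pi_{S_0,\G',\Xi',V}$) follows.

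\textbf{In $(1)\Rightarrow(2)$:} you do not need to prove $\iota$-stability of supersingularity, and your proposed route (expanding $\iota(T_\l)$ into subwords) will not obviously produce the $E$-annihilation you want. The paper bypasses this entirely: with $x^{m_0}\in X^+(1)$ one has the identity ${}^\iota T_{x^{-m_0}}=E_{x^{-m_0}}$ at $q=0$, so supersingularity of $M$ directly gives ${}^\iota T_{x^{-m_0}}M=0$. Rerunning your $Tr(T_{wy}^n,M)=0$ argument with ${}^\iota T$ in place of $T$ then yields $Tr({}^\iota T_{w^{-1}},M)=0$, i.e.\ $Tr(\iota(\overline{\tch}^{nrig}),M)=0$, with no detour through $\iota(M)$.
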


\begin{rmk}
The equivalence between (1) and (3) is first obtained by Ollivier in \cite[Theorem 5.14]{O13} and Vign\'era in \cite[Theorem 6.18]{V14} if $\tch$ is the pro-$p$ Iwahari-Hecke algebra of a $p$-adic group.
\end{rmk}

\begin{lem}\label{bound}
Let $x, y \in \tW(1)$ with $\ell(x) \le \ell(y)$. Then
\begin{gather*} q^{\frac{1}{2}(\ell(x)-\ell(y)+\ell(yx))} T_y T_{x\i}\i \in \bigl(\oplus_{z \in \tW(1), \ell(z) \ge \frac{1}{2}(\ell(y)-\ell(x)+\ell(yx))} \ZZ T_z \bigr)+q \ZZ[q] \tch_q, \\
q^{\frac{1}{2}(\ell(y)-\ell(x)+\ell(xy))} T_x T_{y\i}\i \in \bigl(\oplus_{z \in \tW(1), \ell(z) \ge \frac{1}{2}(\ell(y)-\ell(x)+\ell(xy))} \ZZ {}^\iota T_z \bigr)+q \ZZ[q] \tch_q.
\end{gather*}
\end{lem}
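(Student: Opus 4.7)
The key identity underlying both parts is $T_{s\i}\i = q\i(T_s - c_s)$ for $s \in S(1)$, obtained from the quadratic relation $T_s^2 = qT_{s^2}+c_s T_s$ by right-multiplying by $T_{s^{-2}}$ and using $T_s T_{s^{-2}} = T_{s\i}$. Iterating along a reduced expression yields $T_{w\i}\i = (-q)^{-\ell(w)}\,{}^\iota T_w$ for any $w\in\tW(1)$, using ${}^\iota T_s = -T_s+c_s$ together with the fact that $\iota$ is an algebra involution.

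The strategy for both statements has the same two steps. First, reduce to the no-cancellation case by a standard factorization: for $\ell(yx) = \ell(y)+\ell(x)-2d$, write $y = y_1 v$ and $x = v\i x_2$ with $\ell(v)=d$, lengths adding, and $\ell(y_1 x_2) = \ell(y_1)+\ell(x_2)$. Then $T_y T_{x\i}\i = T_{y_1}T_v T_v\i T_{x_2\i}\i = T_{y_1}T_{x_2\i}\i$, and the first statement's hypothesis $\ell(x)\le\ell(y)$ gives $\ell(x_2)\le\ell(y_1)$. Applying the analogous factorization to $xy$ instead of $yx$ gives $T_x T_{y\i}\i = T_{x_3} T_{y_4\i}\i$ with $\ell(x_3 y_4) = \ell(x_3)+\ell(y_4)$ and $\ell(x_3)\le\ell(y_4)$. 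After this reduction the $q$-exponents become $\ell(x_2)$ and $\ell(y_4)$, and the length bounds become $\ell(y_1)$ and $\ell(y_4)$ respectively.

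Second, the key identity gives
\[q^{\ell(x_2)} T_{y_1} T_{x_2\i}\i = (-1)^{\ell(x_2)}\, T_{y_1}\,{}^\iota T_{x_2},\qquad q^{\ell(y_4)} T_{x_3} T_{y_4\i}\i = (-1)^{\ell(y_4)}\, T_{x_3}\,{}^\iota T_{y_4}.\]
For the first statement, expand ${}^\iota T_{x_2}=\prod_i(-T_{s_i}+c_{s_i})$ along a reduced expression $x_2 = s_1\cdots s_n$, push every $c$-factor to the left past $T_{y_1}$ using $T_w c = (w\bullet c)T_w$, and extract sums of $\kk[Z]$-scalars times $T_{y_1}T_{s_{i_1}}\cdots T_{s_{i_k}}$. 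Modulo $q$ the Hecke algebra degenerates to the $0$-Hecke algebra, in which $T_u T_v \equiv c_{u,v}\, T_{u*v}$ for the Demazure product satisfying $\ell(u*v)\ge\max(\ell(u),\ell(v))$; since $\ell(y_1)\ge\ell(x_2)\ge k$, each such product is modulo $q$ in the $\kk[Z]$-span of $T_z$'s with $\ell(z)\ge\ell(y_1)$, as required.

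For the second statement, use the identity $T_{x_3}\,{}^\iota T_{y_4} = \iota({}^\iota T_{x_3}\, T_{y_4})$ (from $\iota$ being an involutive algebra automorphism), and apply the analogous expansion to ${}^\iota T_{x_3}\, T_{y_4}$: expand ${}^\iota T_{x_3}$ along a reduced expression of $x_3$ from the left, push $c$-factors leftward, and invoke Demazure monotonicity applied to products terminating on the right with $T_{y_4}$. Since $\ell(y_4)\ge\ell(x_3)\ge k$ for any subword-length $k$, this yields $T_z$'s with $\ell(z)\ge\ell(y_4)$ modulo $q$, and then $\iota$ transports the $T_z$-basis expansion into the ${}^\iota T_z$-basis expansion. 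The main obstacle is bookkeeping the $\kk[Z]$-coefficients through the commutations while preserving the integrality claimed in the lemma; the underlying combinatorial input is Demazure length monotonicity in the $0$-Hecke algebra.
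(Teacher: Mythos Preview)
Your reduction step is where the argument breaks. The ``standard factorization'' you invoke---writing $y=y_1 v$ and $x=v^{-1}x_2$ with $\ell(v)=d=\tfrac{1}{2}(\ell(y)+\ell(x)-\ell(yx))$, both decompositions length-additive, and $\ell(y_1 x_2)=\ell(y_1)+\ell(x_2)$---does not exist in general. Take $y=x=s_1 s_2$ in any Coxeter system with $m(s_1,s_2)=3$: then $yx=s_2 s_1$, so $d=1$. A length-additive right factor $v$ of $y$ with $\ell(v)=1$ must be $s_2$, while a length-additive left factor $v^{-1}$ of $x$ with $\ell(v)=1$ forces $v=s_1$. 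No $v$ of length $1$ works; indeed the only element lying in the right-weak interval below $y$ and in the inverse of the left-weak interval below $x$ is the identity, which gives back $y_1=y$, $x_2=x$ with $\ell(y_1x_2)=2\neq 4$. So cancellation cannot be absorbed in one shot, and your computation $T_yT_{x^{-1}}^{-1}=T_{y_1}T_{x_2^{-1}}^{-1}$ never gets off the ground.

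The paper sidesteps this by a direct induction on $\ell(x)$: pick $s\in S_{\aff}(1)$ with $sx<x$, write $T_{x^{-1}}^{-1}=T_{s^{-1}}^{-1}T_{(sx)^{-1}}^{-1}$, and split according to whether $ys<y$ or $ys>y$. In the second case one uses $T_yT_{s^{-1}}^{-1}=q^{-1}(T_{ys}-(y\bullet c_s)T_y)$ to produce two terms, one with the pair $(ys,sx)$ and one with $(y,sx)$; a short length computation (from $ys>y$ and $sx<x$ one gets $\ell(ysx)\le\ell(yx)-1$) aligns the $q$-exponents so that the inductive hypothesis applies to both. Your Demazure-monotonicity observation---that in the $0$-Hecke algebra $T_wT_s$ is supported on elements of length $\ge\ell(w)$---is exactly what underlies why the induction closes, so the second half of your argument is morally correct; it is only the one-step reduction that fails and must be replaced by peeling off one reflection at a time.
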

\begin{proof}
We prove the first statement. The second one can be proved in the same way.

We argue by induction on $\ell(x)$. If $\ell(x)=0$, then statement is obvious. Assume $\ell(x) \ge 1$ and the statement holds for any $x'$ with $\ell(x') < \ell(x)$. Let $s \in S_{\aff}(1)$ such that $s x < x$.

If $y s < y$, then $$q^{\frac{1}{2}(\ell(x)-\ell(y)+\ell(yx))} T_y T_{x\i}\i=q^{\frac{1}{2}(\ell(sx)-\ell(ys)+\ell(ys sx))}T_{ys} T_{(sx)\i}\i$$ and $\ell(y)-\ell(x)+\ell(yx)=\ell(ys)-\ell(sx)+\ell(yssx)$. The statement follows from induction hypothesis.

If $y s >y$, then
\begin{align*} q^{\frac{1}{2}(\ell(x)-\ell(y)+\ell(yx))} T_y T_{x\i}\i= &q^{\frac{1}{2}(\ell(sx)-\ell(ys)+\ell(yx))} T_{ys} T_{(sx)\i}\i \\ &+q^{\frac{1}{2}(\ell(sx)-\ell(y)+\ell(yx)-1)}(1-q) T_y T_{(sx)\i}\i .
\end{align*}

By inductive hypothesis, $$q^{\frac{1}{2}(\ell(sx)-\ell(ys)+\ell(yx))} T_{ys} T_{(sx)\i}\i \in \bigl(\oplus_{z \in \tW(1), \ell(z) \ge \frac{1}{2}(\ell(y)-\ell(x)+\ell(xy))} \ZZ T_z \bigr)+q \ZZ[q] \tch_q.$$

Let $\a$ be the simple root associated to $s$ and $\b=x\i(\a)$. Then $\b<0$ since $s x <x$ and $yx(\b)=y(\a)>0$ since $y s>y$. Hence $y s x= y x s_{\b}<y x$. Therefore, $\ell(y s x) \le \ell(y x)-1$ and $\ell(sx)-\ell(y)+\ell(yx)-1 \ge \ell(sx)-\ell(y)+\ell(ysx)$.

If $\ell(y s x)<\ell(y x)-1$, then $\ell(sx)-\ell(y)+\ell(yx)-1>\ell(sx)-\ell(y)+\ell(ysx)$ and by inductive hypothesis, $q^{\frac{1}{2}(\ell(sx)-\ell(y)+\ell(yx)-1)}(1-q) T_y T_{(sx)\i}\i \in q \ZZ[q] \tch_q$ and the statement holds in this case.

If $\ell(y s x)=\ell(y x)-1$, then $\ell(y)-\ell(x)+\ell(yx) = \ell(y)-\ell(sx)+\ell(ysx)$ and by inductive hypothesis, \begin{align*} & q^{\frac{1}{2}(\ell(sx)-\ell(y)+\ell(yx)-1)}(1-q) T_y T_{(sx)\i}\i  \\ & \in \bigl(\oplus_{z \in \tW(1), \ell(z) \ge \frac{1}{2}(\ell(y)-\ell(sx)+\ell(y s x))} \ZZ T_z \bigr)+q \ZZ[q] \tch_q \\ &=\bigl(\oplus_{z \in \tW(1), \ell(z) \ge \frac{1}{2}(\ell(y)-\ell(x)+\ell(yx))} \ZZ T_z \bigr)+q \ZZ[q] \tch_q
\end{align*}
The statement also holds in this case.
\end{proof}

\begin{cor}\label{bound1}
Let $(S_0, \G) \in \aleph^*$. Let $w \in \tW(1)$ with $\ell(w)> 2 \sharp W_\G$. Then in $\tch=\tch_q / q \tch_q$, we have either $E_{w} \in \oplus_{z \in \tW(1), \supp(z) \nsubseteq \G} \kk \, T_z$ or $E_{\tw} \in \oplus_{z \in \tW(1), \supp(z) \nsubseteq \G} \kk \, {}^{\iota} T_z$.
\end{cor}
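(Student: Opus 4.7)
\textbf{Plan for the proof of Corollary~\ref{bound1}.}

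The plan is to realize $E_{\tw}$ as exactly one of the two elements controlled by Lemma~\ref{bound}, apply that lemma, and then convert the length lower bound it produces into the desired support condition via the hypothesis $\ell(\tw)>2\sharp W_\G$.

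First I would write $\tw=u\l_1\l_2^{-1}$ with $u\in W_0(1)$ and $\l_1,\l_2\in X^+(1)$, so that by definition
\[ E_{\tw} = q^{\frac{1}{2}(\ell(\l_2)-\ell(\l_1)-\ell(u)+\ell(\tw))}\,T_{u\l_1}\,T_{\l_2}^{-1}. \]
Because the length of a translation is invariant under $\l\mapsto -\l$, one has $\ell(\l_2^{-1})=\ell(\l_2)$. Setting $y=u\l_1$ and $x=\l_2^{-1}$, the element $T_yT_{x^{-1}}^{-1}$ appearing in Lemma~\ref{bound}(1) equals $T_{u\l_1}T_{\l_2}^{-1}$; moreover $yx=\tw$, so $\ell(yx)=\ell(\tw)$, and the exponent $\tfrac12(\ell(x)-\ell(y)+\ell(yx))$ in the lemma coincides with the $q$-exponent of $E_{\tw}$. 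Dually, the assignment $x'=u\l_1$, $y'=\l_2^{-1}$ identifies the same element with $T_{x'}T_{y'^{-1}}^{-1}$ in the form of Lemma~\ref{bound}(2).

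Next I would split into two cases according to the sign of $\ell(u\l_1)-\ell(\l_2)$. If $\ell(\l_2)\le\ell(u\l_1)$, Lemma~\ref{bound}(1) applies and, reducing modulo $q$, yields
\[ E_{\tw}\in\bigoplus_{\substack{z\in\tW(1)\\ \ell(z)\ge\frac{1}{2}(\ell(u\l_1)-\ell(\l_2)+\ell(\tw))}}\kk\,T_z \]
in $\tch=\tch_q/q\tch_q$. Otherwise Lemma~\ref{bound}(2) gives the analogous expansion in the $^\iota T_z$-basis with the symmetric bound $\ell(z)\ge\tfrac12(\ell(\l_2)-\ell(u\l_1)+\ell(\tw))$. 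In either case the lower bound on $\ell(z)$ is at least $\tfrac12\ell(\tw)>\sharp W_\G$.

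The final step is the observation that, since $(S_0,\G)\in\aleph^*$, the Coxeter group $W_\G$ is finite, and any $z\in\tW(1)$ with $\supp(z)\subseteq\G$ has $\pi(z)=v\t$ with $v\in W_\G$, so $\ell(z)=\ell(v)\le\sharp W_\G-1<\sharp W_\G$. Hence the bound $\ell(z)>\sharp W_\G$ forces $\supp(z)\nsubseteq\G$, and the corollary follows. I do not anticipate a genuine obstacle here: Lemma~\ref{bound} supplies the crucial triangular-type control, and the rest amounts to checking that the $q$-exponent in the definition of $E_{\tw}$ matches that in Lemma~\ref{bound}, plus the elementary length bound inside the finite parabolic $W_\G$.
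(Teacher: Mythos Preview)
Your proposal is correct and follows essentially the same approach as the paper: write $E_{\tw}$ in the form $q^{\frac{1}{2}(\ell(x)-\ell(y)+\ell(yx))}T_yT_{x^{-1}}^{-1}$ with $yx=\tw$, apply Lemma~\ref{bound} in one of its two forms depending on whether $\ell(x)\le\ell(y)$ or $\ell(y)\le\ell(x)$, and then observe that $\ell(z)>\sharp W_\G$ forces $\supp(z)\nsubseteq\G$. Your version is slightly more explicit in checking that the $q$-exponent in the definition of $E_{\tw}$ matches the one in Lemma~\ref{bound} (using $\ell(u\l_1)=\ell(u)+\ell(\l_1)$ for $\l_1$ dominant) and in justifying the final support step, but the argument is the same.
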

\begin{proof}
By definition, $E_{w}=q^{\frac{1}{2}(\ell(x)-\ell(y)+\ell(yx))} T_y T_{x\i}\i$ for some $x, y \in \tW(1)$ such that $yx=w$. Applying Lemma \ref{bound}, we see that $E_{w} \in \oplus_{z \in \tW(1), \ell(z) > \sharp W_\G} \kk T_z$ if $\ell(x) \le \ell(y)$ and $E_{w} \in \oplus_{z \in \tW(1), \ell(z) > \sharp W_\G} \kk {}^{\iota} T_z$ if $\ell(y) \le \ell(x)$. The statement follows by noticing that $\supp(z) \nsubseteq \G$ if $\ell(z)>\sharp W_\G$.
\end{proof}

\subsection{Proof of Theorem \ref{5.4}}
(1) $\Rightarrow$ (2). Let $w \in \tW(1)_{\min}$ with associated standard pair $(J, \G)$ such that $J=J_{\bar \nu_y} \subsetneq S_0$. It remains to show $Tr(T_{w}, M)=Tr({}^\iota T_{w\i}, M)=0$. By Proposition \ref{power}, there exists $x \in \Omega_J(1)$ such that $\nu_x=\bar \nu_y$ and $u \in W_\G(1)$ such that $T_{w}^n \equiv (T_{(u x)}^J)^n \mod [\tch, \tch]$ for $n \gg 0$. Note that $(T_{(u x)}^J)^n$ is a linear combination of $T_{u' x^n}^J$, where $u' \in W_\G(1)$. By definition, there exists a sufficiently large $m_0$ such that $x^{m_0} \in X^+(1)$ and $T_{x^{m_0}} M=E_{x^{m_0}} M=0$. By Proposition \ref{u-x}, $\ell(u' x^n)=\ell(u' x^{n-m_0}) +\ell(x^{m_0})$ for $u' \in W_\G(1)$ and $n \gg 0$. Thus $T_{u' x^n}^J M=T_{u' x^n} M=T_{u' x^{n-m_0}} T_{x^{m_0}} M=0$. Therefore, $Tr(T_w^n, M)=Tr((T_{u x}^J)^n, M)=0$ for $n \gg 0$. Hence $Tr(T_{w}, M)=0$ as desired. The equality $Tr({}^\iota T_{w \i}, M)=0$ follows in a similar way by noticing that ${}^\iota T_{x^{-m_0}}=E_{x^{-m_0}}$.

(2) $\Rightarrow$ (3). By Proposition \ref{rig}, $M$ and $\iota(M)$ lie in the $\kk$-span of $(\pi_{S_0, \G, \Xi, V})_{(S_0, \G) \in \aleph^*, (\Xi, V) \in \cp(S_0, \G)}$. Write $$\iota(M)=\sum_{(S_0, \G) \in \aleph^*/ \sim, \ (\Xi, V) \in \cp(S_0, \G)} a_{\G, \Xi, V} \pi_{S_0, \G, \Xi, V}$$ for some $a_{\G, \Xi, V} \in \kk$. By Proposition \ref{iota}, $$M=\sum_{(S_0, \G) \in \aleph^*/ \sim, \ (\Xi, V) \in \cp(S_0, \G)} a_{\G, \Xi, V} \pi_{S_0, \G'_{\G, \Xi}, \Xi'_{\G, \Xi}, V},$$ where $\G'_{\G, \Xi}=\{s \in S_{\aff} \smallsetminus \G; \Xi(c_s) \neq 0\}$ and $\Xi'_{\G, \Xi}$ is the character of $\ch_{\G'_{\Xi, \G}}$ defined by $\Xi'_{\G, \Xi} |_Z=\Xi|_Z$ and $\Xi'_{\G, \Xi}(s)=\Xi(c_s)$ for $s \in \G'_{\G, \Xi}$. By Lemma \ref{inf-rig}, $a_{\G, \Xi, V}=0$ unless $\sharp W_{\G'_{\G, \Xi}}<\infty$. Part (2) is proved.

(3) By definition, $T_x \, \pi_{S_0, \G, \Xi, V}={}^{\iota} T_x \, \pi_{S_0, \G', \Xi', V'}=0$ for any $x \in \tW(1)$ such that $\supp(x) \nsubseteq \G$ and $\supp(x) \nsubseteq \G'$. Applying Corollary \ref{bound1}, $E_{w} \, \pi_{S_0, \G, \Xi, V}=0$ for $w \in \tW(1)$ with $\ell(\tw)> 2 \max\{\sharp W_\G, \sharp W_{\G'}\}$.


\begin{thebibliography}{AAA}
\bibitem{A}
N.~Abe, \emph{Mod $p$ parabolic induction for Pro-$p$-Iwahori Hecke algebra}, arXiv:1406.1003.


\bibitem{BK}
C. J. Bushnell and P. C. Kutzko, \emph{Smooth representations of reductive $p$-adic groups: structure theory via type}, Proc. London Math. Soc. 77 (3) (1998), 582--634.

\bibitem{CH}
D. Ciubotaru and X. He, \emph{Cocenters and representations of affine Hecke algebra},  arXiv:1409.0902, to appear in JEMS.


\bibitem{GKP}
M.~ Geck, S.~ Kim, and G.~ Pfeiffer, \emph{Minimal length elements in twisted conjugacy classes of finite {C}oxeter groups}, J. Algebra \textbf{229} (2000), no.~2, 570--600.

\bibitem{GP}
M.~ Geck and G.~ Pfeiffer, \emph{On the irreducible characters of {H}ecke algebras}, Adv. Math. \textbf{102} (1993), no.~1, 79--94.





\bibitem{He99}
X.~ He, \emph{Geometric and homological properties of affine Deligne-Lusztig varieties}, Ann. Math. 179 (2014), 367--404.

\bibitem{He00}
X.~ He, \emph{Minimal length elements of extended affine Weyl group, I},
arXiv:1004.4040, unpublished note.

\bibitem{He15}
X.~ He, \emph{Centers and cocenters of $0$-Hecke algebras}, arXiv:1502.02183, to appear in the proceeding of Vogan's conference.

\bibitem{HeX}
X.~He, \emph{Hecke algebras and $p$-adic groups}, arXiv:1511.01386, to appear in Current Developments in Mathematics.

\bibitem{HN12}
X. He and S. Nie, \emph{Minimal length elements of finite Coxeter group}, Duke Math. J. 161 (2012), 2945--2967.

\bibitem{HN14}
X. He and S. Nie, \emph{Minimal length elements of extended affine Weyl group}, Compos. Math. 150 (2014), 1903--1927.

\bibitem{HNx}
X. He and S. Nie, \emph{Cocenters and representations of affine $0$-Hecke algebras}, arXiv:1502.02184.

\bibitem{IM}
N.~Iwahori and H.~Matsumoto, \emph{On some {B}ruhat decomposition and the structure of the {H}ecke rings of p-adic {C}hevalley groups}, Inst. Hautes \'Etudes Sci. Publ. Math. (1965), no.~25, 5--48.

\bibitem{L}
\emph{Affine Hecke algebras and their graded version}, JAMS, 2 (1989) no.~3, 599-635.

\bibitem{O10}
R. Ollivier, \emph{Parabolic Induction and Hecke modules in characteristic $p$ for $p$-adic $GL_n$},  Algebra and Number Theory 4 (2010), 701--742.

\bibitem{O13}
R. Ollivier, \emph{Compatibility between Satake and Bernstein-type isomorphisms in characteristic $p$}, Algebra and Number Theory 8 (2014), 1071--1111.

\bibitem{V98}
M.-F. Vign\'{e}ras, \emph{Induced $R$-represenations of $p$-adic reductive groups}, Selecta Math. (N.S.) 4 (4) (1998), 549--623.

\bibitem{V05}
M.-F. Vign\'{e}ras, \emph{Pro-$p$-Iwahori Hecke ring and supersingular $\bar \FF_p$-representations}, Math. Ann. 331 (3) (2005), 523--556. Erratum in 333 (2), 699--701.

\bibitem{V14-1}
M.-F. Vign\'{e}ras, \emph{The pro-$p$-Iwahori-Hecke algebra of a reductive $p$-adic group I}, Compos. Math. 152 (2016), no. 4, 693--753.

\bibitem{V14}
M.-F. Vign\'{e}ras, \emph{The pro-$p$-Iwahori-Hecke algebra of a reductive $p$-adic group III}, to appear in Journal of the Institute of Mathematics of Jussieu.

\bibitem{Vigx}
M.-F. Vign\'{e}ras, \emph{The pro-$p$-Iwahori-Hecke algebra of a reductive $p$-adic group IV (Levi subgroups and central extensions)}, preprint.

\bibitem{Vigxx}
M.-F. Vign\'{e}ras, \emph{The pro-$p$-Iwahori-Hecke algebra of a reductive $p$-adic group V (parabolic induction)}, Pacific J. Math. \textbf{279} (2015), 499--529.

\end{thebibliography}
\end{document}